\theoremstyle{plain}\newtheorem{Thm}{Theorem}[section]
\newtheorem{Lem}[Thm]{Lemma}\newtheorem{Cor}[Thm]{Corollary}\newtheorem{Pro}[Thm]{Proposition}
\theoremstyle{definition}
\newtheorem{Def}[Thm]{Definition}\newtheorem{Exm}[Thm]{Example}\newtheorem{Prb}[Thm]{Problem}
\theoremstyle{remark}
\newcommand{\myEmail}{piotr.niemiec@uj.edu.pl}
\newcommand{\myAddress}[1]{\noindent{}\ITE{\equal{#1}{}}{}{Piotr Niemiec\\{}}
   In\-sty\-tut Ma\-te\-ma\-ty\-ki\\{}Wy\-dzia\l{} Ma\-te\-ma\-ty\-ki i In\-for\-ma\-ty\-ki\\{}
   U\-ni\-wer\-sy\-tet Ja\-giel\-lo\'{n}\-ski\\{}ul. \L{}o\-ja\-sie\-wi\-cza 6\\{}
   30-348 Kra\-k\'{o}w\\{}Poland}
\newcommand{\myData}[1][Piotr Niemiec]{\author[P. Niemiec]{Piotr Niemiec}\address{\myAddress{#1}}
   \email{\myEmail}}
\newcommand{\CCC}{\mathbb{C}}\newcommand{\RRR}{\mathbb{R}}\newcommand{\TTT}{\mathbb{T}}
\newcommand{\AAa}{\CMcal{A}}\newcommand{\BBb}{\CMcal{B}}\newcommand{\CCc}{\CMcal{C}}
\newcommand{\GGg}{\CMcal{G}}\newcommand{\HHh}{\CMcal{H}}\newcommand{\IIi}{\CMcal{I}}
\newcommand{\KKk}{\CMcal{K}}\newcommand{\MMm}{\CMcal{M}}\newcommand{\PPp}{\CMcal{P}}
\newcommand{\RRr}{\CMcal{R}}\newcommand{\UUu}{\CMcal{U}}
\newcommand{\EeE}{\EuScript{E}}\newcommand{\FfF}{\EuScript{F}}\newcommand{\IiI}{\EuScript{I}}
\newcommand{\MmM}{\EuScript{M}}
\newcommand{\Bb}{\mathfrak{B}}\newcommand{\Gg}{\mathfrak{G}}\newcommand{\Mm}{\mathfrak{M}}
\newcommand{\Uu}{\mathfrak{U}}\newcommand{\Xx}{\mathfrak{X}}\newcommand{\jJ}{\mathfrak{j}}
\newcommand{\uU}{\mathfrak{u}}\newcommand{\vV}{\mathfrak{v}}
\newcommand{\tTT}{\pmb{T}}\newcommand{\xXx}{\pmb{x}}
\newcommand{\SECT}[1]{\section{#1}\renewcommand{\theequation}{\arabic{section}-\arabic{equation}}
   \setcounter{equation}{0}}
\newcommand{\ITE}[3]{\ifthenelse{#1}{#2}{#3}}\newcommand{\ITEE}[3]{\ITE{\equal{#1}{#2}}{#3}{}}
\newcommand{\tr}{\operatorname{tr}}\newcommand{\scalarr}{\langle\cdot,\mathrm{-}\rangle}
\newcommand{\scalar}[2]{\langle #1,#2\rangle}\newcommand{\leqsl}{\leqslant}
\newcommand{\geqsl}{\geqslant}\newcommand{\epsi}{\varepsilon}\newcommand{\varempty}{\varnothing}
\newcommand{\dd}{\colon}\newcommand{\dint}[1]{\,\textup{d} #1}
\newcommand{\iaoi}{if and only if}\newcommand{\tfcae}{the following conditions are equivalent:}
\newcommand{\DEF}[1]{Definition~\textup{\ref{def:#1}}}
\newcommand{\EXM}[1]{Example~\textup{\ref{exm:#1}}}
\newcommand{\LEM}[1]{Lemma~\textup{\ref{lem:#1}}}
\newcommand{\PRO}[1]{Proposition~\textup{\ref{pro:#1}}}
\newcommand{\THM}[1]{Theorem~\textup{\ref{thm:#1}}}
\newenvironment{dfn}[1]{\begin{Def}\label{def:#1}}{\end{Def}}
\newenvironment{exm}[1]{\begin{Exm}\label{exm:#1}}{\end{Exm}}
\newenvironment{lem}[1]{\begin{Lem}\label{lem:#1}}{\end{Lem}}
\newenvironment{prb}[1]{\begin{Prb}\label{prb:#1}}{\end{Prb}}
\newenvironment{pro}[1]{\begin{Pro}\label{pro:#1}}{\end{Pro}}
\newenvironment{thm}[1]{\begin{Thm}\label{thm:#1}}{\end{Thm}}
\newcommand{\bibITEM}[2]{\ITE{\equal{#2}{}}{\bibitem{#1} }{\bibitem[#2]{#1} }}
\newcommand{\BIB}[8]{
   \bibITEM{#1}{#8} #2, \textit{#3}, #4{} \textbf{#5} (#6), #7.}
\newcommand{\myBIB}[7][P. Niemiec]{\ITE{\equal{#7}{*;}\or\equal{#7}{*+}}{}{#1, \textit{#2}, }
   #3{}\ITE{\equal{#4}{}}{}{ \textbf{#4}} (#5), #6\ITE{\equal{#7}{*;}\or\equal{#7}{*+}}{}{.}}
\newcommand{\BIb}[6]{
   \bibITEM{#1}{#6} #2, \textit{#3}, #4, #5.}
\newcommand{\BiB}[9]{
   \bibITEM{#1}{#9} #2, \textit{#3}, #4{} \textit{#5}, #6, #7, #8.}
\newcommand{\jRN}[2][]{
   \ITEE{#2}{ActaM}{\ITE{\equal{#1}{+}}
      {Acta Mathematica}{Acta Math.}}
   \ITEE{#2}{ACS}{\ITE{\equal{#1}{+}}
      {Applied Categorical Structures}{Appl. Categ. Structures}}
   \ITEE{#2}{AnnM}{\ITE{\equal{#1}{+}}
      {Annals of Mathematics}{Ann. Math.}}
   \ITEE{#2}{DissM}{\ITE{\equal{#1}{+}}
      {Dissertationes Mathematicae (Roz\-pra\-wy Ma\-te\-ma\-tycz\-ne)}
      {Dissertationes Math. (Roz\-pra\-wy Mat.)}}
   \ITEE{#2}{ExtrM}{\ITE{\equal{#1}{+}}
      {Extracta Mathematicae}{Extracta Math.}}
   \ITEE{#2}{IllinoisJM}{\ITE{\equal{#1}{+}}
      {Illinois Journal of Mathematics}{Illinois J. Math.}}
   \ITEE{#2}{IndianaUMJ}{\ITE{\equal{#1}{+}}
      {Indiana University Mathematical Journal}{Indiana Univ. Math. J.}}
   \ITEE{#2}{InvM}{\ITE{\equal{#1}{+}}
      {Inventiones Mathematicae}{Invent. Math.}}
   \ITEE{#2}{JAT}{\ITE{\equal{#1}{+}}
      {Journal of Approximation Theory}{J. Approx. Theory}}
   \ITEE{#2}{JKoreanMS}{\ITE{\equal{#1}{+}}
      {Journal of the Korean Mathematical Society}{J. Korean Math. Soc.}}
   \ITEE{#2}{MMag}{\ITE{\equal{#1}{+}}
      {Mathematics Magazine}{Math. Mag.}}
   \ITEE{#2}{MAMS}{\ITE{\equal{#1}{+}}
      {Memoirs of the American Mathematical Society}{Mem. Amer. Math. Soc.}}
   \ITEE{#2}{MZ}{\ITE{\equal{#1}{+}}
      {Math. Z.}{Math. Z.}}
   \ITEE{#2}{NAMS}{\ITE{\equal{#1}{+}}
      {Notices of the American Mathematical Society}{Notices Amer. Math. Soc.}}
   \ITEE{#2}{PacJM}{\ITE{\equal{#1}{+}}
      {Pacific Journal of Mathematics}{Pacific J. Math.}}
   \ITEE{#2}{PAMS}{\ITE{\equal{#1}{+}}
      {Proceedings of the American Mathematical Society}{Proc. Amer. Math. Soc.}}
   \ITEE{#2}{TAMS}{\ITE{\equal{#1}{+}}
      {Transactions of the American Mathematical Society}{Trans. Amer. Math. Soc.}}
   }
\newcommand{\paplist}[3][]{
   \ITEE{#3}{RBhatia1997}{
      \BIb{#2}{R. Bhatia}
         {Matrix Analysis}
         {Springer, New York}{1997}{#1}}
   \ITEE{#3}{EBishop1961}{
      \BIB{#2}{E. Bishop}
         {A generalization of the Stone\hyp{}Weierstrass theorem}
         {\jRN{PacJM}}{11}{1961}{777--783}{#1}}
   \ITEE{#3}{BBlackadar2006}{\BIb{#2}{B. Blackadar}{Operator Algebras. 
         Theory of $C^*$\hyp{}algebras and von Neumann algebras \textup{(Encyclopaedia 
         of Mathematical Sciences, vol. 122: Operator Algebras and Non\hyp{}Commutative Geometry 
         III)}}{Springer\hyp{}Verlag, Berlin\hyp{}Heidelberg}{2006}{#1}}
   \ITEE{#3}{WMChing1974}{
      \BIB{#2}{W.-M. Ching}
         {Topologies on the quasi-spectrum of a $C^*$\hyp{}algebra}
         {\jRN{PAMS}}{46}{1974}{273--276}{#1}}
   \ITEE{#3}{JDixmier1977}{
      \BIb{#2}{J. Dixmier}
         {$C^*$\hyp{}algebras}
         {North-Holland Publ. Co., Amsterdam}{1977}{#1}}
   \ITEE{#3}{JErnest1976}{
      \BIB{#2}{J. Ernest}
         {Charting the operator terrain}
         {\jRN{MAMS}}{171}{1976}{207 pp}{#1}}
   \ITEE{#3}{JMGFell1960a}{
      \BIB{#2}{J.M.G. Fell}
         {$C^*$-algebras with smooth dual}
         {\jRN{IllinoisJM}}{4}{1960}{221--230}{#1}}
   \ITEE{#3}{JMGFell1960b}{
      \BIB{#2}{J.M.G. Fell}
         {The dual spaces of $C^*$-algebras}
         {\jRN{TAMS}}{94}{1960}{365--403}{#1}}
   \ITEE{#3}{JMGFell1961}{
      \BIB{#2}{J.M.G. Fell}
         {The structure of algebras of operator fields}
         {\jRN{ActaM}}{106}{1961}{233--280}{#1}}
   \ITEE{#3}{MIGarrido,FMontalvo1991}{
      \BIB{#2}{M.I. Garrido and F. Montalvo}
         {On some generalizations of the Kakutani\hyp{}Stone and Stone\hyp{}Weierstrass theorems}
         {\jRN{ExtrM}}{6}{1991}{156--159}{#1}}
   \ITEE{#3}{JGlimm1960}{
      \BIB{#2}{J. Glimm}
         {A Stone\hyp{}Weierstrass theorem for $C^*$\hyp{}algebras}
         {\jRN{AnnM}}{72}{1960}{216--244}{#1}}
   \ITEE{#3}{DWHadwin1976}{
      \BIB{#2}{D.W. Hadwin}
         {An operator\hyp{}valued spectrum}
         {\jRN{NAMS}}{23}{1976}{A-163}{#1}}
   \ITEE{#3}{DWHadwin1977}{
      \BIB{#2}{D.W. Hadwin}
         {An operator\hyp{}valued spectrum}
         {\jRN{IndianaUMJ}}{26}{1977}{329--340}{#1}}
   \ITEE{#3}{DHofmann2002}{
      \BIB{#2}{D. Hofmann}
         {On a generalization of the Stone\hyp{}Weierstrass theorem}
         {\jRN{ACS}}{10}{2002}{569--592}{#1}}
  \ITEE{#3}{JSKim,ChRKim,SGLee1980}{
      \BIB{#2}{J.S. Kim, Ch.R. Kim, S.G. Lee}
         {Reducing operator valued spectra of a Hilbert space operator}
         {\jRN{JKoreanMS}}{17}{1980}{123--129}{#1}}
   \ITEE{#3}{SGLee1980}{
      \BIB{#2}{S.G. Lee}
         {Remarks on reducing operator valued spectrum}
         {\jRN{JKoreanMS}}{16}{1980}{131--136}{#1}}
   \ITEE{#3}{RLongo1984}{
      \BIB{#2}{R. Longo}
         {Solution of the factorial Stone-Weierstrass conjecture. An application of the theory 
         of standard split $W^*$-inclusions}
         {\jRN{InvM}}{76}{1984}{145--155}{#1}}
   \ITEE{#3}{KLowner1934}{
      \BIB{#2}{K. L\"{o}wner}
         {\"{U}ber monotone Matrixfunctionen}
         {\jRN{MZ}}{38}{1934}{177--216}{#1}}
   \ITEE{#3}{pn2012b}{\bibITEM{#2}{#1} \mypaplist{pn19}}
   \ITEE{#3}{CPearcy,NSalinas1974}{
      \BIB{#2}{C. Pearcy and N. Salinas}
         {Finite-dimensional representations of separable $C^*$-algebras}
         {\jRN{NAMS}}{21}{1974}{A-376}{#1}}
   \ITEE{#3}{SPopa1984}{
      \BIB{#2}{S. Popa}
         {Semiregular maximal abelian $*$-subalgebras and the solution to the factor state 
         Stone-Weierstrass problem}
         {\jRN{InvM}}{76}{1984}{157--161}{#1}}
   \ITEE{#3}{TSaito1972}{
      \BiB{#2}{T. Sait\^{o}}{Generations of von Neumann algebras}
         {Lecture Notes in Math. vol. 247}{\textup{(}Lecture on Operator Algebras\textup{)}}
         {Springer, Berlin\hyp{}Heidelberg\hyp{}New York}{1972}{435--531}{#1}}
   \ITEE{#3}{SSakai1971}{
      \BIb{#2}{S. Sakai}
         {$C^*$\hyp{}Algebras and $W^*$\hyp{}Algebras}
         {Springer\hyp{}Verlag, Berlin\hyp{}Heidelberg\hyp{}New York}{1971}{#1}}
   \ITEE{#3}{MHStone1937}{
      \BIB{#2}{M.H. Stone}
         {Application of the theory of Boolean rings to general topology}
         {\jRN{TAMS}}{41}{1937}{375--481}{#1}}
   \ITEE{#3}{MHStone1948}{
      \BIB{#2}{M.H. Stone}
         {The generalized Weierstrass approximation theorem}
         {\jRN{MMag}}{21}{1948}{167--184}{#1}}
   \ITEE{#3}{MTakesaki2002}{
      \BIb{#2}{M. Takesaki}{Theory 
         of Operator Algebras I \textup{(Encyclopaedia of Mathematical Sciences, Volume 124)}}
         {Springer\hyp{}Verlag, Berlin\hyp{}Heidelberg\hyp{}New York}{2002}{#1}}
   \ITEE{#3}{VTimofte2005}{
      \BIB{#2}{V. Timofte}
         {Stone\hyp{}Weierstrass theorems revisited}
         {\jRN{JAT}}{136}{2005}{45--59}{#1}}
   }
\newcommand{\mypaplist}[2][]{
   \ITEE{#2}{pn19}{
      \myBIB{Unitary equivalence and decompositions of finite systems of closed densely defined 
         operators in Hilbert spaces}{\jRN{DissM}}{482}{2012}{1--106}{#1}}
   }
\begin{document}

\title{Elementary approach to homogeneous $C^*$-algebras}
\myData\thanks{The author gratefully acknowledges the assistance of the Polish Ministry 
   of Sciences and Higher Education grant NN201~546438 for the years 2010--2013.}
\begin{abstract}
A $C^*$-algebra is \textit{$n$-homogeneous} (where $n$ is finite) if every its nonzero irreducible
representation acts on an $n$-dimensional Hilbert space. An elementary proof of Fell's
characterization of $n$-homogeneous $C^*$-algebras (by means of their spectra) is presented.
A spectral theorem and a functional calculus for finite systems of elements which generate
$n$-homogeneous $C^*$-algebras are proposed.
\end{abstract}
\subjclass[2010]{Primary 46L05; Secondary 46L35.}
\keywords{Homogeneous $C^*$-algebra; spectrum of a $C^*$-algebra; Stone\hyp{}Weierstrass theorem;
   commutative Gelfand-Naimark theorem; spectral theorem; spectral measure; functional calculus.}
\maketitle

\SECT{Introduction}

In 1961 Fell \cite{fe3} gave a characterization of $n$-homogeneous $C^*$-algebras in terms of fibre
bundles. It is a natural generalization of the commutative Gelfand-Naimark theorem, which gives
models for commutative $C^*$-algebras. However, Fell's proof involves the machinery of (general)
operator fields and as such is more advanced then Gelfand's theory of commutative Banach algebras.
In this paper we propose a new proof of his theorem (starting from the very beginning), which is
elementary and resembles the standard proof of the commutative Gelfand-Naimark theorem. We avoid
the abstract language of fibre bundles---instead of them we introduce $n$-spaces, which are
counterparts of locally compact Hausdorff spaces in the commutative case. These are locally compact
Hausdorff spaces endowed with a (continuous) free action of the group $\Uu_n = \UUu_n / Z(\UUu_n)$
where $\UUu_n$ is the unitary group of $n \times n$-matrices and $Z(\UUu_n)$ is its center.\par
Our approach to the subject mentioned above enables us to generalize the spectral theorem (for
a normal Hilbert space operator) to the context of finite systems generating homogeneous
$C^*$-algebras. It also allows building so-called $n$-functional calculus for such systems. These
and related topics are discussed in the present paper.\par
The paper is organized as follows. Section~2 is devoted to operator-valued version
of the Stone-Weierstrass theorem, which plays an important role in our proof of Fell's theorem
on homogeneous $C^*$-algebras (presented is Section~5). In Section~3 we define and establish basic
properties of so-called $n$-spaces $(X,.)$ (which, in fact, are the same as Fell's fibre bundles)
and corresponding to them $C^*$-algebras $C^*(X,.)$. These investigations are continued in the next
part where we define spectral $n$-measures and characterize by means of them \textit{all}
representations of $C^*(X,.)$ for any $n$-space $(X,.)$. In Section~5 we give a new proof of Fell's
characterization of homogeneous $C^*$-algebras. In the last, sixth, part we formulate the spectral
theorem for finite systems of elements which generate $n$-homogeneous $C^*$-algebras and build
the $n$-functional calculus for them.

\subsection*{Notation and terminology} If a $C^*$-algebra $\AAa$ has a unit $e$, the spectrum of $x$
is denoted by $\sigma(x)$ and it is the set of all $\lambda \in \CCC$ for which $x - \lambda e$ is
noninvertible in $\AAa$. For two selfadjoint elements $a$ and $b$ of $\AAa$ we write $a \leqsl b$
provided $b - a$ is nonnegative. If $a \leqsl b$ and $b - a$ is invertible in $\AAa$, we shall
express this by writing $a < b$ or $b > a$. The $C^*$-algebra of all bounded operators
on a (complex) Hilbert space $\HHh$ is denoted by $\BBb(\HHh)$. Representations of unital
$C^*$-algebra need not preserve unities and they are understood as $*$-homomorphisms into
$\BBb(\HHh)$ for some Hilbert space $\HHh$. A representation of a $C^*$-algebra is $n$-dimensional
if it acts on an $n$-dimensional Hilbert space. A \textit{map} is a continuous function.

\SECT{Operator-valued Stone-Weierstrass theorem}

The classical Stone-Weierstrass theorem finds many applications in functional analysis and
approximation theory. It reached many generalizations as well --- see e.g. \cite{gl1}, \cite{bis},
\cite{lon}, \cite{pop}, \cite{g-m}, \cite{hof} or \cite{tim} and the references there (consult also
Theorem~1.4 in \cite{fe3}, \S4.7 in \cite{sak} and Corollary~11.5.3 in \cite{dix}). A first
significant counterpart of it for general $C^*$-algebras was established by Glimm \cite{gl1}. Much
later Longo \cite{lon} and Popa \cite{pop} proved independently a stronger version of Glimm's
result, solving a long-standing problem in theory of $C^*$-algebras. In comparison to the classical
Stone-Weierstrass theorem or, for example, to its generalization by Timofte \cite{tim}, Glimm's and
Longo's-Popa's theorems are not settled in function spaces. In this section we propose another
version of the theorem under discussion which takes place in spaces of functions taking values
in $C^*$-algebras. As such, it may be considered as its very natural generalization. Although
the results of Glimm and Longo and Popa are stronger and more general than ours, they involve
advanced machinery of $C^*$-algebras and advanced language of this theory, while our approach is
very elementary and its proof is similar to Stone's \cite{st1,st2}. To formulate our result, we need
to introduce the following notion.

\begin{dfn}{sp-sep}
Let $X$ be a set, $x$ and $y$ be distinct points of $X$ and let $\AAa$ be a unital $C^*$-algebra.
A collection $\FfF$ of functions from $X$ to $\AAa$ \textit{spectrally separates} points $x$ and $y$
if there is $f \in \FfF$ such that $f(x)$ and $f(y)$ are normal elements of $\AAa$ and their spectra
are disjoint. If $\FfF$ spectrally separates any two distinct points of $X$, we say that $\FfF$
\textit{spectrally separates points} of $X$.
\end{dfn}

The reader should notice that a collection of complex-valued functions spectrally separates two
points iff it separates them.\par
Whenever $\AAa$ is a unital $C^*$-algebra and $a$ is a selfadjoint element of $\AAa$, let us
denote by $M(a)$ the real number $\max \sigma(a)$. Further, if $X$ is a locally compact Hausdorff
space and $f\dd X \to \AAa$ is a map, we say that $f$ \textit{vanishes at infinity} iff for every
$\epsi > 0$ there is a compact set $K \subset X$ such that $\|f(x)\| < \epsi$ for any $x \in X
\setminus K$. The set of all $\AAa$-valued maps on $X$ vanishing at infinity is denoted
by $\CCc_0(X,\AAa)$. Notice that $\CCc_0(X,\AAa)$ is a $C^*$-algebra when it is equipped with
pointwise actions and the supremum norm induced by the norm of $\AAa$. Moreover, $\CCc_0(X,\AAa)$ is
unital iff $X$ is compact (recall that we assume here that $\AAa$ is unital).\par
A full version of our Stone-Weierstrass type theorem has the following form.

\begin{thm}{SW-full}
Let $X$ be a locally compact Hausdorff space and let $\AAa$ be a unital $C^*$-algebra. Let $\EeE$
be a $*$-subalgebra of $\CCc_0(X,\AAa)$ such that:
\begin{enumerate}[\upshape({A}X1)]\addtocounter{enumi}{-1}
\item if $X$ is noncompact, then for each $z \in X$ either $f_0(z)$ is invertible in $\AAa$ for some
   $f_0 \in \EeE$ or $f(z) = 0$ for any $f \in \EeE$;
\end{enumerate}
and for any two points $x$ and $y$ of $X$ one of the following two conditions is fulfilled:
\begin{enumerate}[\upshape({A}X1)]
\item either $x$ and $y$ are spectrally separated by $\EeE$, or
\item $M(f(x)) = M(f(y))$ for any $f \in \EeE$.
\end{enumerate}
Then the \textup{(}uniform\textup{)} closure of $\EeE$ in $\CCc_0(X,\AAa)$ coincides with
the $*$-algebra $\Delta_2(\EeE)$ of all maps $u \in \CCc_0(X,\AAa)$ such that for any $x, y \in X$
and each $\epsi > 0$ there exists $v \in \EeE$ with $\|v(z) - u(z)\| < \epsi$ for $z \in \{x,y\}$.
\end{thm}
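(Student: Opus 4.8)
The plan is to prove the two inclusions separately. The inclusion of the uniform closure $\overline{\EeE}$ into $\Delta_2(\EeE)$ is immediate and uses none of the hypotheses: if $u$ lies in the closure, then for every $\epsi>0$ there is $v\in\EeE$ with $\|v-u\|<\epsi$, and in particular $\|v(z)-u(z)\|<\epsi$ on every two-point set $\{x,y\}$, so $u\in\Delta_2(\EeE)$. The whole difficulty is the reverse inclusion $\Delta_2(\EeE)\subseteq\overline{\EeE}$, that is, upgrading two-point approximation to uniform approximation; this is where (AX0), (AX1) and (AX2) must be used.

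First I would record the tools that replace the lattice operations of Stone's classical argument. Since $\overline{\EeE}$ is a $C^*$-subalgebra of $\CCc_0(X,\AAa)$ and evaluation at a point is a $*$-homomorphism, it is closed under the continuous functional calculus applied fibrewise: for self-adjoint $f\in\overline{\EeE}$ and continuous $\phi$ with $\phi(0)=0$, the map $z\mapsto\phi(f(z))$ again lies in $\overline{\EeE}$. In particular $|f|\in\overline{\EeE}$, so for self-adjoint $a,b\in\overline{\EeE}$ the elements $a\wedge b:=\tfrac12(a+b)-\tfrac12|a-b|$ and $a\vee b:=\tfrac12(a+b)+\tfrac12|a-b|$ belong to $\overline{\EeE}$ and satisfy $a\wedge b\leqsl a,b\leqsl a\vee b$ in the order of self-adjoint elements; these serve as substitutes for the pointwise minimum and maximum. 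I would also note that $a\mapsto M(a)$ is $1$-Lipschitz on self-adjoint elements (from $a\leqsl b+\|a-b\|e$), so that $z\mapsto M(g(z)-u(z))$ is a genuine element of $\CCc_0(X,\RRR)$; this turns the order conditions ``$g\leqsl u+\epsi$'' into open conditions on $X$ that can be localized.

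With these in hand the skeleton follows Stone. Reducing to self-adjoint $u$ (legitimate because $\Delta_2(\EeE)$ is a $*$-algebra, so closed under taking real parts) and fixing $\epsi>0$, for each pair $p,q$ I choose a self-adjoint $g_{p,q}\in\EeE$ approximating $u$ within $\epsi$ at $p$ and $q$; continuity then gives neighbourhoods on which $M(g_{p,q}(z)-u(z))<\epsi$. Compactness of the relevant part of $X$, with the behaviour at infinity governed by (AX0) (which forces $u$ to vanish where all of $\EeE$ does and to be dominated by an invertible value elsewhere), lets me pass to finite subcovers and form iterated $\wedge$'s and then $\vee$'s of the $g_{p,q}$, aiming at a single $v$ with $M(v-u)<\epsi$ and $M(u-v)<\epsi$, hence $\|v-u\|<\epsi$.

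The main obstacle is precisely that $\wedge$ and $\vee$ are not lattice operations when the fibres are noncommutative: the discrepancy between $a\wedge b$ and a genuine infimum is governed by the commutator of $a$ and $b$, and iterating over a cover of size $k$ threatens an error growing with $k$. This is exactly where (AX1) and (AX2) enter. Where two points are spectrally separated, (AX1) together with the functional calculus of the separating element produces Urysohn-type elements of $\overline{\EeE}$ that are essentially orthogonal away from one of the points, so the corresponding local approximants have nearly disjoint influence and their combination incurs no accumulating error; where two points cannot be separated, (AX2) forces $M(f(x))=M(f(y))$ for all $f\in\EeE$ — and, applying this to every $\phi(f)$, the equality $\sigma(f(x))=\sigma(f(y))$ of full spectra — so such points move together under every approximant and never have to be reconciled. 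I expect the heart of the proof to be a quantitative lemma showing that, under the dichotomy (AX1)/(AX2), the iterated $\wedge$-$\vee$ construction stays within $\epsi$ of $u$ uniformly, thereby closing the gap between fibrewise functional calculus and a true lattice and completing the argument.
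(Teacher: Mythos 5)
Your reduction to the hard inclusion $\Delta_2(\EeE)\subset\bar{\EeE}$ and your diagnosis of the central obstacle are both correct, but the argument stops exactly where the real work begins: the ``quantitative lemma'' you say you expect --- that the iterated $\wedge$--$\vee$ construction stays uniformly within $\epsi$ of $u$ --- is precisely what fails in a noncommutative fibre, and the paper never proves anything of that form. The operation $a\vee b=\tfrac12(a+b)+\tfrac12|a-b|$ does dominate $a$ and $b$, but it is not a supremum: from $a\leqsl c$ and $b\leqsl c$ one cannot conclude $a\vee b\leqsl c$, so after combining $k$ local approximants each bounded above by $u+\epsi$ you retain no upper bound whatsoever; ``essential orthogonality'' of Urysohn-type elements does not repair this, since the loss occurs wherever two approximants fail to commute, not only near the pairs being spectrally separated. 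The paper circumvents the difficulty by two devices absent from your sketch. First, $\vee$ is used only where a one-sided bound suffices (\LEM{1}: a function $h\in\bar{\EeE}$ with $h(x)=f(x)$ and $f\leqsl h+\delta\cdot 1_X$; the touching at $x$ survives the iteration because all the local pieces agree there). Second, the missing upper bound is recovered by a different mechanism: for elements commuting with everything, the $\vee$ of $a_1,\dots,a_k$ is replaced by the power mean $(\sum_j a_j^n)^{1/n}$, controlled from above via the Loewner--Heinz inequality (\THM{l-h}, \LEM{2}, \LEM{3}); and the final assembly is done not by lattice operations at all but by a scalar partition of unity $w=\sum_j\alpha_j p_j$ pulled back from the quotient $X/\RRr$ by the closed equivalence relation ``$M(u(x))=M(u(y))$ for every selfadjoint $u\in\EeE$'' (\LEM{5}, \LEM{6}). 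Since the $\alpha_j$ are nonnegative central scalars summing to $1_X$, the upper bound on each $p_j$ passes to $w$; conditions (AX1)--(AX2) serve exactly to put these scalar weights into $\bar{\EeE}$ (\LEM{4}).

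A further point you would still have to supply: your plan needs constant functions (for the weights $\alpha_j$, and for producing the pair $(1,0)$ in two-point interpolation), but $1_X$ need not lie in $\bar{\EeE}$. The paper first adjoins the unit, applies the compact unital case to $\EeE+\CCC\cdot 1_X$, and then removes the unit again via \LEM{0}; the noncompact case is handled afterwards through the one-point compactification using (AX0). None of this is routine bookkeeping --- together with the Loewner--Heinz step and the quotient-space partition of unity it constitutes the substance of the proof.
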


As a consequence of the above result we obtain the following result, which is a special case
of Corollary~11.5.3 in \cite{dix}.

\begin{pro}{SW-classic}
Let $X$ be a locally compact Hausdorff space and let $\AAa$ be a unital $C^*$-algebra.
A $*$-subalgebra $\EeE$ of $\CCc_0(X,\AAa)$ is dense in $\CCc_0(X,\AAa)$ iff $\EeE$ spectrally
separates points of $X$ and for every $x \in X$ the set $\EeE(x) := \{f(x)\dd\ f \in \EeE\}$ is
dense in $\AAa$.
\end{pro}

It is worth noting that we know no characterization of dense $*$-subalgebras of $\CCc_0(X,\AAa)$
in case $\AAa$ does not have a unit.\par
The proof of \THM{SW-full} is partially based on the original proof of the Stone-Weierstrass theorem
given by Stone \cite{st1,st2}. However, the key tool in our proof is the so-called Loewner-Heinz
inequality (for the discussion on this inequality, see page~150 in \cite{bha}), first proved
by Loewner \cite{low}:

\begin{thm}{l-h}
Let $a$ and $b$ be two selfadjoint nonnegative elements in a $C^*$-algebra such that $a \leqsl
b$. Then for every $s \in (0,1)$, $a^s \leqsl b^s$.
\end{thm}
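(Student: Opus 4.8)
The plan is to realize the power function $t \mapsto t^s$ through the integral representation
$$t^s = \frac{\sin(s\pi)}{\pi}\int_0^\infty \lambda^{s-1}\,\frac{t}{t+\lambda}\dint{\lambda} \qquad (t > 0,\ 0 < s < 1),$$
which is verified by the substitution $\lambda = t\mu$ reducing it to the Beta integral $\int_0^\infty \frac{\mu^{s-1}}{1+\mu}\dint{\mu} = \pi/\sin(s\pi)$. The point is that each integrand involves only the much simpler function $t \mapsto \frac{t}{t+\lambda} = 1 - \frac{\lambda}{t+\lambda}$, whose operator monotonicity is elementary, and that a positive-weight integral of operator-monotone expressions remains operator monotone.

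First I would reduce to a convenient situation. Since $a^s$ and $b^s$ are defined through the continuous functional calculus by the function $t \mapsto t^s$ (which vanishes at $0$) and are therefore computed identically in $\AAa$ and in any $C^*$-algebra containing it, I may pass to the unitization and assume $\AAa$ is unital with unit $e$. Next, for $\epsi > 0$ both $a + \epsi e$ and $b + \epsi e$ are invertible, with spectra in $[\epsi,\infty)$, and still satisfy $a + \epsi e \leqsl b + \epsi e$. If the inequality holds for these strictly positive elements, then letting $\epsi \to 0^+$ and invoking norm-continuity of the functional calculus together with closedness of the cone of nonnegative elements yields $b^s - a^s \geqsl 0$. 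Thus I may assume $a$ and $b$ are invertible, with $\sigma(a)\cup\sigma(b)$ contained in a compact subset $[\delta,\|b\|]$ of $(0,\infty)$.

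The algebraic heart is the claim that inversion reverses the order: if $0 \leqsl x \leqsl y$ with $x,y$ invertible, then $y^{-1} \leqsl x^{-1}$. I would prove this by conjugation. From $x \leqsl y$, conjugating by $x^{-1/2}$ gives $e \leqsl x^{-1/2}yx^{-1/2}$; an element dominating $e$ has spectrum in $[1,\infty)$, so its inverse $x^{1/2}y^{-1}x^{1/2}$ has spectrum in $(0,1]$ and hence $\leqsl e$; conjugating once more by $x^{-1/2}$ gives $y^{-1}\leqsl x^{-1}$. Applying this to $a + \lambda e \leqsl b + \lambda e$ for each $\lambda > 0$ yields $(b + \lambda e)^{-1} \leqsl (a + \lambda e)^{-1}$, which rearranges to
$$a(a+\lambda e)^{-1} = e - \lambda (a+\lambda e)^{-1} \leqsl e - \lambda(b+\lambda e)^{-1} = b(b+\lambda e)^{-1}.$$

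Finally I would assemble the integral. On the compact set $[\delta,\|b\|]$ the truncated integrals $\frac{\sin(s\pi)}{\pi}\int_{1/N}^N \lambda^{s-1}\frac{t}{t+\lambda}\dint{\lambda}$ converge uniformly to $t^s$; applying the isometric functional-calculus homomorphism, which carries the uniformly convergent Riemann sums to the corresponding norm-convergent sums of operators, gives the operator identity $a^s = \frac{\sin(s\pi)}{\pi}\int_0^\infty \lambda^{s-1}\,a(a+\lambda e)^{-1}\dint{\lambda}$ and likewise for $b$. Convergence of these $\AAa$-valued integrals follows from the estimates $\|a(a+\lambda e)^{-1}\| \leqsl 1$ and $\|a(a+\lambda e)^{-1}\| \leqsl \|a\|/\lambda$, which make $\lambda^{s-1}\|a(a+\lambda e)^{-1}\|$ integrable at both endpoints (since $s-1 > -1$ near $0$ and $s-2 < -1$ near $\infty$). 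As the integrand for $b$ dominates that for $a$ in the operator order for every $\lambda$, and the positive cone is closed under nonnegative scalar multiples and norm limits of Riemann sums, integration preserves the inequality and gives $a^s \leqsl b^s$. The main obstacle I anticipate is not any single deep fact but the careful bookkeeping of this operator-valued integral—interchanging the functional calculus with the improper integral and checking norm convergence at the two endpoints; the order-reversing property of inversion, though the genuine conceptual content, is short once the conjugation trick is in place.
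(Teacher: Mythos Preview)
Your argument is correct and is the standard proof of the Loewner--Heinz inequality via the integral representation of $t^s$. Note, however, that the paper does not prove this theorem at all: it is quoted as a known result, with references to Loewner's original paper and to Bhatia's \emph{Matrix Analysis} (page~150), and is used as a black box in the subsequent lemmas. Your proof is essentially the one found in Bhatia, so in spirit you are reproducing exactly the argument the paper is appealing to by citation.
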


The proof of \THM{SW-full} is preceded by several auxiliary results. For simplicity, the unit
of $\AAa$ will be denoted by $1$ and the function from $X$ to $\AAa$ constantly equal to $1$ will be
denoted by $1_X$. We also preserve the notation of \THM{SW-full}. Additionally, $\bar{\EeE}$ stands
for the (uniform) closure of $\EeE$ in $\CCc_0(X,\AAa)$.

\begin{lem}{0}
Suppose $X$ is compact. Then $1_X \in \bar{\EeE}$ \iaoi{} for every $x \in X$ there is $f \in \EeE$
such that $f(x)$ is invertible in $\AAa$.
\end{lem}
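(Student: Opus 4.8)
The ``only if'' implication is the easy one. The plan is: if $1_X \in \bar{\EeE}$, choose $f \in \EeE$ with $\|f - 1_X\| < 1$. Then for each $x \in X$ we have $\|1 - f(x)\| \leqsl \|1_X - f\| < 1$, so $f(x) = 1 - (1 - f(x))$ is invertible in $\AAa$ (its inverse is furnished by the Neumann series). Thus the invertibility condition holds, and this direction needs nothing beyond openness of the set of invertibles.

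For the converse, assume that for every $x \in X$ there is $f_x \in \EeE$ with $f_x(x)$ invertible. First I would pass to positive functions: since $\EeE$ is a $*$-subalgebra, $g_x := f_x^* f_x \in \EeE$, the element $g_x(y) = f_x(y)^* f_x(y)$ is nonnegative for every $y \in X$, and $g_x(x)$ is a positive \emph{invertible} element of $\AAa$, whence $g_x(x) \geqsl \delta_x \cdot 1$ for some $\delta_x > 0$. Using continuity of $g_x$ together with the elementary fact that $g_x(y) \geqsl g_x(x) - \|g_x(y) - g_x(x)\|\cdot 1$ (valid because the difference is selfadjoint), I would choose an open neighbourhood $U_x$ of $x$ on which $\|g_x(y) - g_x(x)\| < \tfrac12\delta_x$, so that $g_x(y) \geqsl \tfrac12\delta_x \cdot 1$ for all $y \in U_x$.

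Now compactness of $X$ enters. Extract a finite subcover $U_{x_1},\dots,U_{x_m}$ of $X$ and set $g := g_{x_1} + \dots + g_{x_m} \in \EeE$. Each summand is pointwise nonnegative, and at every $y \in X$ at least one summand is $\geqsl \tfrac12\delta_{x_j}\cdot 1$, so $g(y) \geqsl \delta\cdot 1$ for all $y$, where $\delta := \tfrac12\min_j \delta_{x_j} > 0$; also $g \leqsl \|g\|\cdot 1_X$. Rescaling to $h := g/\|g\| \in \EeE$ gives $\alpha\cdot 1_X \leqsl h \leqsl 1_X$ with $\alpha := \delta/\|g\| \in (0,1]$, hence $\|1_X - h\| \leqsl 1 - \alpha < 1$. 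The final step produces $1_X$ from the identity
\[
1_X - (1_X - h)^{N+1} = h\sum_{k=0}^{N}(1_X - h)^k .
\]
Its right-hand side is a polynomial in $h$ with zero constant term, and therefore belongs to $\EeE$; its left-hand side converges to $1_X$ since $\|(1_X - h)^{N+1}\| \leqsl (1-\alpha)^{N+1} \to 0$. Consequently $1_X \in \bar{\EeE}$. (Alternatively, one may invoke the continuous functional calculus for the positive invertible $g$ in the unital algebra $\CCc_0(X,\AAa)$, approximating the constant $1$ on $\sigma(g) \subset [\delta,\|g\|]$ by polynomials vanishing at $0$.)

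The main obstacle is the passage from the purely pointwise invertibility hypothesis to a \emph{single} element of $\EeE$ bounded below uniformly by a positive multiple of the unit; this is precisely where the local lower bounds are glued together by compactness of $X$, and it is the only place that compactness is used. Once such a $g$ is secured, the residual difficulty---that $\EeE$ need not contain $1_X$, so $g^{-1}$ is unavailable inside $\EeE$---is bypassed by the displayed Neumann-type computation, which involves only products of powers of $g$, all of which lie in $\EeE$.
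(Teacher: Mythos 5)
Your proof is correct and follows essentially the same route as the paper: necessity via openness of the set of invertibles, and sufficiency by passing to $f_x^*f_x$, using compactness to glue the local positive lower bounds into a single $g \in \EeE$ with $g \geqsl \delta \cdot 1_X$, and then approximating $1_X$ by polynomials in $g$ with zero constant term. The only (harmless) cosmetic difference is in the last step, where you use the explicit Neumann-type identity $1_X - (1_X-h)^{N+1} = h\sum_{k=0}^{N}(1_X-h)^k$ instead of the paper's polynomial approximation of a continuous function vanishing at $0$ and equal to $1$ on $\sigma(g)$.
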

\begin{proof}
The necessity is clear (since the set of all invertible elements is open in $\AAa$). To prove
the sufficiency, for each $x \in X$ take $f_x \in \EeE$ such that $f_x(x)$ is invertible. Put $u_x =
f_x^* f_x \in \EeE$ and let $V_x \subset X$ consist of all $y \in X$ such that $u_x(y) > 0$.
It follows from the continuity of $u_x$ that $V_x$ is open. By the compactness of $X$, $X =
\bigcup_{j=1}^p V_{x_j}$ for some finite system $x_1,\ldots,x_p$. Put $u = \sum_{j=1}^p u_{x_j} \in
\EeE$ and note that $u(x) > 0$ for each $x \in X$. This implies that $u$ is invertible
in $\CCc_0(X,\AAa)$. Let $f\dd [0,\|u\|] \to \RRR$ be a map with $f(0) = 0$ and $f\bigr|_{\sigma(u)}
\equiv 1$. There is a sequence of real polynomials $p_1,p_2,\ldots$ which converge uniformly to $f$
on $[0,\|u\|]$. Then $p_n(u) \to f(u) = 1_X$ (in the norm topology) and hence $1_X \in \bar{\EeE}$.
\end{proof}

\begin{lem}{1}
Suppose $X$ is compact and $1_X \in \bar{\EeE}$. Let $x \in X$ and $\delta > 0$ be arbitrary. For
any selfadjoint $f \in \Delta_2(\EeE)$ there are selfadjoint $g, h \in \bar{\EeE}$ such that $g(x) =
f(x) = h(x)$ and $g - \delta \cdot 1_X \leqsl f \leqsl h + \delta \cdot 1_X$.
\end{lem}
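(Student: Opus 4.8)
The plan is to reduce to the case $f(x)=0$ and then to build the upper witness $h$ as a finite ``noncommutative maximum'' of elements of $\bar{\EeE}$ that all vanish at $x$, the lower witness $g$ being produced by applying the same construction to $-f$. First I would observe that evaluation $\bar{\EeE}\ni u\mapsto u(x)\in\AAa$ is a $*$-homomorphism, so its range is the closed set $\overline{\EeE(x)}$, where $\EeE(x)=\{v(x)\dd v\in\EeE\}$; since the defining property of $\Delta_2(\EeE)$ at $x$ forces $f(x)\in\overline{\EeE(x)}$, there is a selfadjoint $u_0\in\bar{\EeE}$ with $u_0(x)=f(x)$ (take the real part of any preimage). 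Approximating $u_0$ uniformly by $\EeE$ shows $f-u_0\in\Delta_2(\EeE)$, and this function is selfadjoint and vanishes at $x$; proving the lemma for it and adding $u_0$ back gives the lemma for $f$. So I assume $f(x)=0$ and seek selfadjoint $g,h\in\bar{\EeE}$ with $g(x)=h(x)=0$ and $g-\delta\cdot 1_X\leqsl f\leqsl h+\delta\cdot 1_X$.

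The decisive structural step uses (AX2) to show that $f$ vanishes on the entire non-separated class of $x$. If $y$ is \emph{not} spectrally separated from $x$, then $M(v(x))=M(v(y))$ for every $v\in\EeE$, and, applying this to $-v$ (legitimate since $\EeE$ is a linear space), also $\min\sigma(v(x))=\min\sigma(v(y))$. Approximating $f$ at the pair $\{x,y\}$ by some $v\in\EeE$ and using $M(f(x))=M(0)=0$ together with the $1$-Lipschitz continuity of $M$ on selfadjoint elements, I obtain $\max\sigma(f(y))=\min\sigma(f(y))=0$, hence $f(y)=0$. Consequently, at every such $y$ I may use the zero function of $\bar{\EeE}$, which satisfies $0\geqsl f-\delta\cdot 1_X$ on the open set $\{\,\|f\|<\delta\,\}\ni y$.

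For a point $y$ that \emph{is} spectrally separated from $x$ I would manufacture a bump $w_y\in\bar{\EeE}$ with $0\leqsl w_y\leqsl 1_X$, $w_y(x)=0$, $w_y(y)=1$: choosing $f_0\in\EeE$ with $f_0(x),f_0(y)$ normal and $\sigma(f_0(x))\cap\sigma(f_0(y))=\varempty$, I approximate a Urysohn function by a real $*$-polynomial $p$ in $z,\bar z$ and put $b=p(f_0,f_0^*)\in\EeE$ (selfadjoint), so that by spectral mapping $\sigma(b(x))$ and $\sigma(b(y))$ cluster near $0$ and $1$; then $w_y=\eta(b)$ for a continuous $\eta\dd\RRR\to[0,1]$ equal to $0$ near $0$ and to $1$ near $1$ does the job, using that $\bar{\EeE}$ is unital and that evaluation commutes with functional calculus. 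Taking selfadjoint $v_y\in\EeE$ within $\delta/2$ of $f$ at both $x$ and $y$ and setting $\hat v_y=w_y^{1/2}v_y w_y^{1/2}\in\bar{\EeE}$, I get $\hat v_y(x)=0$ and, since $w_y(y)=1$ and $M$ is continuous, $\hat v_y\geqsl f-\delta\cdot 1_X$ on a neighbourhood of $y$. By compactness finitely many of the neighbourhoods from this paragraph and the previous one cover $X$; I let $h$ be the iterated maximum $a\vee b:=\tfrac12(a+b+|a-b|)$ of the corresponding finite family. As $|s|\geqsl\pm s$ for selfadjoint $s$, one has $a\vee b\geqsl a,b$, so $h$ dominates every member and thus $h\geqsl f-\delta\cdot 1_X$ on all of $X$; and since evaluation at $x$ is a $*$-homomorphism it carries $\vee$ to $\vee$, whence $h(x)=0$. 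Applying this to $-f$ and negating yields $g$, and adding $u_0$ back finishes.

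The one genuine obstacle is the absence of lattice operations in a noncommutative $C^*$-algebra: there is no supremum of the local witnesses. The device $a\vee b=\tfrac12(a+b+|a-b|)$ supplies only an upper bound, not a least one, so it is essential both that it preserves the common value $0$ at $x$ and that a mere upper bound is precisely what the one-sided estimate $f\leqsl h+\delta\cdot 1_X$ requires. The Loewner--Heinz inequality (\THM{l-h}) enters to guarantee that the functional-calculus operations employed here --- the square roots $w_y^{1/2}$ and $|a-b|=((a-b)^2)^{1/2}$ and the passage to fractional powers --- are operator monotone, so that the order relations survive these manipulations; this is the point at which the special $C^*$-order theory, rather than the purely formal Stone argument, becomes indispensable.
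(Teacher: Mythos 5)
Your overall architecture --- local witnesses in $\bar{\EeE}$ that take the value $f(x)$ at $x$ and dominate $f-\delta\cdot 1_X$ on a neighbourhood of a given point $y$, glued over a finite subcover by the iterated noncommutative maximum $a\vee b=\tfrac12(a+b+|a-b|)$ --- is exactly the paper's. The gap is in how you manufacture the local witnesses. Your case distinction ``either $y$ is spectrally separated from $x$, or $M(v(x))=M(v(y))$ for all $v\in\EeE$'' is precisely the dichotomy (AX1)/(AX2) of \THM{SW-full}, which is \emph{not} a hypothesis of this lemma: the lemma assumes only that $X$ is compact and $1_X\in\bar{\EeE}$. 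This is not a cosmetic point, because the lemma feeds (through \LEM{3}) into \LEM{4}, whose hypotheses provide spectral separation only for pairs outside a given equivalence relation $\RRr$ and say nothing about $M(v(x))$ versus $M(v(y))$ for $\RRr$-related pairs; with your proof the chain would break there. Concretely, without the dichotomy there may exist $y$ that is neither spectrally separated from $x$ nor satisfies $M(v(x))=M(v(y))$ for all $v\in\EeE$, and at such a $y$ you have no witness: you cannot conclude $f(y)=0$, so the zero function need not dominate $f-\delta\cdot 1_X$ near $y$, and the bump $w_y$ is unavailable.

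The repair is the paper's one-line move, which you already use for the single point $x$ but, oddly, not for pairs: the evaluation $\bar{\EeE}\ni u\mapsto(u(x),u(y))\in\AAa\oplus\AAa$ is a $*$-homomorphism between $C^*$-algebras, hence has closed range, and $f\in\Delta_2(\EeE)$ places $(f(x),f(y))$ in the closure of the image of $\EeE$; therefore for \emph{every} $y\in X$ there is $f_y\in\bar{\EeE}$ (selfadjoint after replacing it by $\tfrac12(f_y+f_y^*)$) with $f_y(x)=f(x)$ and $f_y(y)=f(y)$ exactly. Then $U_y=\{z\dd\ \|f_y(z)-f(z)\|<\delta\}$ is the required neighbourhood on which $f_y\geqsl f-\delta\cdot 1_X$; no case distinction, no reduction to $f(x)=0$, and no bump functions are needed, and your maximum construction finishes the argument as you describe. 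A final side remark: the Loewner--Heinz inequality plays no role in this lemma --- the facts $|s|\geqsl\pm s$ and $c^*ac\geqsl c^*bc$ for $a\geqsl b$ are elementary and do not involve operator monotonicity of fractional powers; \THM{l-h} is needed only later, in \LEM{2}, for the estimates on $(\sum_j a_j^n)^{1/n}$.
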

\begin{proof}
It follows from the definition of $\Delta_2(\EeE)$ (and the fact that $*$-homomorphisms between
$C^*$-algebras have closed ranges) that for every $y \in X$ there is $f_y \in \bar{\EeE}$ with
$f_y(z) = f(z)$ for $z \in \{x,y\}$. Replacing, if needed, $f_y$ by $\frac12(f_y + f_y^*)$, we may
assume that $f_y$ is selfadjoint. Let $U_y \subset X$ consist of all $z \in X$ such that
$\|f_y(z) - f(z)\| < \delta$. Take a finite number of points $x_1,\ldots,x_p$ for which $X =
\bigcup_{j=1}^p U_{x_j}$. For simplicity, put $V_j = U_{x_j}$ and $g_j = f_{x_j}\ (j=1,\ldots,p)$.
Observe that $f - \delta \cdot 1_X \leqsl g_j$ on $V_j$ and $g_j(x) = f(x)$. We define by induction
functions $h_1,\ldots,h_p \in \bar{\EeE}$: $h_1 = g_1$ and $h_k = \frac12(h_{k-1} + g_k + |h_{k-1}
- g_k|)$ for $k = 2,\ldots,p$ where $|u| = \sqrt{u^* u}$ for each $u \in \bar{\EeE}$. Since
$\bar{\EeE}$ is a $C^*$-algebra, we clearly have $h_k \in \bar{\EeE}$. Use induction to show that
$h_j(x) = f(x)$ and $g_j \leqsl h_p$ for $j = 1,\ldots,p$. Then $h = h_p$ is the function
we searched for. Indeed, $h(x) = f(x)$ and for any $y \in X$ there is $j \in \{1,\ldots,p\}$ such
that $y \in V_j$, which implies that $f(y) - \delta \cdot 1 \leqsl g_j(y) \leqsl h(y)$.\par
Now if we apply the above argument to the function $-f$, we shall obtain a selfadjoint function
$h' \in \bar{\EeE}$ such that $-f(x) = h'(x)$ and $-f \leqsl h' + \delta \cdot 1_X$. Then put $g :=
- h'$ to complete the proof.
\end{proof}

\begin{lem}{2}
Let $\epsi > 0$, $r > 0$ and $k \geqsl 1$ be given. There is a natural number $N = N(\epsi,r,k)$
with the following property. If $a_1,\ldots,a_k,b$ are selfadjoint elements of $\AAa$ such that
$0 \leqsl a_j \leqsl b$, $b a_j = a_j b\ (j=1,\ldots,k)$ and $\|b\| \leqsl r$, then $a_s \leqsl
(\sum_{j=1}^k a_j^n)^{\frac1n} \leqsl b + \epsi \cdot 1$ for any $s \in \{1,\ldots,k\}$ and
$n \geqsl N$.
\end{lem}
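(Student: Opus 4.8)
The plan is to prove the two inequalities separately, with the lower bound $a_s \leqsl (\sum_{j=1}^k a_j^n)^{\frac1n}$ valid for every $n \geqsl 1$ and only the upper bound requiring $n$ to be large. Both rest on the operator monotonicity of the root $t \mapsto t^{\frac1n}$ supplied by \THM{l-h}.

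For the lower bound I would first note that, since each $a_j \geqsl 0$, the omitted summands are nonnegative and therefore $a_s^n \leqsl \sum_{j=1}^k a_j^n$. Applying \THM{l-h} with the exponent $\frac1n \in (0,1)$ to this inequality yields $(a_s^n)^{\frac1n} \leqsl (\sum_{j=1}^k a_j^n)^{\frac1n}$, and the left-hand side equals $a_s$ by uniqueness of the positive $n$-th root. No restriction on $N$ is needed here.

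For the upper bound, the crucial point---and the only place where the hypothesis $b a_j = a_j b$ enters---is the passage from $a_j \leqsl b$ to $a_j^n \leqsl b^n$. In general $a \leqsl b$ does \emph{not} imply $a^n \leqsl b^n$ for $n \geqsl 2$; however, since $a_j$ commutes with $b$, these two elements generate a commutative $C^*$-subalgebra in which $0 \leqsl a_j \leqsl b$ becomes a pointwise inequality of continuous functions, whence $a_j^n \leqsl b^n$ follows immediately. Summing over $j$ gives $\sum_{j=1}^k a_j^n \leqsl k b^n$, and a second application of \THM{l-h} (again with exponent $\frac1n$) produces $(\sum_{j=1}^k a_j^n)^{\frac1n} \leqsl (k b^n)^{\frac1n} = k^{\frac1n} b$, the final equality holding because $k^{\frac1n} b$ is the positive element whose $n$-th power is $k b^n$.

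It then remains to absorb the scalar factor. Writing $k^{\frac1n} b = b + (k^{\frac1n} - 1) b$ and using $0 \leqsl b \leqsl \|b\| \cdot 1 \leqsl r \cdot 1$ together with $k^{\frac1n} - 1 \geqsl 0$, I obtain $(\sum_{j=1}^k a_j^n)^{\frac1n} \leqsl b + (k^{\frac1n} - 1) r \cdot 1$. Since $k^{\frac1n} \to 1$ as $n \to \infty$, one chooses $N = N(\epsi,r,k)$ so that $(k^{\frac1n} - 1) r < \epsi$ for all $n \geqsl N$ (any $N > \frac{\ln k}{\ln(1 + \epsi/r)}$ suffices), which finishes the estimate. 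The only genuine obstacle is the failure of operator monotonicity of $t \mapsto t^n$; once one recognizes that the commutativity assumption sidesteps it, the argument reduces to a short combination of the Loewner-Heinz inequality with elementary scalar bounds.
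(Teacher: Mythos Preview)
Your proof is correct and follows essentially the same route as the paper: both obtain the lower bound from $a_s^n \leqsl \sum_j a_j^n$ via \THM{l-h}, use commutativity of $b$ with $a_j$ to pass from $a_j \leqsl b$ to $a_j^n \leqsl b^n$, apply \THM{l-h} again to reach $k^{1/n} b$, and then absorb the scalar factor into $\epsi \cdot 1$ for large $n$. Your explicit bound $N > \ln k / \ln(1+\epsi/r)$ matches the paper's choice of $N$ with $k^{1/n} \leqsl 1 + \epsi/r$.
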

\begin{proof}
Let $N \geqsl 2$ be such that $\sqrt[n]{k} \leqsl 1 + \frac{\epsi}{r}$ for each $n \geqsl N$ and let
$a_1,\ldots,a_k,b$ be as in the statement of the lemma. Since then $a_s^n \leqsl \sum_{j=1}^k
a_j^n$, \THM{l-h} yields $a_s \leqsl (\sum_{j=1}^k a_j^n)^{\frac1n}$. Further, since $b$ commutes
with $a_j$, we get $a_j^n \leqsl b^n$ and consequently $\sum_{j=1}^k a_j^n \leqsl k b^n$. So,
another application of \THM{l-h} gives us $(\sum_{j=1}^k a_j^n)^{\frac1n} \leqsl \sqrt[n]{k} b$. So,
it suffices to have $\sqrt[n]{k} b \leqsl b + \epsi \cdot 1$ which is fulfilled for $n \geqsl N$
because $\|(\sqrt[n]{k} - 1) b\| \leqsl (\sqrt[n]{k} - 1) r \leqsl \epsi$.
\end{proof}

\begin{lem}{3}
Suppose $X$ is compact and $1_X \in \bar{\EeE}$. If $f \in \Delta_2(\EeE)$ commutes with every
member of $\EeE$, then $f \in \bar{\EeE}$.
\end{lem}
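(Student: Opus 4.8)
The plan is to imitate Stone's classical approximation argument, using \LEM{1} to manufacture one-sided envelopes in $\bar{\EeE}$ and using \LEM{2} (which rests on \THM{l-h}) to play the role of the lattice operation $\min$, which is unavailable in the noncommutative setting. First I would reduce to the case $f = f^*$. Since $\Delta_2(\EeE)$ is a $*$-algebra and $\EeE$ is $*$-closed, taking adjoints in $fg = gf$ shows that $f^*$ also commutes with every member of $\EeE$; hence the selfadjoint elements $\frac12(f+f^*)$ and $\frac1{2i}(f-f^*)$ lie in $\Delta_2(\EeE)$ and commute with $\EeE$, so it suffices to treat each of them. I would also record at the outset that, multiplication being continuous, $f$ then commutes with every element of $\bar{\EeE}$, not merely of $\EeE$; this will be used repeatedly.

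Assume now $f = f^*$ and fix $\epsi > 0$, written as $\epsi = \delta + \epsi'$ with $\delta,\epsi' > 0$. By \LEM{1}, for each $y \in X$ there is a selfadjoint $h_y \in \bar{\EeE}$ with $h_y(y) = f(y)$ and $f \leqsl h_y + \delta \cdot 1_X$. Continuity of $h_y - f$ yields an open neighbourhood $W_y \ni y$ on which $\|h_y - f\| < \delta$, so that $h_y \leqsl f + \delta \cdot 1_X$ there, and compactness gives $X = \bigcup_{j=1}^k W_{y_j}$. Writing $h_j = h_{y_j}$, I would thus have for every $j$ the global lower bound $f - \delta\cdot 1_X \leqsl h_j$, together with the local fact that each $z \in X$ lies in some $W_j$, at which $h_j(z) \leqsl f(z) + \delta \cdot 1$.

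The heart of the argument is to form the ``minimum'' of the $h_j$. Fix a constant $C$ with $C \geqsl \|h_j\|$ for all $j$ and set $a_j = C\cdot 1_X - h_j$ and $b = C\cdot 1_X - f + \delta\cdot 1_X$. Then $0 \leqsl a_j$; the global lower bound gives $a_j \leqsl b$; and, decisively, $b$ commutes with each $a_j$ precisely because $f$ commutes with $h_j \in \bar{\EeE}$. Applying \LEM{2} pointwise in $\AAa$ (the integer $N$ there being uniform in $z$, since $\|b(z)\| \leqsl \|b\|$ and $k$ is fixed), I obtain for all large $n$ and every $s$
$$a_s \leqsl \bigl(\textstyle\sum_{j=1}^k a_j^n\bigr)^{1/n} \leqsl b + \epsi'\cdot 1_X.$$
The element $w := C\cdot 1_X - \bigl(\sum_{j=1}^k a_j^n\bigr)^{1/n}$ belongs to $\bar{\EeE}$, because $1_X \in \bar{\EeE}$ and the nonnegative $n$-th root of an element of the $C^*$-algebra $\bar{\EeE}$ stays inside $\bar{\EeE}$. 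I expect this construction of a commuting common upper bound $b$ to be the main obstacle: it is exactly what allows \LEM{2} to furnish a substitute for $\min$, and it is available only thanks to the commutation hypothesis on $f$ (without it, the Loewner--Heinz comparison collapses).

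Finally I would read off a two-sided estimate. From $a_s \leqsl \bigl(\sum_j a_j^n\bigr)^{1/n}$ one gets $w \leqsl h_s$ for every $s$, hence $w \leqsl f + \delta\cdot 1_X$ by evaluating at each point in its covering set; from $\bigl(\sum_j a_j^n\bigr)^{1/n} \leqsl b + \epsi'\cdot 1_X$ one gets $f - (\delta+\epsi')\cdot 1_X \leqsl w$. Since $w - f$ is selfadjoint, these bounds give $\|w - f\| \leqsl \delta + \epsi' = \epsi$; as $\bar{\EeE}$ is closed and $\epsi$ is arbitrary, $f \in \bar{\EeE}$. Reassembling the real and imaginary parts then settles the general case.
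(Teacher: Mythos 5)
Your proof is correct and follows essentially the same route as the paper: one-sided touching functions from \LEM{1}, a finite subcover, and \LEM{2} (via the Loewner--Heinz inequality) applied pointwise with the commuting upper bound built from $f$, which is exactly where the commutation hypothesis enters. The only difference is cosmetic: you form an approximate \emph{minimum} of upper envelopes by reflecting through the constant $C\cdot 1_X$, whereas the paper forms an approximate \emph{maximum} of lower envelopes after shifting by $c\cdot 1_X$; the two are mirror images of one another.
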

\begin{proof}
Since $\Delta_2(\EeE)$ is a $*$-algebra, we may assume that $f$ is selfadjoint. Fix $\delta > 0$.
By \LEM{1}, for every $x \in X$ there is $f_x \in \bar{\EeE}$ with $f_x(x) = f(x)$ and $f_x \leqsl
f + \delta \cdot 1_X$. Let $U_x \subset X$ consist of all $y \in X$ such that $f_x(y) > f(y)
- \delta \cdot 1$. We infer from the compactness of $X$ that $X = \bigcup_{j=1}^k U_{x_j}$ for some
points $x_1,\ldots,x_k \in X$. For simplicity, we put $V_j = U_{x_j}$ and $g_j = f_{x_j}$. We then
have
\begin{equation}\label{eqn:1}
g_j(x) \geqsl f(x) - \delta \cdot 1 \quad \textup{ for any } x \in V_j
\end{equation}
and
\begin{equation}\label{eqn:2}
g_j(x) \leqsl f(x) + \delta \cdot 1 \quad \textup{ for any } x \in X.
\end{equation}
It follows from the compactness of $X$ that there is a constant $c > 0$ such that $g_j + c \cdot 1_X
\geqsl 0\ (j=1,\ldots,k)$ and $f + (c - \delta) \cdot 1_X \geqsl 0$. Further, there is $r > 0$ such
that $f + (c + \delta) \cdot 1_X \leqsl r \cdot 1_X$. Now let $N = N(\delta,r,k)$ be as in \LEM{2}.
Since $f$ commutes with each member of $\bar{\EeE}$, we conclude from that lemma and from
\eqref{eqn:2} that $g_s(x) + c \cdot 1 \leqsl [\sum_{j=1}^k (g_j(x) + c \cdot 1)^n]^{\frac1n} \leqsl
f(x) + (c + 2 \delta) \cdot 1$ for any $x \in X$. Finally, since $1_X \in \bar{\EeE}$, the function
$g := [\sum_{j=1}^k (g_j + c \cdot 1_X)^n]^{\frac1n} - c \cdot 1_X$ belongs to $\bar{\EeE}$. What is
more, $g \leqsl f + 2\delta \cdot 1_X$ and $g(x) \geqsl g_j(x) \geqsl f(x) - \delta \cdot 1$ for
$x \in V_j$ (cf. \eqref{eqn:1}). This gives $f - \delta \cdot 1_X \leqsl g$ on the whole space $X$
and therefore $-\delta \cdot 1_X \leqsl g - f \leqsl 2\delta \cdot 1_X$, which is equivalent
to $\|g - f\| \leqsl 2\delta$ and finishes the proof.
\end{proof}

\begin{lem}{4}
Suppose $X$ is compact, $1_X \in \bar{\EeE}$ and there exists an equivalence relation $\RRr$ on $X$
such that two points $x$ and $y$ are spectrally separated by $\EeE$ whenever $(x,y) \notin \RRr$.
Then every map $g\dd X \to \CCC \cdot 1 \subset \AAa$ which is constant on each equivalence class
with respect to $\RRr$ belongs to $\bar{\EeE}$.
\end{lem}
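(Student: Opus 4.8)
The plan is to deduce the statement from \LEM{3}. First I would write $g = \phi \cdot 1_X$ with $\phi \dd X \to \CCC$ continuous and constant on each $\RRr$-class, and observe that, since every value $g(z) = \phi(z) \cdot 1$ is central in $\AAa$, the map $g$ commutes with every member of $\EeE$; as $X$ is compact, $g \in \CCc_0(X,\AAa)$. By \LEM{3} it therefore suffices to prove $g \in \Delta_2(\EeE)$. I would also record the harmless reduction $\Delta_2(\EeE) = \Delta_2(\bar{\EeE})$ (an $\epsi/2$-argument using density of $\EeE$ in $\bar{\EeE}$), so that I am free to approximate at two points by elements of $\bar{\EeE}$ rather than of $\EeE$; this lets me use $1_X \in \bar{\EeE}$ freely.

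Next I would fix $x, y \in X$ and $\epsi > 0$ and split into two cases. When $(x,y) \in \RRr$ the function $\phi$ takes a common value $\lambda$ at $x$ and $y$, and $v := \lambda \cdot 1_X \in \bar{\EeE}$ already matches $g$ exactly at both points. The substantive case is $(x,y) \notin \RRr$, where the hypothesis provides $f \in \EeE$ with $f(x), f(y)$ normal and $\sigma(f(x)) \cap \sigma(f(y)) = \varempty$. Here the idea is to realise the two prescribed scalars through a single $*$-polynomial in $f$: on the disjoint compact set $K := \sigma(f(x)) \cup \sigma(f(y)) \subset \CCC$ the function equal to $\phi(x)$ on $\sigma(f(x))$ and to $\phi(y)$ on $\sigma(f(y))$ is continuous, so the classical Stone-Weierstrass theorem yields a polynomial $p(z,\bar{z})$ in $z$ and $\bar{z}$ approximating it within $\epsi$ on $K$. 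I would then set $v := p(f,f^*) \in \bar{\EeE}$ (its positive-degree terms lie in the $*$-algebra $\EeE$, its constant term is a scalar multiple of $1_X \in \bar{\EeE}$) and check, via the continuous functional calculus at the normal elements $f(x)$ and $f(y)$, that $\|v(z) - g(z)\| < \epsi$ for $z \in \{x,y\}$.

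Combining the two cases gives $g \in \Delta_2(\EeE)$, and \LEM{3} then delivers $g \in \bar{\EeE}$. The hard part will be the second case: I must manufacture \emph{one} element $v$ that simultaneously reproduces the two prescribed scalars, despite $f$ and $f^*$ failing to commute pointwise away from $x$ and $y$. The point that makes this work is the normality of $f(x)$ and $f(y)$: it renders the ordering of $f$ and $f^*$ inside $v$ irrelevant \emph{at those two points}, so that $v(x) = p(f(x),f(x)^*)$ and $v(y) = p(f(y),f(y)^*)$ coincide with honest functional calculus on the disjoint spectra, giving $\|v(x) - \phi(x) \cdot 1\| = \sup_{w \in \sigma(f(x))} |p(w,\bar{w}) - \phi(x)|$ and similarly at $y$; the behaviour of $v$ elsewhere on $X$ plays no role in the $\Delta_2$-condition.
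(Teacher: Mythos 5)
Your proposal is correct and follows essentially the same route as the paper: reduce via \LEM{3} to showing $g \in \Delta_2(\EeE)$, handle $(x,y)\in\RRr$ with a scalar multiple of $1_X$, and handle $(x,y)\notin\RRr$ by polynomially approximating, on the disjoint union $\sigma(f(x))\cup\sigma(f(y))$, the function taking the two prescribed values, then evaluating $p(f,f^*)$ at $x$ and $y$ via the continuous functional calculus at the normal elements. Your explicit remarks that $g$ commutes with $\EeE$ (so \LEM{3} applies), that $\Delta_2(\EeE)=\Delta_2(\bar{\EeE})$, and that the constant term of $p$ is absorbed using $1_X\in\bar{\EeE}$ are all points the paper treats implicitly, so no gap remains.
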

\begin{proof}
By \LEM{3}, we only need to check that $g \in \Delta_2(\EeE)$. We may assume that $g\dd X \to \RRR
\cdot 1$. Let $x$ and $y$ be arbitrary. Write $g(x) = \alpha \cdot 1$ and $g(y) = \beta \cdot 1$.
If $(x,y) \in \RRr$, then both $x$ and $y$ belong to the same equivalence class and hence $\alpha =
\beta$. Then $g(z) = (\alpha \cdot 1_X)(z)$ for $z \in \{x,y\}$ (and $\alpha \cdot 1_X \in
\bar{\EeE}$). Now assume that $(x,y) \notin \RRr$. Then---by assumption---there is $f \in \EeE$ such
that both $f(x)$ and $f(y)$ are normal and $\sigma(f(x)) \cap \sigma(f(y)) = \varempty$. Let
$\varphi\dd \CCC \to \RRR$ be a map such that $\varphi\bigr|_{\sigma(f(x))} \equiv \alpha$ and
$\varphi\bigr|_{\sigma(f(y))} \equiv \beta$. There is a sequence of polynomials
$p_1(z,\bar{z}),p_2(z,\bar{z}),\ldots$ which converge uniformly to $\varphi$ on $K := \sigma(f(x))
\cup \sigma(f(y))$. Then $p_n(f,f^*) \in \EeE$ and, for $w \in \{x,y\}$, $[p_n(f,f^*)](w) =
p_n(f(w),[f(w)]^*)$. Since $f(w)$ is normal and its spectrum is contained in $K$, we see that
$\lim_{n\to\infty} [p_n(f,f^*)](w) = \varphi(f(w))$. Now the notice that $\varphi(f(x)) = \alpha
\cdot 1 = g(x)$ and $\varphi(f(y)) = \beta \cdot 1 = g(y)$ finishes the proof.
\end{proof}

We recall that if $X$ is a compact Hausdorff space and $\RRr$ is a closed equivalence relation
on $X$, then the quotient topological space $X / \RRr$ is Hausdorff as well.

\begin{lem}{5}
Suppose $X$ is compact and there is a closed equivalence relation $\RRr$ on $X$ such that $M(f(x)) =
M(f(y))$ for each selfadjoint $f \in \EeE$ whenever $(x,y) \in \RRr$. Let $\pi\dd X \to X / \RRr$
denote the canonical projection, $f \in \bar{\EeE}$ be selfadjoint, $a$ and $b$ be two real numbers
and let $U = \{x \in X\dd\ a \cdot 1 < f(x) < b \cdot 1\}$. Then $\pi^{-1}(\pi(U)) = U$ and $\pi(U)$
is open in $X / \RRr$.
\end{lem}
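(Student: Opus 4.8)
The plan is to prove the two assertions separately, reducing everything to the behaviour of the scalar functions $x \mapsto M(f(x))$ and $x \mapsto \min\sigma(f(x))$. The first thing I would record is the elementary fact that, on the selfadjoint part of $\AAa$, the map $c \mapsto M(c) = \max\sigma(c)$ is $1$-Lipschitz: since $c - d \leqsl \|c-d\|\cdot 1$ gives $c \leqsl d + \|c-d\|\cdot 1$, one gets $M(c) \leqsl M(d) + \|c-d\|$, and symmetrically. Consequently, for any selfadjoint $h \in \bar{\EeE}$ both $x \mapsto M(h(x))$ and $x \mapsto \min\sigma(h(x)) = -M(-h(x))$ are continuous real functions on $X$. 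The genuine point of care—what I expect to be the main obstacle—is that $f$ lies in $\bar{\EeE}$ whereas the hypothesis concerns only members of $\EeE$; so I would first \emph{propagate} the equality $M(g(x)) = M(g(y))$ (for $(x,y) \in \RRr$) from selfadjoint $g \in \EeE$ to selfadjoint $g \in \bar{\EeE}$. Picking $g_k \in \EeE$ with $g_k \to g$ uniformly and replacing each $g_k$ by $\tfrac12(g_k + g_k^*)$, the values $g_k(x), g_k(y)$ converge in $\AAa$, so the Lipschitz continuity of $M$ yields $M(g(x)) = \lim_k M(g_k(x)) = \lim_k M(g_k(y)) = M(g(y))$.

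For the openness of $U$ I would simply write $U = \{x \in X \dd \min\sigma(f(x)) > a\} \cap \{x \in X \dd M(f(x)) < b\}$; each factor is the preimage of an open half-line under a continuous function, so $U$ is open in $X$.

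The inclusion $U \subseteq \pi^{-1}(\pi(U))$ is automatic, so to obtain $\pi^{-1}(\pi(U)) = U$ it suffices to show $U$ is saturated, i.e. that $(x,y) \in \RRr$ together with $x \in U$ forces $y \in U$. Here I would invoke the closure version of the hypothesis twice: applied to the selfadjoint $f \in \bar{\EeE}$ it gives $M(f(y)) = M(f(x)) < b$, and applied to $-f \in \bar{\EeE}$ it gives $\min\sigma(f(y)) = -M(-f(y)) = -M(-f(x)) = \min\sigma(f(x)) > a$. Hence $\sigma(f(y)) \subset (a,b)$, that is, $y \in U$. The only thing to notice is that membership in $U$ controls both ends of $\sigma(f(\cdot))$, which is exactly why one must use the hypothesis for $f$ and for $-f$.

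Finally, $\pi(U)$ is open in $X/\RRr$ by the definition of the quotient topology: a subset of $X/\RRr$ is open precisely when its $\pi$-preimage is open in $X$, and we have just shown $\pi^{-1}(\pi(U)) = U$, which is open. This completes the argument; note that the closedness of $\RRr$ (needed only to ensure $X/\RRr$ is Hausdorff) plays no role in this particular lemma.
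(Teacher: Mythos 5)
Your proof is correct and follows essentially the same route as the paper's: reduce the openness of $\pi(U)$ to the saturation $\pi^{-1}(\pi(U)) = U$, and verify saturation by applying the hypothesis to both $f$ and $-f$ so as to control $\max\sigma(f(\cdot))$ and $\min\sigma(f(\cdot))$ simultaneously. The only difference is that you spell out two points the paper leaves implicit — the $1$-Lipschitz continuity of $M$ used to propagate the hypothesis from $\EeE$ to $\bar{\EeE}$ and to see that $U$ is open — which is a welcome clarification rather than a deviation.
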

\begin{proof}
Recall that $\pi(U)$ is open in $X / \RRr$ iff $\pi^{-1}(\pi(U))$ is open in $X$. Therefore
it suffices to show that $\pi^{-1}(\pi(U)) = U$. Of course, the inclusion `$\supset$' is immediate.
And if $y \in \pi^{-1}(\pi(U))$, then there is $x \in U$ such that $(x,y) \in \RRr$. We then have
$a \cdot 1 < f(x) < b \cdot 1$, $M(f(x)) = M(f(y))$ and $M(-f(x)) = M(-f(y))$ (the last two
relations follows from the fact that $f \in \bar{\EeE}$). The first of these relations says that
$[-M(-f(x)),M(f(x))] \subset (a,b)$ from which we infer that $[-M(-f(y)),M(f(y))] \subset (a,b)$ and
consequently $y \in U$.
\end{proof}

The following is a special case of \THM{SW-full}.

\begin{lem}{6}
Suppose $X$ is compact, $1_X \in \bar{\EeE}$ and for any $x, y \in X$ one of conditions
\textup{(AX1)--(AX2)} is fulfilled. Then $\Delta_2(\EeE) = \bar{\EeE}$.
\end{lem}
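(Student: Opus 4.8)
The inclusion $\bar{\EeE} \subseteq \Delta_2(\EeE)$ is immediate, since a uniform limit of members of $\EeE$ certainly satisfies the two-point approximation property defining $\Delta_2(\EeE)$; so the whole content is the reverse inclusion. As $\Delta_2(\EeE)$ and $\bar{\EeE}$ are both $*$-algebras, it suffices to approximate an arbitrary selfadjoint $u \in \Delta_2(\EeE)$ by members of $\bar{\EeE}$. First I would repackage the dichotomy (AX1)--(AX2) as an equivalence relation. Because $a \mapsto M(a)$ is $1$-Lipschitz on selfadjoint elements, the function $x \mapsto M(f(x))$ is continuous for selfadjoint $f$, so setting $(x,y) \in \RRr$ iff $M(f(x)) = M(f(y))$ for every selfadjoint $f \in \EeE$ defines a closed equivalence relation on $X$, and by (AX1)--(AX2) any two points lying outside $\RRr$ are spectrally separated by $\EeE$. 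Thus $\RRr$ is exactly the kind of relation handled by \LEM{4} and \LEM{5}: the quotient $Y := X/\RRr$ is compact Hausdorff, every scalar map $X \to \CCC \cdot 1$ constant on $\RRr$-classes lies in $\bar{\EeE}$ (\LEM{4}), and for selfadjoint $f \in \bar{\EeE}$ the sets $\{x\dd\ a \cdot 1 < f(x) < b \cdot 1\}$ are $\RRr$-saturated and project onto open subsets of $Y$ (\LEM{5}).

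The plan is then a partition-of-unity argument on $Y$. Fix $\epsi > 0$ and suppose I can prove the local statement: for every $\RRr$-class $C$ there are $g_C \in \bar{\EeE}$ and an $\RRr$-saturated open set $W_C \supseteq C$ with $\|u(z) - g_C(z)\| < \epsi$ for all $z \in W_C$. Projecting the $W_C$ to $Y$ yields an open cover of the compact space $Y$; extract a finite subcover indexed by $C_1,\dots,C_m$. Since $Y$ is compact Hausdorff, hence normal, there is a partition of unity $\tilde{\varphi}_1,\dots,\tilde{\varphi}_m$ on $Y$ subordinate to this cover, and its pull-backs $\varphi_j := (\tilde{\varphi}_j \circ \pi)\cdot 1$ are scalar maps constant on $\RRr$-classes, hence lie in $\bar{\EeE}$ by \LEM{4}. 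The element $g := \sum_{j=1}^m \varphi_j g_{C_j}$ then belongs to $\bar{\EeE}$, and since each $\varphi_j$ is a central nonnegative scalar function with $\sum_j \varphi_j = 1_X$ and with $\varphi_j(z) \neq 0$ forcing $z \in W_{C_j}$ (here $\RRr$-saturation of $W_{C_j}$ is used), the pointwise bound $\|u(z) - g(z)\| \leqsl \sum_j \varphi_j(z)\,\|u(z) - g_{C_j}(z)\| < \epsi$ gives $\|u - g\| \leqsl \epsi$. Letting $\epsi \to 0$ yields $u \in \bar{\EeE}$.

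The heart of the matter is thus the local statement, and I expect it to be the main obstacle. The model is \LEM{3}: were $u$ to commute with every member of $\EeE$, then \LEM{1} would provide, at each point, a selfadjoint member of $\bar{\EeE}$ sandwiching $u$ to within $\epsi$ and agreeing with it there, and the approximate supremum $[\sum_j (g_j + c\cdot 1_X)^n]^{1/n} - c\cdot 1_X$ of \LEM{2} (resting on the Loewner--Heinz inequality \THM{l-h}) would patch these pieces into a global approximant inside $\bar{\EeE}$. For a general selfadjoint $u$ the difficulty is precisely non-commutativity: the operator ``maximum'' of non-commuting selfadjoint elements need not respect an upper bound, so the patching of \LEM{2}--\LEM{3} does not apply verbatim. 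The route I would take is to localize along a class before patching: on a single $\RRr$-class $C$ no pair of points is spectrally separated, and for every selfadjoint $f \in \EeE$ and real polynomial $p$ one has $p(f) \in \EeE$ with $M(p(f)(x)) = \max p(\sigma(f(x)))$ constant in $x \in C$, which forces $\sigma(f(x))$ to be independent of $x \in C$. I would aim to exploit this spectral rigidity of the evaluations $f \mapsto f(x)\ (x \in C)$ to produce, combining the pointwise control of \LEM{1} with the commutative patching of \LEM{2}--\LEM{3} now applied only across such a class, a genuine single-element approximation of $u$ on an $\RRr$-saturated neighbourhood of $C$. Reconciling the merely two-point data defining $\Delta_2(\EeE)$ with the non-commutativity of $\AAa$ at this local step is where the real work lies; once it is done, the partition-of-unity assembly above completes the proof, and \LEM{6} is the compact, unital instance of \THM{SW-full}.
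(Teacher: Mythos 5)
Your global assembly is essentially the paper's: the same closed relation $\RRr$, the same use of \LEM{4} to pull back a partition of unity from $X/\RRr$, and the same observation that a convex combination with central scalar coefficients supported in saturated sets preserves pointwise estimates. But you have explicitly left unproved the one statement that carries all the difficulty, namely the ``local statement'': that for each $\RRr$-class $C$ there exist $g_C \in \bar{\EeE}$ and an $\RRr$-saturated open $W_C \supseteq C$ with $\|u - g_C\| < \epsi$ on $W_C$. The route you sketch for it --- exploiting spectral rigidity of the evaluations along a class and re-running the commutative patching of \LEM{2}--\LEM{3} class by class --- is not how the paper proceeds, and it is not clear it can be made to work: the evaluations at distinct points of a class are a priori unrelated beyond having equal numbers $M(p(f(x)))$, and $\Delta_2(\EeE)$ only gives two-point data.

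The paper's actual mechanism for the local step is different and is the idea you are missing. Apply \LEM{1} at $x$ \emph{twice}, obtaining selfadjoint $p, q \in \bar{\EeE}$ with $p(x) = f(x) = q(x)$, $p \leqsl f + \delta\cdot 1_X$ globally and $f \leqsl q + \delta\cdot 1_X$ globally. By continuity, on some neighbourhood $G_x$ of $x$ one also has $q - \delta\cdot 1 < f < p + \delta\cdot 1$, hence $-2\delta\cdot 1 < p - q < 2\delta\cdot 1$ on $G_x$. Now set $D := \{z \dd\ -2\delta\cdot 1 < (p-q)(z) < 2\delta\cdot 1\}$. Since $p - q$ is a selfadjoint element of $\bar{\EeE}$, \LEM{5} applies to $D$: it is an \emph{$\RRr$-saturated} open set containing $G_x$, hence the whole class of $x$. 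On $D$ the two global one-sided bounds combine with $p > q - 2\delta\cdot 1$ to give $f - 3\delta\cdot 1_X < p \leqsl f + \delta\cdot 1_X$, i.e.\ $\|p - f\| \leqsl 3\delta$ on $D$. This is precisely your local statement (with $g_C = p$, $W_C = D$, $\epsi = 3\delta$), obtained not from any rigidity along the class but from the trick of encoding the good neighbourhood as a spectral window $\{a\cdot 1 < h < b\cdot 1\}$ of a single selfadjoint $h \in \bar{\EeE}$, which \LEM{5} automatically saturates. (The paper then finishes with the one-sided version of your convex-combination estimate, but your symmetric norm version works equally well once the local step is in place.) Without this step your argument is a reduction, not a proof.
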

\begin{proof}
We only need to show that $\Delta_2(\EeE)$ is contained in $\bar{\EeE}$. Let $f \in \Delta_2(\EeE)$
be selfadjoint and let $\delta > 0$. We shall construct $w \in \bar{\EeE}$ such that $\|w - f\|
\leqsl 3 \delta$. By \LEM{1}, for each $x \in X$ there are functions $u_x, v_x \in \bar{\EeE}$ such
that $u_x(x) = f(x) = v_x(x)$ and $u_x - \delta \cdot 1_X < f < v_x + \delta \cdot 1_X$. Let $G_x
\subset X$ consist of all $y \in X$ such that $v_x(y) - \delta \cdot 1 < f(y) < u_x(y) + \delta
\cdot 1$. Since $x \in G_x$ and $X$ is compact, there is a finite system $x_1,\ldots,x_k \in X$ for
which $X = \bigcup_{j=1}^k G_{x_j}$. For simplicity, we put $W_j = G_{x_j}$, $p_j = u_{x_j}$ and
$q_j = v_{x_j}$. Observe that then
\begin{equation}\label{eqn:3}
p_j(x) - \delta \cdot 1 < f(x) < q_j(x) + \delta \cdot 1 \quad \textup{for any } x \in X
\end{equation}
and
\begin{equation}\label{eqn:4}
q_j(x) - \delta \cdot 1 < f(x) < p_j(x) + \delta \cdot 1 \quad \textup{for any } x \in W_j.
\end{equation}
Let $D_j$ consist of all $x \in X$ such that $-2\delta \cdot 1 < p_j(x) - q_j(x) < 2\delta \cdot 1$.
We infer from \eqref{eqn:3} and \eqref{eqn:4} that $W_j \subset D_j$ and thus $X = \bigcup_{j=1}^k
D_j$. Further, let $\RRr$ be an equivalence relation on $X$ given by the rule: $(x,y) \in \RRr \iff
M(u(x)) = M(u(y))$ for each selfadjoint $u \in \EeE$. It follows from the definition of $\RRr$ that
$\RRr$ is closed in $X \times X$. Denote by $\pi\dd X \to X / \RRr$ the canonical projection.
We deduce from \LEM{5} that the sets $\pi(D_1),\ldots,\pi(D_k)$ form an open cover of the space $X /
\RRr$ (which is compact and Hausdorff). Now let $\beta_1,\ldots,\beta_k\dd X / \RRr \to [0,1]$ be
a partition of unity such that $\beta_j^{-1}((0,1]) \subset \pi(D_j)$ for $j=1,\ldots,k$. Put
$\alpha_j = (\beta_j \circ \pi) \cdot 1\dd X \to \CCC \cdot 1 \subset \AAa$. \LEM{4} combined with
conditions (AX1)--(AX2) yields that $\alpha_1,\ldots,\alpha_k \in \bar{\EeE}$. Define $w \in
\bar{\EeE}$ by $w = \sum_{j=1}^k \alpha_j p_j$. Since $\sum_{j=1}^k \alpha_j = 1_X$, we conclude
from \eqref{eqn:3} that $w \leqsl f + \delta \cdot 1_X$. So, to end the proof, it is enough to check
that $f(x) \leqsl w(x) + 3\delta \cdot 1$ for each $x \in X$. This inequality will be satisfied
provided
\begin{equation}\label{eqn:5}
\alpha_j(x) (f(x) - 3\delta \cdot 1) \leqsl \alpha_j(x) p_j(x)
\end{equation}
for any $j$. We consider two cases. If $x \in D_j$, then $p_j(x) > q_j(x) - 2\delta \cdot 1 > f(x)
- 3\delta \cdot 1$ (by \eqref{eqn:3}) and consequently \eqref{eqn:5} holds. Finally, if $x \notin
D_j$, then $\pi(x) \notin \pi(D_j)$ (see \LEM{5}) and therefore $\alpha_j(x) = 0$, which easily
gives \eqref{eqn:5}.
\end{proof}

\begin{proof}[Proof of \THM{SW-full}]
We only need to check that $\Delta_2(\EeE) \subset \bar{\EeE}$. We consider two cases.\par
First assume $X$ is compact. Let $\EeE' = \EeE + \CCC \cdot 1_X$. Observe that $\EeE'$ is
a $*$-algebra and for any two points $x$ and $y$ one of conditions (AX1)--(AX2) is fulfilled with
$\EeE$ replaced by $\EeE'$. Consequently, it follows from \LEM{6} that $\overline{\EeE'} =
\Delta_2(\EeE')$. But $\overline{\EeE'} = \bar{\EeE} + \CCC \cdot 1_X$. So, for any $g \in
\Delta_2(\EeE)$ we clearly have $g \in \Delta_2(\EeE')$ and hence $g = f + \lambda \cdot 1_X$ for
some $f \in \bar{\EeE}$ and $\lambda \in \CCC$. If $\lambda = 0$, then $g = f \in \bar{\EeE}$ and
we are done. Otherwise, $1_X = \frac{1}{\lambda} (g - f) \in \Delta_2(\EeE)$ which implies that
the assumptions of \LEM{0} are satisfied. We infer from that lemma that $1_X \in \bar{\EeE}$ and
therefore $g \in \bar{\EeE}$ as well.\par
Now assume $X$ is noncompact. Let $\widehat{X} = X \cup \{\infty\}$ be the one-point
compactification of $X$. Every function $f \in \CCc_0(X,\AAa)$ admits a unique continuous extension
$\widehat{f}\dd \widehat{X} \to \AAa$, given by $\widehat{f}(\infty) = 0$. Denote
by $\widehat{\EeE}$ the $*$-subalgebra of $\CCc(\widehat{X},\AAa)$ constisting of all extensions
of (all) functions from $\EeE$. We claim that for any $x, y \in \widehat{X}$ one of conditions
(AX1)--(AX2) is fulfilled with $\EeE$ replaced by $\widehat{\EeE}$. Indeed, if both $x$ and $y$
differ from $\infty$, this follows from our assumptions about $\EeE$. And if, for example, $y =
\infty \neq x$, condition (AX0) implies that either $M(\widehat{f}(x)) = M(\widehat{f}(y))$ for each
$f \in \EeE$ or $\widehat{u}(x)$ is invertible in $\AAa$ for some $u \in \EeE$. But then $f = u^*u
\in \EeE$ is normal and $0 \notin \sigma(\widehat{f}(x))$, while $\sigma(\widehat{f}(y)) = \{0\}$,
which shows that $x$ and $y$ are spectrally separated by $\widehat{\EeE}$. So, it follows from
the first part of the proof that the closure of $\widehat{\EeE}$ in $\CCc(\widehat{X},\AAa)$
coincides with $\Delta_2(\widehat{\EeE})$. But the closure of $\widehat{\EeE}$ coincides with
$\{\widehat{f}\dd\ f \in \bar{\EeE}\}$ and $\Delta_2(\widehat{\EeE}) = \{\widehat{f}\dd\ f \in
\Delta_2(\EeE)\}$. We infer from these that $\Delta_2(\EeE) = \bar{\EeE}$ and the proof is complete.
\end{proof}

\begin{proof}[Proof of \PRO{SW-classic}]
The necessity of the condition is clear (since for any two distinct points $x$ and $y$ in $X$ and
any elements $a$ and $b$ of $\AAa$ there is a function $f \in \CCc_0(X,\AAa)$ such that $f(x) = a$
and $f(y) = b$). To prove the sufficiency, assume $\EeE$ spectrally separates points of $X$ and for
each $x \in X$ the set $\EeE(x)$ is dense in $\AAa$. First notice that then for each $x \in X$ there
is $f \in \EeE$ such that $f(x)$ is invertible in $\AAa$. This shows that all assumptions
of \THM{SW-full} are satisfied. According to that result, we only need to show that for any two
distinct points $x$ and $y$ of $X$ the set $L := \{(f(x),f(y))\dd\ f \in \EeE\}$ is dense in $\AAa
\times \AAa$. Since $x$ and $y$ are spectrally separated by $\EeE$, the proof of \LEM{4} shows that
$(1,0),(0,1) \in \bar{L}$. Further, since both $\EeE(x)$ and $\EeE(y)$ are dense in $\AAa$,
we conclude that $\{f(x)\dd\ f \in \bar{\EeE}\} = \{f(y)\dd\ f \in \bar{\EeE}\} = \AAa$ and
therefore for arbitrary two elements $a$ and $b$ of $\AAa$ there are $u,v \in \bar{\EeE}$ for which
$u(x) = a$ and $v(y) = b$. Then $(a,b) = (u(x),u(y)) \cdot (1,0) + (v(x),v(y)) \cdot (0,1) \in
\bar{L}$ (we use here the coordinatewise multiplication) and we are done.
\end{proof}

Taking into account \PRO{SW-classic}, the following question may be interesting.

\begin{prb}{1}
Let $\AAa$ be a $C^*$-algebra without unit and let $X$ be a locally compact Hausdorff space. Is
it true that a $*$-subalgebra $\EeE$ of $\CCc_0(X,\AAa)$ is dense in $\CCc_0(X,\AAa)$ iff for any
two points $x$ and $y$ of $X$ the set $\{(f(x),f(y))\dd\ f \in \EeE\}$ is dense in $\AAa \times
\AAa$~?
\end{prb}

\SECT{Topological $n$-spaces}

In Fell's characterization of homogeneous $C^*$-algebras \cite{fe3} (consult also Theorem~IV.1.7.23
in \cite{bla}) special fibre bundles appear. To make our lecture as simple and elementary
as possible, we avoid this language and instead of using fibre bundles we shall introduce so-called
\textit{$n$-spaces} (see \DEF{n-sp} below). To this end, let $M_n$ be the $C^*$-algebra of all
complex $n \times n$-matrices. Let $\UUu_n$ be the unitary group of $M_n$ and $I$ be its neutral
element. Let $\TTT = \{z \in \CCC\dd\ |z| = 1\}$. Let $\Uu_n$ denote the compact topological group
$\UUu_n / (\TTT \cdot I)$ and let $\pi_n\dd \UUu_n \to \Uu_n$ be the canonical homomorphism. Members
of $\Uu_n$ will be denoted by $\uU$ and $\vV$, and $\jJ$ is reserved for the neutral element
of $\Uu_n$. The (probabilistic) Haar measure on $\Uu_n$ will be denoted by $\dint{\uU}$. For any
$A \in M_n$ and $\uU \in \Uu_n$ let $\uU.A$ denote the matrix $U A U^{-1}$ where $U \in \UUu_n$ is
such that $\pi_n(U) = \uU$. It is easily seen that the function
$$\Uu_n \times M_n \ni (\uU,A) \mapsto \uU.A \in M_n$$
is a well defined continuous action of $\Uu_n$ on $M_n$ (which means that $\jJ.A = A$ and
$\uU.(\vV.A) = (\uU\vV).A$ for any $\uU,\vV \in \Uu_n$ and $A \in M_n$). More generally, for any
$C^*$-algebra $\AAa$ let $M_n(\AAa)$ be the algebra of all $n \times n$-matrices with entries
in $\AAa$. ($M_n(\AAa)$ may naturally be identified with $\AAa \otimes M_n$.) For any matrix $A \in
M_n(\AAa)$ and each $\uU \in \Uu_n$, $\uU.A$ is defined as $U A U^{-1}$ where $U \in \UUu_n$ is such
that $\pi_n(U) = \uU$, and $U A U^{-1}$ is computed in a standard manner.

\begin{dfn}{n-sp}
A pair $(X,.)$ is said to be an \textit{$n$-space} if $X$ is a locally compact Hausdorff space and
$\Uu_n \times X \ni (\uU,x) \mapsto \uU.x \in X$ is a continuous \textbf{free} action of $\Uu_n$
on $X$. Recall that the action is free iff the equality $\uU.x = x$ (for some $x \in X$) implies
$\uU = \jJ$.\par
Let $(X,.)$ be an $n$-space. Let $C^*(X,.)$ be the $*$-algebra of all maps $f \in \CCc_0(X,M_n)$
such that $f(\uU.x) = \uU.f(x)$ for any $\uU \in \Uu_n$ and $x \in X$. $C^*(X,.)$ is
a $C^*$-subalgebra of $\CCc_0(X,M_n)$.\par
By a \textit{morphism} between two $n$-spaces $(X,.)$ and $(Y,*)$ we mean any proper map $\psi\dd X
\to Y$ such that $\psi(\uU.x) = \uU * \psi(x)$ for any $\uU \in \Uu_n$ and $x \in X$. (A map is
proper if the inverse images of compact sets under this map are compact.) A morphism which is
a homeomorphism is said to be an \textit{isomorphism}. Two $n$-spaces are \textit{isomorphic}
if there exists an isomorphism between them.
\end{dfn}

The reader should notice that the (natural) action of $\Uu_n$ on $M_n$ is \underline{not} free.
However, one may check that the set $\Mm_n$ of all irreducible matrices $A \in M_n$ (that is, $A \in
\Mm_n$ iff every matrix $X \in M_n$ which commutes with both $A$ and $A^*$ is of the form
$\lambda I$ where $\lambda \in \CCC$) is open in $M_n$ (and thus $\Mm_n$ is locally compact) and
the action $\Uu_n \times \Mm_n \ni (\uU,A) \mapsto \uU.A \in \Mm_n$ is free, which means that
$(\Mm_n,.)$ is an $n$-space.\par
In this section we establish basic properties of $C^*$-algebras of the form $C^*(X,.)$ where $(X,.)$
is an $n$-space. To this end, recall that whenever $(\Omega,\Mm,\mu)$ is a finite measure space and
$f\dd \Omega \ni \omega \mapsto (f_1(\omega),\ldots,f_k(\omega)) \in \CCC^k$ is an $\Mm$-measurable
(which means that $f^{-1}(U) \in \Mm$ for every open set $U \subset \CCC^k$) bounded function, then
$\int_{\Omega} f(\omega) \dint{\mu(\omega)}$ is (well) defined as $(\int_{\Omega} f_1(\omega)
\dint{\mu(\omega)},\ldots,\int_{\Omega} f_k(\omega) \dint{\mu(\omega)})$. If $\|\cdot\|$ is any norm
on $\CCC^k$, then $\|\int_{\Omega} f(\omega) \dint{\mu(\omega)}\| \leqsl \int_{\Omega} \|f(\omega)\|
\dint{\mu(\omega)}$. In particular, the above rules apply to matrix-valued measurable functions.\par
From now on, $n \geqsl 1$ and an $n$-space $(X,.)$ are fixed. A set $A \subset X$ is said to be
\textit{invariant} provided $\uU.a \in A$ for any $\uU \in \Uu_n$ and $a \in A$. Observe that if $A$
is closed or open and $A$ is invariant, then $A$ is locally compact and consequently $(A,.)$ is
an $n$-space (when the action of $\Uu_n$ is restricted to $A$). We begin with

\begin{lem}{ext}
For each $f \in \CCc_0(X,M_n)$ let $f^{\Uu}\dd X \to M_n$ be given by:
$$f^{\Uu}(x) = \int_{\Uu_n} \uU^{-1}.f(\uU.x) \dint{\uU} \qquad (x \in X).$$
\begin{enumerate}[\upshape(a)]
\item For any $f \in \CCc_0(X,M_n)$, $f^{\Uu} \in C^*(X,.)$.
\item If $f \in \CCc_0(X,M_n)$ and $x \in X$ are such that $f(\uU.x) = \uU.f(x)$ for any $\uU \in
   \Uu_n$, then $f^{\Uu}(x) = f(x)$.
\item Let $A \subset X$ be a closed invariant nonempty set. Every map $g\in C^*(A,.)$ extends
   to a map $\widetilde{g} \in C^*(X,.)$ such that $\sup_{a \in A} \|g(a)\| = \sup_{x \in X}
   \|\widetilde{g}(x)\|$.
\item For any $x \in X$ and $A \in M_n$ there is $f \in C^*(X,.)$ with $f(x) = A$.
\item Let $x$ and $y$ be two points of $X$ such that there is no $\uU \in \Uu_n$ for which $\uU.x =
   y$. Then for any $A, B \in M_n$ there is $f \in C^*(X,.)$ such that $f(x) = A$ and $f(y) = B$.
\item $C^*(X,.)$ has a unit iff $X$ is compact.
\end{enumerate}
\end{lem}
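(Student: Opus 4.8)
The plan is to establish parts (a) and (b) first, since the remaining parts are all built on the averaging operator $f\mapsto f^{\Uu}$. For (a) I would check the three requirements separately. Equivariance follows from the right\hyp{}invariance of the Haar measure: replacing $\uU$ by $\uU\vV^{-1}$ in the defining integral sends $f^{\Uu}(\vV.x)$ to $\int_{\Uu_n}(\vV\uU^{-1}).f(\uU.x)\dint{\uU}$, which equals $\vV.f^{\Uu}(x)$ by the action identity $(\vV\uU^{-1}).B=\vV.(\uU^{-1}.B)$ and the linearity of $B\mapsto\vV.B$. That $f^{\Uu}$ vanishes at infinity uses that conjugation by a unitary is an isometry, so $\|f^{\Uu}(x)\|\leqsl\int_{\Uu_n}\|f(\uU.x)\|\dint{\uU}$; given a compact $K$ off which $\|f\|<\epsi$, the saturation $\Uu_n.K$ is compact (a continuous image of $\Uu_n\times K$) and off it $\uU.x\notin K$ for every $\uU$, whence $\|f^{\Uu}(x)\|<\epsi$. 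Continuity of $f^{\Uu}$ holds because the integrand $(\uU,x)\mapsto\uU^{-1}.f(\uU.x)$ is jointly continuous and $\Uu_n$ is compact, so it is uniformly continuous over $\Uu_n$ times any compact neighbourhood, and the integral therefore depends continuously on $x$. Part (b) is immediate: if $f(\uU.x)=\uU.f(x)$ for all $\uU$, then $\uU^{-1}.f(\uU.x)=f(x)$ identically in $\uU$, and the Haar measure is probabilistic, so the integral equals $f(x)$.

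For part (c), which I expect to be the main obstacle, I would produce a norm\hyp{}controlled continuous extension and then symmetrize it. Set $M=\sup_{a\in A}\|g(a)\|$. Passing to the one\hyp{}point compactification $\widehat{X}$ (which is compact Hausdorff, hence normal) and assigning the value $0$ at $\infty$, the map $g$ becomes a continuous $M_n$\hyp{}valued map on the closed set $\widehat{A}=A\cup\{\infty\}$; a coordinatewise Tietze extension gives a continuous $h_0\dd\widehat{X}\to M_n$ with $h_0|_{\widehat{A}}=g$. To restore the operator\hyp{}norm bound I would compose with the radial retraction $\rho$ onto the norm ball of radius $M$ (namely $\rho(B)=B$ if $\|B\|\leqsl M$ and $\rho(B)=\tfrac{M}{\|B\|}B$ otherwise), which is continuous, fixes all $\widehat{A}$\hyp{}values, and sends $0$ to $0$; thus $h:=\rho\circ h_0$ restricts to an element of $\CCc_0(X,M_n)$ with $\|h\|\leqsl M$ and $h|_A=g$. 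Finally put $\widetilde{g}=h^{\Uu}$. By part (a), $\widetilde{g}\in C^*(X,.)$; since $A$ is invariant and $g$ is equivariant, $h(\uU.a)=\uU.g(a)$ for $a\in A$, so part (b) gives $\widetilde{g}|_A=g$. The estimate $\|\widetilde{g}(x)\|\leqsl\int_{\Uu_n}\|h(\uU.x)\|\dint{\uU}\leqsl M$, together with the trivial reverse inequality (an extension cannot have smaller supremum than what it extends), yields $\sup_{x\in X}\|\widetilde{g}(x)\|=M$.

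Parts (d)--(f) then follow quickly from the freeness of the action. For a point $x$ the orbit $\Uu_n.x$ is compact (a continuous image of $\Uu_n$), hence closed, and invariant; freeness makes the orbit map $\uU\mapsto\uU.x$ a continuous bijection from a compact space onto a Hausdorff one, so a homeomorphism, and I would define $g(\uU.x)=\uU.A$ on $\Uu_n.x$. This $g$ is well defined and continuous (by freeness and that homeomorphism) and equivariant, so $g\in C^*(\Uu_n.x,.)$; extending it by part (c) gives $f\in C^*(X,.)$ with $f(x)=A$, which is (d). For (e) the hypothesis says $\Uu_n.x$ and $\Uu_n.y$ are distinct, hence disjoint, orbits, so their union is a closed invariant set which, being the disjoint union of two compact pieces, carries the continuous equivariant map sending $\uU.x\mapsto\uU.A$ and $\uU.y\mapsto\uU.B$; part (c) extends it. For (f), if $X$ is compact then the constant map $1_X\equiv I$ lies in $C^*(X,.)$ and is a unit; conversely, if $e$ is a unit then $e(x)B=B$ for every $B\in M_n$ by (d), forcing $e(x)=I$ and $\|e(x)\|=1$ for all $x$, which is incompatible with $e$ vanishing at infinity unless $X$ is compact.
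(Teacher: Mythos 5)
Your proposal is correct and follows essentially the same route as the paper: equivariance of $f^{\Uu}$ via the change of variables in the Haar integral, a norm-controlled Tietze extension through the one-point compactification followed by averaging for (c), and orbit-wise definitions combined with (c) for (d)--(f). The only difference is that you spell out details the paper leaves implicit (the radial retraction to preserve the sup norm, and the converse direction of (f)), which does not change the argument.
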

\begin{proof}
It is clear that $f^{\Uu}$ is continuous for every $f \in \CCc_0(X,M_n)$. Further, if $K \subset X$
is a compact set such that $\|f(x)\| \leqsl \epsi$ for each $x \in X \setminus K$, then
$\|f^{\Uu}(z)\| \leqsl \epsi$ for any $z \in X \setminus \Uu_n.K$ where $\Uu_n.K = \{\uU.x\dd\ \uU
\in \Uu_n,\ x \in K\}$. The note that $\Uu_n.K$ is compact leads to the conclusion that $f^{\Uu} \in
\CCc_0(X,M_n)$. Finally, for any $\vV \in \Uu_n$, any representant $V \in \UUu_n$ of $\vV$ and each
$x \in X$, we have
\begin{multline*}
f^{\Uu}(\vV.x) = \int_{\Uu_n} \uU^{-1}.f(\uU\vV.x) \dint{\uU} = \int_{\Uu_n}
(\uU\vV^{-1})^{-1}.f(\uU.x) \dint{\uU}\\= \int_{\Uu_n} \vV.[\uU^{-1}.f(\uU.x)] \dint{\uU} =
\int_{\Uu_n} V [\uU^{-1}.f(\uU.x)] V^{-1} \dint{\uU}\\= V \cdot \Bigl(\int_{\Uu_n}
\uU^{-1}.f(\uU.x) \dint{\uU}\Bigr) \cdot V^{-1} = \vV.f^{\Uu}(x),
\end{multline*}
which proves (a). Point (b) is a simple consequence of the definition of $f^{\Uu}$. Further, if $g$
is as in (c), it follows from Tietze's type theorem that there is $G \in \CCc_0(X,M_n)$ which
extends $g$ and satisfies $\sup_{a \in A} \|g(a)\| = \sup_{x \in X} \|G(x)\|$ (if $X$ is noncompact,
consider the one-point compactification $\widehat{X} = X \cup \{\infty\}$ of $X$ and note that then
the set $\widehat{A} = A \cup \{\infty\}$ is closed in $\widehat{X}$ and $g$ extends continuously
to $\widehat{A}$\,). Then $\widetilde{g} = G^{\Uu}$ is a member of $C^*(X,.)$ (by (a)) which
we searched for (see (b)).\par
We turn to (d) and (e). Let $K = \Uu_n.\{x\}$ and $f_0\dd K \to M_n$ be given by $f_0(\uU.x) =
\uU.A\ (\uU \in \Uu_n)$. Since the action of $\Uu_n$ on $X$ is free, $f_0$ is a well defined map.
Since $K$ is compact, (c) yields the existence of $f \in C^*(X,.)$ which extends $f_0$. To prove
(e), we argue similarly: put $L = \Uu_n.\{x,y\}$ and let $g_0\dd L \to M_n$ be given by $g_0(\uU.x)
= \uU.A$ and $g_0(\uU.y) = \uU.B\ (\uU \in \Uu_n)$. We infer from the assumption of (e) that $g_0$
is a well defined map. Consequently, since $L$ is compact, there exists, by (c), a map $g \in
C^*(X,.)$ which extends $g_0$. This finishes the proof of (e), while point (f) immediately
follows from (d).
\end{proof}

\begin{pro}{repr}
\begin{enumerate}[\upshape(a)]
\item For every closed two-sided ideal $\IiI$ in $C^*(X,.)$ there exists
   a \textup{(}unique\textup{)} closed invariant set $A \subset X$ such that $\IiI$ coincides with
   the ideal $\IiI_A$ of all functions $f \in C^*(X,.)$ which vanish on $A$. Moreover, $C^*(X,.) /
   \IiI$ is ``naturally'' isomorphic to $C^*(A,.)$.
\item Let $k \leqsl n$ and let $\pi\dd C^*(X,.) \to M_k$ be a nonzero representation. Then $k =
   n$ and there is a unique point $x \in X$ such that $\pi(f) = f(x)$ for $f \in C^*(X,.)$.
\item Let $(Y,*)$ be an $n$-space. For every $*$-homomorphism $\Phi\dd (X,.) \to (Y,*)$ there is
   a unique pair $(U,\varphi)$ where $U$ is an open invariant subset of $Y$, $\varphi\dd (U,*) \to
   (X,.)$ is a morphism of $n$-spaces and
   \begin{equation}\label{eqn:homo}
   [\Phi(f)](y) = \begin{cases}
                  f(\varphi(y)) & \textup{if } y \in U\\
                  0             & \textup{if } y \notin U.
                  \end{cases}
   \end{equation}
   In particular, $C^*(X,.)$ and $C^*(Y,*)$ are isomorphic iff so are $(X,.)$ and $(Y,*)$.
\end{enumerate}
\end{pro}
\begin{proof}
The uniqueness of $A$ in (a) follows from point (e) of \LEM{ext}. To show its existence, let $A$
consist of all $x \in X$ such that $f(x) = 0$ for any $f \in \IiI$. It is clear that $A$ is closed
and invariant, and that $\IiI \subset \IiI_A$. To prove the converse inclusion, we shall involve
\THM{SW-full} for $\EeE = \IiI$. First of all, it follows from point (d) of \LEM{ext} that for each
$x \in X$ the set $\IiI(x) := \{f(x)\dd\ f \in \IiI\}$ is a two-sided ideal in $M_n$. Since $\{0\}$
is the only proper ideal of $M_n$, we conclude that $\IiI(x) = \{0\}$ for $x \in A$ and $\IiI(x) =
M_n$ for $x \in X \setminus A$. This shows that condition (AX0) of \THM{SW-full} is satisfied.
Further, if $x$ and $y$ are arbitrary points of $X$, then either
\begin{itemize}
\item $x, y \in A$; in that case (AX2) is fulfilled; or
\item $x \in A$ and $y \notin A$ (or conversely); in that case there is $f \in \IiI$ such that $f(y)
   = I$, and $f(x) = 0$ (since $x \in A$)---this implies that $x$ and $y$ are spectrally separated
   by $\IiI$; or
\item $x, y \notin A$ and $y = \uU.x$ for some $\uU \in \Uu_n$; in that case (AX2) is fulfilled
   since for any selfadjoint $f \in \IiI$, $f(y) = \uU.f(x)$ and consequently $\sigma(f(x)) =
   \sigma(f(y))$; or
\item $x, y \notin A$ and $y \notin \Uu_n.\{x\}$; in that case there are $f_1 \in \IiI$ and $f_2 \in
   C^*(X,.)$ such that $f_1(x) = I = f_2(x)$ and $f_2(y) = 0$ (cf. point (e) of \LEM{ext}), then
   $f = f_1 f_2 \in \IiI$ is such that $f(x) = I$ and $f(y) = 0$ and hence $x$ and $y$ are
   spectrally separated by $\IiI$.
\end{itemize}
Now according to \THM{SW-full}, it suffices to check that $\IiI_A \subset \Delta_2(\IiI)$ (since
$\IiI$ is closed). To this end, we fix $f \in \IiI_A$ and two arbitrary points $x$ and $y$ of $X$.
We consider similar cases as above:
\begin{enumerate}[(1$^{\circ}$)]
\item If $x, y \in A$, we have nothing to do because then $f(x) = f(y) = 0$.
\item If $x \in A$ and $y \notin A$ (or conversely), then there is $g \in \IiI$ such that $g(y) =
   f(y)$. But also $g(x) = 0 = f(x)$ and we are done.
\item If $x, y \notin A$ and $y = \uU.\{x\}$ for some $\uU \in \Uu_n$, then there is $g \in \IiI$
   with $g(x) = f(x)$. Then also $g(y) = g(\uU.x) = \uU.g(x) = \uU.f(x) = f(y)$ and we are done.
\item Finally, if $x,y \notin A$ and $y \notin \Uu_n.\{x\}$, there are functions $g_1,g_2 \in \IiI$
   and $h_1,h_2 \in C^*(X,.)$ such that $g_1(x) = f(x)$, $g_2(y) = f(y)$, $h_1(x) = I = h_2(y)$
   and $h_1(y) = 0 = h_2(x)$. Then $g = g_1 h_1 + g_2 h_2 \in \IiI$ satisfies $g(z) = f(z)$ for
   $z \in \{x,y\}$.
\end{enumerate}
The arguments (1$^{\circ}$)--(4$^{\circ}$) show that $f \in \Delta_2(\IiI)$ and thus $\IiI =
\IiI_A$. It follows from point (c) of \LEM{ext} that the $*$-homomorphism $C^*(X,.) \ni f \mapsto
f\bigr|_A \in C^*(A,.)$ is surjective. What is more, its kernel coincides with $\IiI_A = \IiI$
and therefore $C^*(X,.)$ and $C^*(A,.)$ are isomorphic.\par
We now turn to (b). We infer from (a) that there is a closed invariant set $A \subset X$ such that
$\ker(\pi) = \IiI_A$. Since $\pi$ is nonzero, $A$ is nonempty. Further, $k^2 \geqsl \dim
\pi(C^*(X,.)) = \dim(C^*(X,.) / \ker(\pi)) = \dim C^*(A,.) \geqsl n^2$ (by point (d)
of \LEM{ext} and by (a)) and thus $k = n$, $\dim C^*(A,.) = n^2$ and $\pi$ is surjective. Fix
$a \in A$ and observe that $A = \Uu_n.\{a\}$, because otherwise $\dim C^*(A,.) > n^2$ (thanks
to point (e) of \LEM{ext}). Now define $\Phi\dd M_n \to M_n$ by the rule $\Phi(X) = f(a)$ where
$\pi(f) = X$. It may easily be checked (using the fact that $\ker(\pi) = \IiI_{\Uu_n.\{a\}}$) that
$\Phi$ is a well defined one-to-one $*$-homomorphism of $M_n$. We conclude that there is $\uU \in
\Uu_n$ for which $\Phi(X) = \uU.X$ (in the algebra of matrices this is quite an elementary fact;
however, this follows also from \cite[Corollary~2.9.32]{sak}). Put $x = \uU^{-1}.a$ and note that
then $f(a) = \Phi(\pi(f)) = \uU.\pi(f)$ and consequently $\pi(f) = \uU^{-1}.f(a) = f(x)$ for each
$f \in C^*(X,.)$. The uniqueness of $x$ follows from points (d) and (e) of \LEM{ext}.\par
We turn to (c). Let $\Phi\dd C^*(X,.) \to C^*(Y,*)$ be a $*$-homomorphism
of $C^*$-algebras. Put $U = Y \setminus \{y \in Y\dd\ [\Phi(f)](y) = 0 \textup{ for each } f \in
C^*(X,.)\}$. It is clear that $U$ is invariant and open in $Y$. For any $y \in U$ the function
$C^*(X,.) \ni f \mapsto [\Phi(f)](y) \in M_n$ is a nonzero representation and therefore, thanks
to (b), there is a unique point $\varphi(y) \in X$ such that $[\Phi(f)](y) = f(\varphi(x))$ for each
$f \in C^*(X,.)$. In this way we have obtained a function $\varphi\dd U \to X$ for which
\eqref{eqn:homo} holds. By the uniqueness in (b), we see that $\varphi(\uU.y) = \uU.\varphi(y)$ for
any $\uU \in \Uu_n$ and $y \in U$. So, to prove that $\varphi$ is a morphism of $n$-spaces,
it remains to check that $\varphi$ is a proper map. First we shall show that $\varphi$ is
continuous. Suppose, for the contrary, that there is a set $D \subset U$ and a point $b \in U \cap
\bar{D}$ ($\bar{D}$ is the closure of $D$ in $Y$) such that $a := \varphi(b) \notin
\overline{\varphi(D)}$ (the closure taken in $X$). Let $V$ be an open neighbourhood of $a$ whose
closure is compact and disjoint from $F := \overline{\varphi(D)}$. Let $\scalarr$ be the standard
inner product on $M_n$, that is, $\scalar{X}{Y} = \tr(Y^*X)$ (`$\tr$' is the trace) and let $\|X\|_2
:= \sqrt{\tr(X^*X)}$. Take an irreducible matrix $Q \in M_n$ with $\|Q\|_2 = 1$. For simplicity, put
$\BBb = \{X \in M_n\dd\ \|X\|_2 \leqsl 1\}$. Our aim is to construct $f \in C^*(X,.)$ such that
$f(a) = Q$ and $f^{-1}(\{Q\}) \subset V$. Observe that there is a compact convex nonempty set $\KKk$
such that
\begin{equation}\label{eqn:12}
Q \notin \KKk \subset \BBb \quad \textup{and} \quad \{\uU.a\dd\ \uU \in \Uu_n,\ \uU.Q \notin \KKk\}
\subset V.
\end{equation}
(Indeed, it suffices to define $\KKk$ as the convex hull of the set $\{X \in \BBb\dd\ \|X - Q\|_2
\geqsl r\}$ where $r > 0$ is such that $\uU.a \in V$ whenever $\uU \in \Uu_n$ satisfies
$\|\uU.Q - Q\|_2 < r$. Such $r$ exists because $Q$ is irreducible and hence the maps $\Uu_n \ni \uU
\mapsto \uU.b \in X$ and $\Uu_n \ni \uU \mapsto \uU.Q \in M_n$ are embeddings.) Let $W =
\Uu_n.\{a\}$ and let $g_0\dd W \to M_n$ be given by $g_0(\uU.a) = \uU.Q$. Since $g_0(W \setminus V)
\subset \KKk$ (by \eqref{eqn:12}) and the set $\KKk$ (being compact, convex and nonempty) is
a retract of $M_n$, there is a map $g_1 \in \CCc_0(X \setminus V,M_n)$ such that $g_1(X \setminus V)
\subset \KKk$ and $g_1(x) = g_0(x)$ for $x \in W \setminus V$. Finally, there is $g \in
\CCc_0(X,M_n)$ which extends both $g_0$ and $g_1$, and $g(X) \subset \BBb$. Now put $f = g^{\Uu} \in
C^*(X,.)$ and notice that $f(a) = Q$ (by point (b) of \LEM{ext}). We claim that
\begin{equation}\label{eqn:13}
f^{-1}(\{Q\}) \subset V.
\end{equation}
Let us prove the above relation. Let $x \in X \setminus V$. Then $g(x) = g_1(x) \in \KKk$ and hence
$g(x) \neq Q$ (see \eqref{eqn:12}). The set $\Gg := \{\uU \in \Uu_n\dd\ \uU^{-1}.g(\uU.x) \neq Q\}$
is open in $\Uu_n$ and nonempty, which implies that its Haar measure is positive. Further,
$|\scalar{\uU^{-1}.g(\uU.x)}{Q}| \leqsl 1$ for any $\uU \in \Uu_n$ and
$\scalar{\uU^{-1}.g(\uU.x)}{Q} \neq 1$ for $\uU \in \Gg$ (since $g(X) \subset \BBb$). We infer from
these remarks that $\int_{\Uu_n} \scalar{\uU^{-1}.g(\uU.x)}{Q} \dint{\uU} \neq 1$. Equivalently,
$\scalar{f(x)}{Q} \neq 1$, which implies that $f(x) \neq Q$ and finishes the proof
of \eqref{eqn:13}. For $m \geqsl 1$ let $C_m = \{y \in Y\dd\ \|[\Phi(f)](y) - Q\|_2 \leqsl 2^{-m}\}$
and $F_m = \{x \in X\dd\ \|f(x) - Q\|_2 \leqsl 2^{-m}\}$. Since $f \in \CCc_0(X,M_n)$ and $\Phi(f)
\in \CCc_0(Y,M_n)$, $F_m$ is compact and $C_m$ is a compact neighbourhood of $b$. Consequently,
$C_m \cap D \neq \varempty$. We infer from \eqref{eqn:homo} that $\varphi(C_m \cap D) \subset F_m
\cap F$. Now the compactness argument gives $F \cap \bigcap_{m=1}^{\infty} F_m \neq \varempty$. Let
$c$ belong to this intersection. Then $f(c) = Q$ and $c \notin V$, which contradicts \eqref{eqn:13}
and finishes the proof of the continuity of $\varphi$.\par
To see that $\varphi$ is proper, take a compact set $K \subset X$ and note that $L = \Uu_n.K$ is
compact as well. Let $G \subset X$ be an open neighbourhood of $L$ with compact closure. Take a map
$\beta \in \CCc_0(X,M_n)$ such that $\beta(x) = I$ for $x \in L$ and $\beta$ vanishes off $G$. Let
$f = \beta^{\Uu} \in C^*(X,.)$ and observe that $f(x) = I$ for $x \in L$. Since $\Phi(f) \in
\CCc_0(Y,M_n)$, the set $Z := \{y \in Y\dd\ [\Phi(f)](y) = I\}$ is a compact subset of $Y$. But
\eqref{eqn:homo} implies that $Z \subset U$ and $\varphi^{-1}(K) \subset Z$. This finishes the proof
of the fact that $\varphi$ is a morphism. The uniqueness of the pair $(U,\varphi)$ follows from
\LEM{ext} and is left to the reader.\par
Now if $\Phi$ is a $*$-isomorphism of $C^*$-algebras, then $U = Y$ (by point (d) of \LEM{ext})
and thus $\Phi(f) = f \circ \varphi$. Similarly, $\Phi^{-1}$ is of the form $\Phi^{-1}(g) = g \circ
\psi$ for some morhism $\psi\dd (X,.) \to (Y,*)$. Then $f = f \circ (\varphi \circ \psi)$ for each
$f \in C^*(X,.)$ and the uniqueness in (c) gives $(\varphi \circ \psi)(x) = x$ for each $x \in
X$. Similarly, $(\psi \circ \varphi)(y) = y$ for any $y \in Y$ and consequently $\varphi$ is
an isomorphism of $n$-spaces. The proof is complete.
\end{proof}

\SECT{Representations of $C^*(X,.)$}

In this section we will characterize all representations of $C^*(X,.)$ for an arbitrary $n$-space
$(X,.)$. But first we shall give a `canonical' description of all continuous linear functionals
on $C^*(X,.)$. We underline here that we are not interested in the formula for the norm
of a functional. The results of the section will be applied in the next two parts where we formulate
our version of Fell's characterization of homogeneous $C^*$-algebras (Section~5) and a counterpart
of the spectral theorem for finite systems of operators which generate $n$-homogeneous
$C^*$-algebras (Section~6).

\begin{dfn}{meas}
Let $(X,.)$ be an $n$-space. Let $\Bb(X)$ denote the $\sigma$-algebra of all Borel subsets of $X$;
that is, $\Bb(X)$ is the smallest $\sigma$-algebra of subsets of $X$ which contains all open sets.
For any $\uU \in \Uu_n$ and $A \in \Bb(X)$ the set $\uU.A := \{\uU.a\dd\ a \in A\}$ is Borel
as well. We shall denote by $\chi_A\dd X \to \{0,1\}$ the characteristic function of $A$. Further,
$\Bb C^*(X,.)$ stands for the $C^*$-algebra of all bounded Borel (i.e. $\Bb(X)$-measurable)
functions $f\dd X \to M_n$ such that $f(\uU.x) = \uU.f(x)$ for any $\uU \in \Uu_n$ and
$x \in X$.\par
An \textit{$n$-measure} on $(X,.)$ is an $n \times n$-matrix $\mu = [\mu_{jk}]$ where $\mu_{jk}\dd
\Bb(X) \to \CCC$ is a regular (complex-valued) measure and $\mu(\uU.A) = \uU.\mu(A)$ for any $\uU
\in \Uu_n$ and $A \in \Bb(X)$ (here, of course, $\mu(A) = [\mu_{jk}(A)] \in M_n$). The set of all
$n$-measures on $(X,.)$ is denoted by $\MmM(X,.)$.\par
For any bounded Borel function $f\dd X \to M_n$ and an $n \times n$-matrix $\mu = [\mu_{jk}]$
of complex-valued regular Borel measures we define the integral $\int f \dint{\mu}$ as the complex
number $\sum_{j,k} \int_X f_{jk} \dint{\mu_{kj}}$ where $f(x) = [f_{jk}(x)]$ for $x \in X$.
We underline that in the formula for $\int f \dint{\mu}$, $f_{jk}$ meets $\mu_{kj}$ (not $\mu_{jk}$
[!]).
\end{dfn}

The first purpose of this section is to prove the following

\begin{thm}{dual}
For every continuous linear functional $\varphi\dd C^*(X,.) \to \CCC$ there exists a unique $\mu
\in \MmM(X,.)$ such that $\varphi(f) = \int f \dint{\mu}$ for any $f \in C^*(X,.)$.
\end{thm}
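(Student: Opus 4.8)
The plan is to reduce the statement to the classical Riesz representation theorem by first extending $\varphi$ from $C^*(X,.)$ to all of $\CCc_0(X,M_n)$ in a canonical, $\Uu_n$-invariant way, and then checking that the representing matrix of measures produced by Riesz is automatically equivariant, i.e.\ is an $n$-measure. The averaging map of \LEM{ext} supplies the extension: writing $Pg = g^{\Uu}$ for $g \in \CCc_0(X,M_n)$, point (a) gives $Pg \in C^*(X,.)$, while the computation behind (b) shows $Pf = f$ for $f \in C^*(X,.)$, so $P$ is a projection of $\CCc_0(X,M_n)$ onto $C^*(X,.)$. Each operator $S_{\uU}g := \uU^{-1}.g(\uU.\,{\cdot}\,)$ is an isometry of $\CCc_0(X,M_n)$ (unitary conjugation is isometric and $x \mapsto \uU.x$ is a homeomorphism) and $P = \int_{\Uu_n} S_{\uU}\dint{\uU}$, so $P$ is contractive; moreover $S_{\vV}S_{\uU} = S_{\uU\vV}$ together with the invariance of the Haar measure yields $P \circ S_{\uU} = P$ for every $\uU \in \Uu_n$.

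For existence I would set $\tilde{\varphi} := \varphi \circ P$, a continuous linear functional on $\CCc_0(X,M_n)$ extending $\varphi$ and satisfying $\tilde{\varphi} \circ S_{\uU} = \tilde{\varphi}$. Applying Riesz to $\CCc_0(X,M_n)$---it suffices to use the scalar theorem on each slot $h \mapsto \tilde{\varphi}(h\,E_{jk})$ for matrix units $E_{jk}$ and $h \in \CCc_0(X)$, and then repackage the resulting regular measures into a matrix $\mu = [\mu_{jk}]$ matching the pairing $\int g\dint{\mu} = \sum_{j,k}\int_X g_{jk}\dint{\mu_{kj}}$---produces a matrix $\mu$ of regular complex Borel measures with $\tilde{\varphi}(g) = \int g\dint{\mu}$, whence $\varphi(f) = \tilde{\varphi}(f) = \int f\dint{\mu}$ for $f \in C^*(X,.)$. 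It remains to see $\mu \in \MmM(X,.)$, and here the one genuine computation enters: starting from $\int (S_{\uU}g)\dint{\mu}$, trace cyclicity rewrites $\tr\big(U^{-1}g(\uU.x)U\,\dint{\mu(x)}\big) = \tr\big(g(\uU.x)\,\dint{(\uU.\mu)(x)}\big)$, and the change of variables $y = \uU.x$ (push-forward of measures) gives $\int(S_{\uU}g)\dint{\mu} = \int g\dint{\sigma_{\uU}}$ with $\sigma_{\uU}(B) = \uU.\mu(\uU^{-1}.B)$. The invariance $\tilde{\varphi}\circ S_{\uU} = \tilde{\varphi}$ forces $\int g\dint{\sigma_{\uU}} = \int g\dint{\mu}$ for all $g$, so the uniqueness clause of Riesz gives $\sigma_{\uU} = \mu$; replacing $B$ by $\uU.A$ this reads $\mu(\uU.A) = \uU.\mu(A)$, i.e.\ $\mu$ is an $n$-measure.

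For uniqueness I would run the same identity in reverse. If $\mu \in \MmM(X,.)$, its equivariance makes each $S_{\uU}$ measure-preserving in the sense $\int(S_{\uU}g)\dint{\mu} = \int g\dint{\mu}$, whence, by Fubini (valid since $\Uu_n$ is compact with probabilistic Haar measure and all quantities are bounded of finite total variation), $\int g^{\Uu}\dint{\mu} = \int g\dint{\mu}$ for every $g \in \CCc_0(X,M_n)$. Now if $\mu$ represents the zero functional on $C^*(X,.)$, then since $g^{\Uu} \in C^*(X,.)$ for arbitrary $g$ we get $\int g\dint{\mu} = \int g^{\Uu}\dint{\mu} = 0$ for all $g \in \CCc_0(X,M_n)$, and the uniqueness in Riesz forces $\mu = 0$. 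Applying this to the difference of two representing $n$-measures yields the asserted uniqueness.

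The main obstacle is not conceptual but bookkeeping: getting the pairing convention of \DEF{meas} (where $f_{jk}$ is integrated against $\mu_{kj}$, i.e.\ the trace pairing) to line up with the trace-cyclicity and push-forward manipulations, so that invariance of the functional under $S_{\uU}$ corresponds \emph{exactly} to the equivariance $\mu(\uU.A) = \uU.\mu(A)$ and not to some transpose or inverse of it. Once the translation operators $S_{\uU}$ are set up correctly and one verifies $S_{\vV}S_{\uU} = S_{\uU\vV}$ and $P\circ S_{\uU} = P$, the remainder is a routine application of Riesz representation and Fubini.
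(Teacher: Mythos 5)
Your proposal is correct and follows the same architecture as the paper: extend $\varphi$ to $\CCc_0(X,M_n)$ via the averaging projection $P\dd f \mapsto f^{\Uu}$ of \LEM{ext}, apply the Riesz representation theorem slotwise, and then argue that the representing matrix of measures is equivariant. The one place where you genuinely diverge is in how that equivariance is extracted. The paper isolates this as \PRO{n-measure}, which asserts that $\mu \in \MmM(X,.)$ is equivalent to the \emph{averaged} identity $\int f\dint{\mu} = \int f^{\Uu}\dint{\mu}$ for all $f \in \CCc_0(X,M_n)$; the harder implication (averaged identity $\Rightarrow$ equivariance) is proved there by testing against compact $\GGg_\delta$ sets, dominated convergence and Fubini. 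You instead observe that $\tilde{\varphi} = \varphi \circ P$ satisfies the \emph{pointwise} invariance $\tilde{\varphi} \circ S_{\uU} = \tilde{\varphi}$ for each fixed $\uU$ (via $P \circ S_{\uU} = P$, which is just left-invariance of the Haar measure), and then the trace-cyclicity/push-forward computation together with the uniqueness clause of Riesz gives $\sigma_{\uU} = \mu$ directly, hence $\mu(\uU.A) = \uU.\mu(A)$. This is a cleaner route to the same conclusion, since you exploit a stronger hypothesis that is actually available for $\varphi \circ P$, whereas the paper's proposition is stated under the weaker averaged hypothesis and therefore has to work harder. Your uniqueness argument (equivariance $\Rightarrow$ $\int g\dint{\mu} = \int g^{\Uu}\dint{\mu}$ by Fubini, then Riesz uniqueness applied to the difference) is exactly the easy direction of \PRO{n-measure} and matches the paper. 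The only caution is the bookkeeping you yourself flag: with the pairing $\int g\dint{\mu} = \sum_{j,k}\int g_{jk}\dint{\mu_{kj}}$ the trace manipulation does land on $\uU.\mu$ rather than $\uU^{-1}.\mu$, so the signs work out as you claim.
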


The above result is a simple consequence of the next one.

\begin{pro}{n-measure}
Let $\mu = [\mu_{jk}]$ be an $n \times n$-matrix of complex-valued regular Borel measures on $X$.
Then $\mu \in \MmM(X,.)$ iff for every map $f \in \CCc_0(X,M_n)$,
\begin{equation}\label{eqn:mu}
\int f \dint{\mu} = \int f^{\Uu} \dint{\mu}.
\end{equation}
\end{pro}
\begin{proof}
For any $n \times n$-matrix $A$ we shall write $A_{jk}$ to denote the suitable entry of $A$.
We adapt the same rule for functions $f \in \CCc_0(X,M_n)$ and matrix-valued measures. Further, for
arbitrarily fixed two indices $(j,k)$ and $(p,q)$, the function $\Uu_n \ni \uU \mapsto \uU_{jk}
\bar{\uU}_{pq} \in \CCC$ is well defined and continuous (although `$\uU_{jk}$' is not well defined).
Observe that for any $A \in M_n$, $\uU \in \Uu_n$ and an index $(p,q)$ one has:
\begin{equation*}
(\uU.A)_{p,q} = \sum_{j,k} \uU_{pj}\bar{\uU}_{qk} \cdot A_{jk} \qquad \textup{and} \qquad
(\uU^{-1}.A)_{p,q} = \sum_{j,k} \uU_{kq}\bar{\uU}_{jp} \cdot A_{jk}.
\end{equation*}
Further, for $\uU \in \Uu_n$ and a complex-valued regular Borel measure $\nu$ on $X$ let $\nu^{\uU}$
be the (complex-valued regular Borel) measure on $X$ given by $\nu^{\uU}(A) = \nu(\uU.A)\ (A \in
\Bb(X))$. It follows from the transport measure theorem that for any $g \in \CCc_0(X,\CCC)$:
$$\int_X g(\uU.x) \dint{\nu^{\uU}(x)} = \int_X g(x) \dint{\nu(x)}.$$
We adapt the above notation also for $n \times n$-matrix $\mu$ of measures: $\mu^{\uU}(A) =
\mu(\uU.A)$. Notice that $(\mu^{\uU})_{jk} = (\mu_{jk})^{\uU}$.\par
Now assume that $\mu \in \MmM(X,.)$. This means that for any $\uU \in \Uu_n$, $\uU.\mu = \mu^{\uU}$.
For $f \in \CCc_0(X,M_n)$ and $x \in X$ we have
$$(f^{\Uu})_{pq}(x) =
\sum_{j,k} \int_{\Uu_n} \uU_{kq}\bar{\uU}_{jp} \cdot f_{jk}(\uU.x) \dint{\uU}$$
and therefore, by Fubini's theorem,
\begin{multline*}
\int f^{\Uu} \dint{\mu} = \sum_{p,q} \int_X (f^{\Uu})_{p,q} \dint{\mu_{qp}} = \sum_{p,q} \sum_{j,k}
\int_X \int_{\Uu_n} \uU_{kq}\bar{\uU}_{jp} \cdot f_{jk}(\uU.x) \dint{\uU} \dint{\mu_{qp}(x)}\\
= \sum_{j,k} \int_{\Uu_n} \int_X f_{jk}(\uU.x) \dint{\Bigl(\sum_{p,q} \uU_{kq}\bar{\uU}_{jp} \cdot
\mu_{qp}\Bigr)(x)} \dint{\uU}\\= \sum_{j,k} \int_{\Uu_n} \int_X f_{jk}(\uU.x)
\dint{(\uU.\mu)_{kj}(x)} \dint{\uU} = \sum_{j,k} \int_{\Uu_n} \int_X f_{jk}(\uU.x)
\dint{(\mu_{kj})^{\uU}(x)} \dint{\uU}\\= \sum_{j,k} \int_{\Uu_n} \int_X f_{jk}(x)
\dint{\mu_{kj}(x)} \dint{\uU} = \sum_{j,k} \int_X f_{jk}(x) \dint{\mu_{kj}(x)} = \int f \dint{\mu},
\end{multline*}
which gives \eqref{eqn:mu}. Conversely, assume \eqref{eqn:mu} if fulfilled for any $f \in
\CCc_0(X,M_n)$ and fix a compact $\GGg_{\delta}$ subset $K$ of $X$ and an index $(p,q)$. Let $g \in
\CCc_0(X,\CCC)$ be arbitrary and let $f \in \CCc_0(X,M_n)$ be such that $f_{pq} = g$ and $f_{jk} =
0$ for $(j,k) \neq (p,q)$. Applying \eqref{eqn:mu} for such $f$, we obtain
\begin{equation}\label{eqn:31}
\int_X g \dint{\mu_{qp}} = \sum_{j,k} \int_X \int_{\Uu_n} \uU_{qk}\bar{\uU}_{pj} \cdot g(\uU.x)
\dint{\uU} \dint{\mu_{kj}(x)}.
\end{equation}
Further, since $K$ is compact and $\GGg_{\delta}$, there is a sequence $(g_k)_{k=1}^{\infty} \subset
\CCc_0(X,\CCC)$ such that $g_k(X) \subset [0,1]$ and $\lim_{k\to\infty} g_k(x) = \chi_K(x)$ for any
$x \in X$. Substituting $g = g_k$ in \eqref{eqn:31} and letting $k \to \infty$, we obtain
(by Lebesgue's dominated convergence theorem and Fubini's one):
\begin{multline*}
\mu_{q,p}(K) = \sum_{j,k} \int_{\Uu_n} \int_X \uU_{qk}\bar{\uU}_{pj} \cdot \chi_K(\uU.x)
\dint{\mu_{kj}(x)} \dint{\uU}\\= \int_{\Uu_n} \Bigl(\sum_{j,k} \uU_{qk}\bar{\uU}_{pj} \cdot
\mu_{kj}(\uU^{-1}.K)\Bigr) \dint{\uU} = \int_{\Uu_n} (\uU.\mu)_{q,p}(\uU^{-1}.K) \dint{\uU}.
\end{multline*}
We infer from the arbitrarity of $(p,q)$ in the above formula that
$$\mu(K) = \int_{\Uu_n} \uU.\mu(\uU^{-1}.K) \dint{\uU}.$$
Now if $\vV \in \Uu_n$, the set $\vV.K$ is also compact and $\GGg_{\delta}$ and therefore
\begin{multline*}
\mu(\vV.K) = \int_{\Uu_n} \uU.\mu(\uU^{-1}\vV.K) \dint{\uU} = \int_{\Uu_n} \vV.[\uU.\mu(\uU^{-1}.K)]
\dint{\uU}\\= \vV.\Bigl(\int_{\Uu_n} \uU.\mu(\uU^{-1}.K) \dint{\uU}\Bigr) = \vV.\mu(K).
\end{multline*}
Finally, since $\mu$ is regular, the relation $\mu(\vV.A) = \vV.\mu(A)$ holds for any $A \in \Bb(X)$
and we are done.
\end{proof}

\begin{proof}[Proof of \THM{dual}]
Note that the function $P\dd \CCc_0(X,M_n) \ni f \mapsto f^{\Uu} \in C^*(X,.)$ is a continuous
linear projection (that is, $P(f) = f$ for $f \in C^*(X,.)$). So, if $\varphi\dd C^*(X,.) \to
\CCC$ is a continuous linear functional, so is $\psi := \varphi \circ P\dd \CCc_0(X,M_n) \to \CCC$.
Since $\CCc_0(X,M_n)$ is isomorphic, as a Banach space, to $[\CCc_0(X,\CCC)]^{n^2}$, the Riesz-type
representation theorem yields that there is a unique $n \times n$-matrix $\mu$ of complex-valued
regular Borel measures such that $\psi(f) = \int f \dint{\mu}$. Observe that $\psi(f^{\Uu}) =
\psi(f)$ for any $f \in \CCc_0(X,M_n)$ and hence $\mu \in \MmM(X,.)$, thanks to \PRO{n-measure}.
The uniqueness of $\mu$ follows from the above construction, \PRO{n-measure} and the uniqueness
in the Riesz-type representation theorem.
\end{proof}

Now we turn to representations of $C^*(X,.)$. To this end, we introduce

\begin{dfn}{n-measure}
An operator-valued \textit{$n$-measure} on the $n$-space $(X,.)$ is any function of the form $E\dd
\Bb(X) \ni A \mapsto [E_{jk}(A)] \in M_n(\BBb(\HHh))$ (where $(\HHh,\scalarr)$ is a Hilbert space)
such that:
\begin{enumerate}[(M1)]
\item for any $h, w \in \HHh$ and $j,k \in \{1,\ldots,n\}$, the function $E_{jk}^{(h,w)}\dd \Bb(X)
   \ni A \mapsto \scalar{E_{jk}(A) h}{w} \in \CCC$ is a (complex-valued) measure,
\item for any $\uU \in \Uu_n$ and $A \in \Bb(X)$, $E(\uU.A) = \uU.E(A)$.
\end{enumerate}
In other words, an operator-valued $n$-measure is an $n \times n$-matrix of operator-valued measures
which satisfies axiom (M2). The operator-valued $n$-measure $E$ is \textit{regular} iff
$E_{jk}^{(h,w)}$ is regular for any $h,w$ and $j,k$.
\end{dfn}

Recall that if $\mu\dd \Bb(X) \to \BBb(\HHh)$ is an operator-valued measure and $f\dd X \to \CCC$ is
a bounded Borel function, $\int_X f \dint{\mu}$ is a bounded linear operator on $\HHh$, defined
by an implicit formula:
$$\Bigl\langle\Bigl(\int_X f \dint{\mu}\Bigr)h,w\Bigr\rangle = \int_X f \dint{\mu^{(h,w)}} \qquad
(h,w \in \HHh),$$
where $\mu^{(h,w)}(A) = \scalar{\mu(A)h}{w}\ (A \in \Bb(X))$. Now assume that $E = [E_{jk}]\dd
\Bb(X) \to M_n(\BBb(\HHh))$ is an $n$-measure and $f = [f_{jk}]\dd X \to M_n$ is a bounded Borel
function. We define $\int f \dint{E}$ as a bounded linear operator on $\HHh$ given by $\int f
\dint{E} = \sum_{j,k} \int_X f_{jk} \dint{E_{kj}}$. We are now ready to introduce

\begin{dfn}{spectral-n}
A \textit{spectral} $n$-measure is any operator-valued regular $n$-measure $E\dd \Bb(X) \to
M_n(\BBb(\HHh))$ such that
\begin{gather}
\Bigl(\int f \dint{E}\Bigr)^* = \int f^* \dint{E},\label{eqn:adjoint}\\
\int f \cdot g \dint{E} = \int f \dint{E} \cdot \int g \dint{E}\label{eqn:multi}
\end{gather}
for any $f, g \in \Bb C^*(X,.)$. (The product $f \cdot g$ is computed pointwise as the product
of matrices.) In other words, a spectral $n$-measure is an operator-valued regular $n$-measure $E\dd
\Bb(X) \to M_n(\BBb(\HHh))$ such that the operator
$$\Bb C^*(X,.) \ni f \mapsto \int f \dint{E} \in \BBb(\HHh)$$
is a representation of a $C^*$-algebra $\Bb C^*(X,.)$.
\end{dfn}

The main result of this section is the following

\begin{thm}{represent}
Let $(X,.)$ be an $n$-space and $\pi\dd C^*(X,.) \to \BBb(\HHh)$ be a representation. There is
a unique spectral $n$-measure $E\dd \Bb(X) \to M_n(\BBb(\HHh))$ such that
\begin{equation}\label{eqn:represent}
\pi(f) = \int f \dint{E} \qquad (f \in C^*(X,.)).
\end{equation}
In particular, every representation of $C^*(X,.)$ admits an extension to a representation
of $\Bb C^*(X,.)$.
\end{thm}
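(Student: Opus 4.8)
The plan is to mimic the passage from a representation of a commutative $C^*$-algebra to its spectral measure, using \THM{dual} in place of the scalar Riesz representation. First I would fix $h,w \in \HHh$ and apply \THM{dual} to the bounded functional $C^*(X,.) \ni f \mapsto \scalar{\pi(f)h}{w}$, obtaining a unique $\mu^{(h,w)} \in \MmM(X,.)$ with $\scalar{\pi(f)h}{w} = \int f \dint{\mu^{(h,w)}}$. By the uniqueness clause the assignment $(h,w) \mapsto \mu^{(h,w)}$ is sesquilinear, and since $\pi$ is contractive while the projection $f \mapsto f^{\Uu}$ has norm one, the entries obey $|\mu^{(h,w)}_{jk}(A)| \leqsl C\|h\|\,\|w\|$ for a fixed constant $C$. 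Hence for each $A \in \Bb(X)$ and each pair $(j,k)$ the bounded sesquilinear form $(h,w)\mapsto \mu^{(h,w)}_{jk}(A)$ is represented by a unique $E_{jk}(A) \in \BBb(\HHh)$, and I would set $E(A) = [E_{jk}(A)]$.

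Next I would verify that $E$ is a regular operator-valued $n$-measure and that it reproduces $\pi$. Axiom (M1) is immediate because $A \mapsto \scalar{E_{jk}(A)h}{w} = \mu^{(h,w)}_{jk}(A)$ is a regular measure; axiom (M2) follows by inserting the entrywise formula for the action $\uU.(\cdot)$ (as computed in the proof of \PRO{n-measure}) into the equivariance $\mu^{(h,w)}(\uU.A) = \uU.\mu^{(h,w)}(A)$, valid since $\mu^{(h,w)} \in \MmM(X,.)$. Unwinding $\int f \dint{E} = \sum_{j,k}\int_X f_{jk}\dint{E_{kj}}$ gives $\scalar{(\int f \dint{E})h}{w} = \sum_{j,k}\int_X f_{jk}\dint{\mu^{(h,w)}_{kj}} = \int f \dint{\mu^{(h,w)}} = \scalar{\pi(f)h}{w}$ for $f \in C^*(X,.)$, so $\pi(f) = \int f \dint{E}$; the same computation shows $\scalar{(\int f \dint{E})h}{w} = \int f \dint{\mu^{(h,w)}}$ for every bounded Borel $f$.

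The real work is to show $E$ is spectral, i.e. that $\Phi(g) := \int g \dint{E}$ is a $*$-representation of $\Bb C^*(X,.)$; equivalently, \eqref{eqn:adjoint} and \eqref{eqn:multi} must be upgraded from $C^*(X,.)$ (where they hold because $\pi$ is a $*$-homomorphism) to all of $\Bb C^*(X,.)$. My strategy is always the same: reformulate the desired identity as the statement that two elements of $\MmM(X,.)$ agree on $C^*(X,.)$, invoke the uniqueness in \THM{dual} to conclude they coincide entrywise as measures, and thereby extend the identity to all bounded Borel functions. For \eqref{eqn:adjoint}, the Hermitian symmetry $\int f \dint{\mu^{(h,w)}} = \overline{\int f^* \dint{\mu^{(w,h)}}}$ (a direct consequence of $\pi(f^*) = \pi(f)^*$) shows that $\mu^{(h,w)}$ and the conjugate transpose $(\mu^{(w,h)})^*$ represent the same functional; as the latter again lies in $\MmM(X,.)$, uniqueness yields $\mu^{(h,w)}_{kj} = \overline{\mu^{(w,h)}_{jk}}$, from which $\Phi(g^*) = \Phi(g)^*$ for all $g \in \Bb C^*(X,.)$ drops out. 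For \eqref{eqn:multi} I would bootstrap in two stages. Starting from $\pi(fg) = \pi(f)\pi(g)$ for continuous $f,g$ and the algebraic identity $\int fg \dint{\mu^{(h,w)}} = \int f \dint{(g\,\mu^{(h,w)})}$, uniqueness forces $\mu^{(\pi(g)h,w)} = g\,\mu^{(h,w)}$ (the product of the equivariant density $g$ with $\mu^{(h,w)}$, which one checks is again an $n$-measure), giving $\Phi(fg) = \Phi(f)\pi(g)$ for $f$ bounded Borel and $g$ continuous. Repeating the argument in the second variable — now against the $n$-measure $\mu^{(h,w)}f$ and the vector $\Phi(f)^*w$ — upgrades this to $\Phi(fg) = \Phi(f)\Phi(g)$ for all bounded Borel $f,g$.

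I expect the main obstacle to be exactly this two-variable bootstrapping for \eqref{eqn:multi}: at each stage one must check that multiplying an $n$-measure by an equivariant Borel density (on the left or on the right) produces an element of $\MmM(X,.)$ — an equivariance computation in the spirit of \PRO{n-measure} — and one must carry the ``twisted'' convention $\int f \dint{E} = \sum_{j,k}\int f_{jk}\dint{E_{kj}}$ together with all matrix indices faithfully through every application of uniqueness and every interchange of sums and integrals. Finally, uniqueness of $E$ follows because any spectral $n$-measure $E'$ with $\pi(f) = \int f \dint{E'}$ on $C^*(X,.)$ has $\scalar{E'_{jk}(\cdot)h}{w}$ representing the same functionals as $\scalar{E_{jk}(\cdot)h}{w}$, whence $E' = E$ by \THM{dual}; and the extension statement in the theorem is merely the observation that $\Phi$ is the required representation of $\Bb C^*(X,.)$.
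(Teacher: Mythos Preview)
Your construction of $E$ from the functionals $\mu^{(h,w)}$ via \THM{dual}, the verification of axioms (M1)--(M2) and regularity, and the identity $\pi(f)=\int f\dint{E}$ on $C^*(X,.)$ are exactly what the paper does. Your treatment of \eqref{eqn:adjoint} is also the paper's: uniqueness in \THM{dual} forces $(\mu^{(h,w)})^* = \mu^{(w,h)}$, i.e.\ $(E_{jk}(A))^* = E_{kj}(A)$, and the Borel adjoint formula follows. The genuine divergence is in the multiplicativity step. The paper isolates a Lusin-type approximation lemma (\LEM{dense}): every $f\in\Bb C^*(X,.)$ is approximable in $L^1(\mu)$ by some $g\in C^*(X,.)$, uniformly in norm, for any finite regular $\mu$; it then upgrades \eqref{eqn:multi} from continuous to Borel by a three-step $\epsi$-argument using that lemma (this is the content of \PRO{n-spectral}(b)). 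You instead absorb the equivariant density into the measure: from $\pi(fg)=\pi(f)\pi(g)$ and $\int fg\dint{\mu^{(h,w)}}=\int f\dint{(g\,\mu^{(h,w)})}$ you invoke uniqueness in \THM{dual} to get $g\,\mu^{(h,w)}=\mu^{(\pi(g)h,w)}$, which immediately yields $\Phi(fg)=\Phi(f)\pi(g)$ for Borel $f$ and continuous $g$; a symmetric pass with $\mu^{(h,w)}f$ and $\Phi(f)^*w$ finishes. Your route is correct --- the point you flag, that $g\,\mu$ and $\mu\,f$ are again $n$-measures, follows from the identity $(fg)^{\Uu}=f^{\Uu}g$ (resp.\ $=f\,g^{\Uu}$) when $g$ (resp.\ $f$) is equivariant, exactly ``in the spirit of \PRO{n-measure}'' --- and it is slightly more economical, since it bypasses \LEM{dense}. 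The paper's approach buys a reusable $L^1$-density lemma (used again later in \PRO{v-N}) and a clean stand-alone criterion (\PRO{n-spectral}) stating that spectrality need only be checked on $C^*(X,.)$.
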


In the proof of the above result we shall involve the following

\begin{lem}{dense}
Let $\mu\dd \Bb(X) \to \RRR_+$ be a regular measure. For any $f \in \Bb C^*(X,.)$ and $\epsi > 0$
there exists $g \in C^*(X,.)$ such that $\sup_{x \in X} \|g(x)\| \leqsl \sup_{x \in X} \|f(x)\|$
and $\int_X \|f(x) - g(x)\| \dint{\mu(x)} < \epsi$.
\end{lem}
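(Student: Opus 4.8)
The plan is to approximate the bounded Borel equivariant function $f$ by a compactly supported \emph{continuous} matrix-valued function in an $L^1$-sense, and then restore equivariance by applying the averaging operator $h\mapsto h^{\Uu}$ of \LEM{ext} (recall $h^{\Uu}(x)=\int_{\Uu_n}\uU^{-1}.h(\uU.x)\dint{\uU}$), whose value lies in $C^*(X,.)$ by part (a) of that lemma. The one delicate point is that averaging transports mass around by the action, so merely controlling $\int_X\|f-h\|\dint{\mu}$ for the given measure $\mu$ is not enough; I must control the error for the \emph{invariant} average of $\mu$.

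First I would build that invariant measure. Let $\Phi\dd\Uu_n\times X\to X$, $\Phi(\uU,x)=\uU.x$, which is continuous, and let $\tilde{\mu}$ be the pushforward of the finite regular product measure $\dint{\uU}\otimes\mu$ under $\Phi$. Then $\tilde{\mu}$ is a finite Borel measure with $\tilde{\mu}(X)=\mu(X)$, and it is regular: the pushforward of a finite regular measure under a continuous map is again regular, since compact sets push to compact sets (so inner regularity by compacta is preserved) and outer regularity then follows by finiteness. Tonelli's theorem gives, for every Borel $\phi\dd X\to\RRR_+$, the substitution formula $\int_X\phi\dint{\tilde{\mu}}=\int_{\Uu_n}\int_X\phi(\uU.x)\dint{\mu(x)}\dint{\uU}$, which is the bridge between the two measures.

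Next comes the Lusin step. Put $R=\sup_{x\in X}\|f(x)\|<\infty$; we may assume $R>0$ (else take $g=0$). Applying Lusin's theorem to each of the $n^2$ entries of $f$ with respect to the finite regular measure $\tilde{\mu}$ produces a compactly supported $h_0\in\CCc_0(X,M_n)$ agreeing with $f$ off a Borel set $E$ with $\tilde{\mu}(E)$ as small as desired. Composing with the radial retraction $r$ of $M_n$ onto the operator-norm ball $\{A\dd\ \|A\|\leqsl R\}$ (the identity on the ball, $A\mapsto (R/\|A\|)A$ outside it) yields $h:=r\circ h_0\in\CCc_0(X,M_n)$ with $\sup_x\|h(x)\|\leqsl R$; moreover $h=f$ off $E$, because $\|f\|\leqsl R$ forces $r(f)=f$. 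Hence $\int_X\|f-h\|\dint{\tilde{\mu}}\leqsl 2R\,\tilde{\mu}(E)<\epsi$ once $\tilde{\mu}(E)$ is chosen small enough.

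Finally I would assemble the estimate with $g:=h^{\Uu}\in C^*(X,.)$. Since conjugation by a unitary preserves the operator norm, $\|g(x)\|\leqsl\int_{\Uu_n}\|\uU^{-1}.h(\uU.x)\|\dint{\uU}=\int_{\Uu_n}\|h(\uU.x)\|\dint{\uU}\leqsl R=\sup_x\|f(x)\|$, which is the required sup-norm bound. For the integral bound, equivariance of $f$ gives $f(x)=\int_{\Uu_n}\uU^{-1}.f(\uU.x)\dint{\uU}$, so $f(x)-g(x)=\int_{\Uu_n}\uU^{-1}.[f(\uU.x)-h(\uU.x)]\dint{\uU}$ and therefore $\|f(x)-g(x)\|\leqsl\int_{\Uu_n}\|f(\uU.x)-h(\uU.x)\|\dint{\uU}$. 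Integrating against $\mu$, then using Tonelli and the substitution formula above, $\int_X\|f-g\|\dint{\mu}\leqsl\int_{\Uu_n}\int_X\|f(\uU.x)-h(\uU.x)\|\dint{\mu(x)}\dint{\uU}=\int_X\|f-h\|\dint{\tilde{\mu}}<\epsi$, completing the argument. The \emph{main obstacle} is exactly the passage to the invariant measure $\tilde{\mu}$ and the check that it is still a finite regular Borel measure, so that Lusin's theorem applies; once that is secured, the equivariant averaging estimate and the norm control are routine.
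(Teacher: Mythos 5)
Your proof is correct, but it restores equivariance by a genuinely different mechanism than the paper. The paper also starts from Lusin's theorem (applied entrywise to $f$ with respect to $\mu$, giving a compact $L$ with $f\bigr|_L$ continuous and $\mu(X\setminus L)$ small), but then \emph{saturates} $L$ to the invariant compact set $K=\Uu_n.L$, observes that $f\bigr|_K$ is still continuous (by equivariance of $f$ and compactness of $\Uu_n$), and invokes the equivariant Tietze extension of point (c) of \LEM{ext} to produce $g\in C^*(X,.)$ with $g\bigr|_K=f\bigr|_K$ and the same sup-norm; since $g$ agrees with $f$ \emph{exactly} on $K$ and $\mu(X\setminus K)\leqsl\mu(X\setminus L)$ is small, the integral estimate is immediate for the original measure $\mu$ and no auxiliary measure is needed. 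You instead restore equivariance by averaging a truncated Lusin approximant, which is what forces you to introduce the orbit-averaged measure $\tilde{\mu}$ and to verify that the product measure $\dint{\uU}\otimes\mu$ and its pushforward are regular Borel measures --- true here (and correctly identified by you as the delicate point; it helps that $\Uu_n$ is compact metrizable, so $\Bb(\Uu_n\times X)=\Bb(\Uu_n)\otimes\Bb(X)$ and the product of the two Radon measures is Radon), but it is measure-theoretic overhead the paper avoids entirely. What your route buys is that it leans only on the averaging projection of point (a) of \LEM{ext} rather than the extension result (c), and the radial-retraction truncation is a nice clean way to secure the sup-norm bound; what the paper's route buys is brevity and the fact that all estimates take place with respect to the given $\mu$. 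Both arguments are complete and valid.
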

\begin{proof}
Let $f = [f_{jk}] \in \Bb C^*(X,.)$ and let $M > 0$ be such that
$$\sup_{x \in X} \|f(x)\| \leqsl M.$$
It follows from the regularity of $\mu$ that for each $(j,k)$ there is a compact set $L_{jk}$ such
that $\mu(X \setminus L_{jk}) \leqsl \frac{\epsi}{2 M n^2}$ and $f_{jk}\bigr|_{L_{jk}}$ is
continuous. Put $L = \bigcap_{j,k} L_{jk}$ and $K = \Uu_n.L$. Then $K$ is compact and invariant, and
$\mu(X \setminus K) \leqsl \frac{\epsi}{2M}$. What is more, $f\bigr|_K$ is continuous (this follows
from the facts that $f\bigr|_L$ is continuous and $f(\uU.x) = \uU.f(x)$). Now point (c) of \LEM{ext}
yields the existence of $g \in C^*(X,.)$ such that $\sup_{x \in X} \|g(x)\| \leqsl \sup_{x \in X}
\|f(x)\|$ and $g\bigr|_K = f\bigr|_K$. Then:
$$\int_X \|f(x) - g(x)\| \dint{\mu(x)} = \int_{X \setminus K} \|f(x) - g(x)\| \dint{\mu} \leqsl
2M \cdot \mu(X \setminus K) = \epsi$$
and we are done.
\end{proof}

\begin{pro}{n-spectral}
Let $E = [E_{jk}]\dd \Bb(X) \to M_n(\BBb(\HHh))$ be a regular $n$-measure.
\begin{enumerate}[\upshape(a)]
\item $E$ satisfies \eqref{eqn:adjoint} for any $f \in \Bb C^*(X,.)$ iff \eqref{eqn:adjoint} is
   fulfilled for any $f \in C^*(X,.)$, iff $(E_{jk}(A))^* = E_{kj}(A)$ for each $A \in \Bb(X)$,
\item $E$ is spectral iff \eqref{eqn:adjoint} and \eqref{eqn:multi} are satisfied for any $f, g \in
   C^*(X,.)$.
\end{enumerate}
\end{pro}
\begin{proof}
For any complex-valued regular Borel measure $\nu$ on $X$ we shall denote by $|\nu|$ the variation
of $\nu$. Recall that $|\nu|$ is a nonnegative finite regular Borel measure on $X$. Further, for any
$h,w \in \HHh$ and $j,k \in \{1,\ldots,n\}$ let $E_{jk}^{(h,w)}$ be as in \DEF{n-measure}. Finally,
$\scalarr$ stands for the scalar product of $\HHh$.\par
We begin with (a). Fix $h, w \in \HHh$ and $j, k \in \{1,\ldots,n\}$. First assume that
\eqref{eqn:adjoint} is fulfilled for any $f \in C^*(X,.)$. Let $E^{(h,w)} := [E_{jk}^{(h,w)}]$
and note that $E^{(h,w)} \in \MmM(X,.)$, since $E_{pq}(\uU.A) = \sum_{j,k} \uU_{pj}\bar{\uU}_{qk}
\cdot E_{jk}(A)$ and thus $E_{pq}^{(h,w)}(\uU.A) = \sum_{j,k} \uU_{pj}\bar{\uU}_{qk} \cdot
E_{jk}^{(h,w)}(A) = (\uU.E^{(h,w)}(A))_{pq}.$ Observe that $(E^{(h,w)})^* \in \MmM(X,.)$ as well
where $(E^{(h,w)})^*(A) = (E^{(h,w)}(A))^*$ (because $(\uU.P)^* = \uU.P^*$ for any $P \in M_n$).
Further, for each $f \in C^*(X,.)$ we have
\begin{multline*}
\overline{\int f^* \dint{E^{(h,w)}}} = \sum_{j,k} \overline{\int_X (f^*)_{jk} \dint{E_{kj}^{(h,w)}}}
= \sum_{j,k} \int_X f_{kj} \dint{\overline{E_{kj}^{(h,w)}}}\\
= \sum_{j,k} \int_X f_{kj} \dint{(E^{(h,w)})^*_{jk}} = \int f \dint{(E^{(h,w)})^*}
\end{multline*}
and, on the other hand,
\begin{multline*}
\overline{\int f^* \dint{E^{(h,w)}}} = \overline{\left\langle\Bigl(\int f^*
\dint{E}\Bigr)h,w\right\rangle} =
\overline{\left\langle\Bigl(\int f \dint{E}\Bigr)^* h,w\right\rangle}\\
= \left\langle\Bigl(\int f \dint{E}\Bigr)w,h\right\rangle = \int f \dint{E^{(w,h)}}.
\end{multline*}
The uniqueness in \THM{dual} implies that $(E^{(h,w)})^* = E^{(w,h)}$, which means that for each
$A \in \Bb(X)$, $\scalar{(E_{jk}(A))w}{h} = \overline{\scalar{(E_{kj}(A))h}{w}} =
\scalar{(E_{kj}(A))^* w}{h}$. We conclude that $(E_{jk}(A))^* = E_{kj}(A)$. Finally, if the last
relation holds for any $j,k \in \{1,\ldots,n\}$, then for every $f \in \Bb C^*(X,.)$ we get
\begin{multline*}
\Bigl(\int f \dint{E}\Bigr)^* = \sum_{j,k} \Bigl(\int_X f_{jk} \dint{E_{kj}}\Bigr)^* =
\sum_{j,k} \int_X \bar{f}_{jk} \dint{(E_{kj})^*}\\= \sum_{j,k} \int_X (f^*)_{kj} \dint{E_{jk}} =
\int f^* \dint{E}.
\end{multline*}
This completes the proof of (a). We now turn to (b). We assume that \eqref{eqn:adjoint} and
\eqref{eqn:multi} are fulfilled for any $f,g \in C^*(X,.)$. We know from (a) that actually
\eqref{eqn:adjoint} is satisfied for any $f \in \Bb C^*(X,.)$. The proof of \eqref{eqn:multi} is
divided into three steps stated below.\par
\underline{Step 1}. If $\xi \in \Bb C^*(X,.)$ is such that
\begin{equation}\label{eqn:41}
\int g \cdot \xi \dint{E} = \int g \dint{E} \cdot \int \xi \dint{E}
\end{equation}
for any $g \in C^*(X,.)$, then $\int f \cdot \xi \dint{E} = \int f \dint{E} \cdot \int \xi
\dint{E}$ for any $f \in \Bb C^*(X,.)$.\par
Proof of step 1. Fix $f \in \Bb C^*(X,.)$, $h, w \in \HHh$ and $\epsi > 0$. Let $M \geqsl 1$ be
such that $\sup_{x \in X} \|\xi(x)\| \leqsl M$. Put $v = (\int \xi \dint{E}) h$ and $\mu =
\sum_{j,k} (|E_{jk}^{(h,w)}| + |E_{jk}^{(v,w)}|)$. Since $\mu$ is finite and regular, \LEM{dense}
gives us a map $g \in C^*(X,.)$ such that $\int_X \|f(x) - g(x)\| \dint{\mu(x)} \leqsl
\frac{\epsi}{M}$. Then \eqref{eqn:41} holds and therefore (remember that $M \geqsl 1$):
\begin{multline*}
\left|\left\langle\Bigl(\int f \cdot \xi \dint{E} - \int f \dint{E} \cdot \int \xi
\dint{E}\Bigr)h,w\right\rangle\right| \leqsl \left|\left\langle\Bigl(\int f \cdot \xi \dint{E}
- \int g \cdot \xi \dint{E}\Bigr)h,w\right\rangle\right|\\+ \left|\left\langle\Bigl(\int g \dint{E}
\cdot \int \xi \dint{E} - \int f \dint{E} \cdot \int \xi \dint{E}\Bigr)h,w\right\rangle\right|\\
= \Bigl|\sum_{j,k} \int_X ((f - g)\xi)_{jk} \dint{E_{kj}^{(h,w)}}\Bigr| + \Bigl|\sum_{j,k} \int_X
(g_{jk} - f_{jk}) \dint{E_{kj}^{(v,w)}}\Bigr|\\\leqsl \sum_{j,k} \int_X \|(f(x) - g(x))\xi(x)\|
\dint{|E_{kj}^{(h,w)}|(x)} + \sum_{j,k} \int_X \|g(x) - f(x)\| \dint{|E_{kj}^{(v,w)}|(x)}\\\leqsl M
\int_X \|f(x) - g(x)\| \dint{\mu(x)} \leqsl \epsi.
\end{multline*}\par
\underline{Step 2}. For any $f \in \Bb C^*(X,.)$ and $g \in \CCc(X,.)$, \eqref{eqn:multi}
holds.\par
Proof of step 2. It follows from step 1 and our assumptions in (b) that $\int g^* \cdot f^* \dint{E}
= \int g^* \dint{E} \cdot \int f^* \dint{E}$. Now it suffices to apply \eqref{eqn:adjoint}:
$$\int f \cdot g \dint{E} = \Bigl(\int g^* \cdot f^* \dint{E}\Bigr)^* = \Bigl(\int g^* \dint{E}
\cdot \int f^* \dint{E}\Bigr)^* = \int f \dint{E} \cdot \int g \dint{E}.$$\par
\underline{Step 3}. The condition \eqref{eqn:multi} is satisfied for any $f, g \in
\Bb C^*(X,.)$.\par
Proof of step 3. Just apply step 2 and then step 1.
\end{proof}

\begin{proof}[Proof of \THM{represent}]
According to point (b) of \PRO{n-spectral}, it suffices to show that there exists a regular
$n$-measure $E\dd \Bb(X) \to M_n(\BBb(\HHh))$ such that \eqref{eqn:represent} holds and that such
$E$ is unique. According to \THM{dual}, for any $h, w \in \HHh$ there is a unique $\mu^{(h,w)} =
[\mu^{(h,w)}_{jk}] \in \MmM(X,.)$ such that
\begin{equation}\label{eqn:42}
\scalar{\pi(f)h}{w} = \int f \dint{\mu^{(h,w)}}
\end{equation}
for each $f \in C^*(X,.)$ ($\scalarr$ is the scalar product of $\HHh$). Now for any $j,k \in
\{1,\ldots,n\}$ and each $A \in \Bb(X)$ there is a unique bounded operator on $\HHh$, denoted
by $E_{jk}(A)$, for which $\mu^{(h,w)}_{jk}(A) = \scalar{(E_{jk}(A))h}{w}\ (h,w \in \HHh)$. We put
$E(A) = [E_{jk}(A)] \in M_n(\BBb(\HHh))$. We want to show that $E(\uU.A) = \uU.E(A)$. Since
$\mu^{(h,w)} \in \MmM(X,.)$, we obtain
\begin{multline*}
\scalar{(E_{pq}(\uU.A))h}{w} = (\mu^{(h,w)}(\uU.A))_{pq} = (\uU.\mu^{(h,w)}(A))_{pq} =
\sum_{j,k} \uU_{pj}\bar{\uU}_{qk} \cdot \mu^{(h,w)}_{jk}(A)\\
= \sum_{j,k} \uU_{pj}\bar{\uU}_{qk} \cdot \scalar{(E_{jk}(A))h}{w} = \scalar{(\uU.E(A))_{pq} h}{w},
\end{multline*}
which shows that indeed $E(\uU.A) = \uU.E(A)$. Further, observe that $E_{jk}^{(h,w)} =
\mu^{(h,w)}_{jk}$ and thus $E$ is an operator-valued regular $n$-measure and
$\scalar{(\int f \dint{E})h}{w} = \scalar{\pi(f)h}{w}$ (thanks to \eqref{eqn:42}). Consequently,
$\int f \dint{E} = \pi(f)$ and we are done.\par
The uniqueness of $E$ follows from the above construction and its proof is left to the reader.
\end{proof}

\begin{exm}{spectral-n}
Let $(X,.)$ be an $n$-space and let $E = [E_{jk}]\dd \Bb(X) \to M_n(\BBb(\HHh))$ be a spectral
$n$-measure. We denote by $\Bb_{inv}(X)$ the $\sigma$-algebra of all invariant Borel subsets of $X$
(that is, $A \in \Bb(X)$ belongs to $\Bb_{inv}(X)$ iff $\uU.A = A$ for any $\uU \in \Uu_n$). Let
$F\dd \Bb_{inv}(X) \ni A \mapsto \sum_j E_{jj}(A) \in \BBb(\HHh)$. Then for every $A \in
\Bb_{inv}(X)$ one has:
\begin{enumerate}[(E1)]
\item $E_{jk}(A) = 0$ whenever $j \neq k$,
\item $E_{11}(A) = \ldots = E_{nn}(A) = \frac1n F(A)$
\end{enumerate}
and $F$ is a spectral measure (possibly with $F(X) \neq I_{\HHh}$ where $I_{\HHh}$ is the identity
operator on $\HHh$). Let us briefly prove these claims. Since $E(A) = E(\uU.A) = \uU.E(A)$ for any
$\uU \in \Uu_n$, conditions (E1)--(E2) are fulfilled. Further, if $j_A\dd X \to M_n$ is given
by $j_A(x) = \chi_A(x) \cdot I$ where $I \in M_n$ is the unit matrix, then $j_A \in \Bb C^*(X,.)$
and for $B \in \Bb_{inv}(X)$, $F(A \cap B) = \int j_{A \cap B} \dint{E} = \int j_A \cdot j_B
\dint{E} = \int j_A \dint{E} \cdot \int j_B \dint{E} = F(A) F(B)$. What is more, point (a)
of \PRO{n-spectral} implies that $F(A)$ is selfadjoint and hence $F$ is indeed a spectral measure.
One may also easily check that $F(X) = I_{\HHh}$ iff the representation $\pi_E\dd C^*(X,.) \ni f
\mapsto \int f \dint{E} \in \BBb(\HHh)$ is nondegenerate.\par
The spectral measure $F$ defined above corresponds to the representation of the center $Z$
of $C^*(X,.)$. It is a simple exercise that $Z$ consists precisely of all $f \in \CCc_0(X,\CCC
\cdot I)$ which are constant on the sets of the form $\Uu_n.\{x\}\ (x \in X)$. Thus, $\Bb_{inv}(X)$
may naturally be identified with the Borel $\sigma$-algebra of the spectrum of $Z$ and consequently
$F$ is the spectral measure induced by the representation $\pi_E\bigr|_Z$ of $Z$.\par
Conditions (E1)--(E2) show that a nonzero spectral $n$-measure $E$ for $n > 1$ \textit{never}
satisfies the condition of a spectral measure --- that $E(A \cap B) = E(A) E(B)$. Indeed, $E(X) \neq
(E(X))^2$.
\end{exm}

The next result is well known. For reader convenience, we give its short proof.

\begin{lem}{irr}
Let $\AAa$ be a $C^*$-algebra and let $\pi\dd \AAa \to M_n$ \textup{(}where $n \geqsl 1$ is
finite\textup{)} be a nonzero irreducible representation of $\AAa$. Then $\pi$ is surjective.
\end{lem}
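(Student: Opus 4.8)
The plan is to identify the image $\MMm := \pi(\AAa)$ as a $*$-subalgebra of $M_n$ and to show that irreducibility forces $\MMm = M_n$. First I would note that $\MMm$ is a finite-dimensional $*$-subalgebra of $M_n$, hence automatically norm-closed (and in any case $\pi$ has closed range, as recalled earlier in the paper), so $\MMm$ is a $C^*$-subalgebra of $M_n$. Next I would verify that the action of $\MMm$ on $\CCC^n$ is nondegenerate: the set of all $v \in \CCC^n$ with $Av = 0$ for every $A \in \MMm$ is an $\MMm$-invariant subspace, and it is not all of $\CCC^n$ because $\pi \neq 0$; by irreducibility it is $\{0\}$. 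Since $\MMm$, being a finite-dimensional $C^*$-algebra, possesses a unit $e$ (a selfadjoint idempotent) satisfying $A(I-e)=0$ for all $A \in \MMm$, nondegeneracy gives $I - e = 0$, so $I \in \MMm$.

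The heart of the argument is Schur's lemma, namely that the commutant $\MMm' = \{T \in M_n \dd TA = AT \textup{ for all } A \in \MMm\}$ equals $\CCC \cdot I$. I would prove this by taking $T \in \MMm'$ and writing $T = T_1 + iT_2$ with $T_1,T_2$ selfadjoint; since $\MMm$ is $*$-closed, its commutant is $*$-closed, so $T_1,T_2 \in \MMm'$. For a selfadjoint $S \in \MMm'$, each spectral projection of $S$ is a polynomial in $S$ and therefore commutes with $\MMm$, so its range is $\MMm$-invariant; irreducibility makes every such projection trivial, whence $S$ has a single eigenvalue and $S = \lambda I$. Thus $T_1,T_2$, and hence $T$, are scalars. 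With $\MMm$ a unital $*$-subalgebra of the finite-dimensional algebra $M_n$ (so weakly closed), the double commutant theorem then yields $\MMm = \MMm'' = (\CCC \cdot I)' = M_n$, which is exactly the surjectivity of $\pi$.

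As an alternative to the commutant computation one may instead invoke Burnside's theorem: a subalgebra of $M_n(\CCC)$ admitting no nontrivial invariant subspace coincides with $M_n$, and irreducibility supplies precisely that hypothesis. All of these steps are standard; the only point deserving a moment's care is the verification that $I \in \MMm$ (equivalently, that the representation is nondegenerate), since this is what licenses the bicommutant theorem and prevents $\MMm$ from being merely a full matrix algebra supported on a proper subspace of $\CCC^n$.
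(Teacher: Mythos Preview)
Your proof is correct and follows essentially the same strategy as the paper: use Schur's lemma to identify the commutant of $\pi(\AAa)$ as the scalars, then invoke the double commutant theorem. The only difference is cosmetic---you first secure $I \in \pi(\AAa)$ via a nondegeneracy argument and then apply $\MMm = \MMm''$ directly, whereas the paper uses the non-unital form $J'' = J + \CCC \cdot I$ and concludes by noting that $J$ is a nonzero two-sided ideal in $M_n$.
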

\begin{proof}
Let $J = \pi(\AAa)$. Since $\pi$ is irreducible, $J' = \CCC \cdot I$ and consequently $J'' = M_n$.
But it follows from von Neumann's double commutant theorem that $J'' = J + \CCC \cdot I$ (here
we use the fact that $n$ is finite). So, the facts that $J$ is a $*$-algebra and $M_n = J + \CCC
\cdot I$ imply that $J$ is a two-sided ideal in $M_n$. Consequently, $J = \{0\}$ or $J = M_n$. But
$\pi \neq 0$ and hence $J = M_n$.
\end{proof}

With the aid of the above lemma and \THM{represent} we shall now characterize all irreducible
representations of $C^*(X,.)$.

\begin{pro}{irr}
Every nonzero irreducible representation $\pi$ of $C^*(X,.)$ \textup{(}where $(X,.)$ is
an $n$-space\textup{)} is $n$-dimensional and has the form $\pi(f) = f(x)$ \textup{(}for some $x \in
X$\textup{)}.
\end{pro}
\begin{proof}
Let $\pi\dd C^*(X,.) \to \BBb(\HHh)$ be a nonzero irreducible representation. It follows from
\THM{represent} that there is a spectral $n$-measure $E\dd \Bb(X) \to \BBb(\HHh)$ such that
\eqref{eqn:represent} holds. Let $\Bb_{inv}(X)$ and $F\dd \Bb_{inv}(X) \to \BBb(\HHh)$ be
as in \EXM{spectral-n}. Then $F$ is a (regular) spectral measure (with $F(X) = I_{\HHh}$ because
$\pi$ is nondegenerate) and for any $A \in \Bb_{inv}(X)$ and $f \in C^*(X,.)$ we have $\int f
\dint{E} \cdot \int \chi_A I \dint{E} = \int \chi_A I \dint{E} \cdot \int f \dint{E}$ (where $I$ is
the unit $n \times n$-matrix). Since $\pi$ is irreducible, we deduce that for every $A \in
\Bb_{inv}(X)$, $\int \chi_A I \dint{E}$ is a scalar multiple of the identity operator on $\HHh$.
This implies that we may think of $F$ as a complex-valued (spectral) measure. But $\Bb_{inv}(X)$ is
naturally `isomorphic' to the $\sigma$-algebra of all Borel sets of $X/\Uu_n$ (which is locally
compact) and thus $F$ is supported on a set $S := \Uu_n.a$ for some $a \in X$. But then $\int \chi_X
I \dint{E} = \int \chi_S I \dint{E}$ and consequently $\pi(f) = \int f\bigr|_S \dint{E_S}$ where
$E_S$ is the restriction of $E$ to $\Bb(S)$. Since the vector space $\{f\bigr|_S\dd\ f \in
C^*(X,.)\}$ is finite dimensional (and its dimension is equal to $n^2$), we infer that $\AAa :=
\pi(C^*(X,.))$ is finite dimensional as well and $\dim \AAa \leqsl n^2$. So, the irreducibility
of $\pi$ implies that $\HHh$ is finite dimensional, while \LEM{irr} shows that $\dim \HHh \leqsl n$.
Finally, point (b) of \PRO{repr} completes the proof.
\end{proof}

\SECT{Homogeneous $C^*$-algebras}

\begin{dfn}{pure}
A $C^*$-algebra is said to be \textit{$n$-homogeneous} (where $n$ is finite) iff every its nonzero
irreducible representation is $n$-dimensional.
\end{dfn}

Our version of Fell's characterization of $n$\hyp{}homogeneous $C^*$-algebras \cite{fe3} reads
as follows.

\begin{thm}{homog}
For a $C^*$-algebra $\AAa$ and finite $n \geqsl 1$ \tfcae
\begin{enumerate}[\upshape(i)]
\item $\AAa$ is an $n$-homogeneous $C^*$-algebra;
\item there is an $n$-space $(X,.)$ such that $\AAa$ is isomorphic
   \textup{(}as a $C^*$-algebra\textup{)} to $C^*(X,.)$.
\end{enumerate}
What is more, if $\AAa$ is $n$-homogeneous, the $n$-space $(X,.)$ appearing in \textup{(ii)} is
unique up to isomorphism.
\end{thm}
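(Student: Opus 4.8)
Two of the required implications are immediate from earlier results, so the plan is to dispose of them first. If $\AAa$ is isomorphic to $C^*(X,.)$ for some $n$-space $(X,.)$, then every nonzero irreducible representation of $\AAa$ transports to one of $C^*(X,.)$, which is $n$-dimensional by \PRO{irr}; hence $\AAa$ is $n$-homogeneous, giving (ii)$\Rightarrow$(i). For the uniqueness clause, if $\AAa$ is isomorphic to both $C^*(X,.)$ and $C^*(Y,*)$, then $C^*(X,.)$ and $C^*(Y,*)$ are isomorphic $C^*$-algebras, so $(X,.)$ and $(Y,*)$ are isomorphic $n$-spaces by point (c) of \PRO{repr}. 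Thus everything reduces to constructing, for an $n$-homogeneous $\AAa$, an $n$-space $(X,.)$ with $\AAa \cong C^*(X,.)$.

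The plan is to take $X$ to be the set of all nonzero irreducible representations $\pi\dd \AAa \to M_n$ --- equivalently, by \LEM{irr}, the set of all surjective $*$-homomorphisms $\AAa \to M_n$ --- equipped with the topology of pointwise norm convergence, and to let $\Uu_n$ act by $(\uU.\pi)(a) = \uU.(\pi(a))$. First I would verify that $(X,.)$ is an $n$-space. Writing $\widetilde{X} = X \cup \{0\}$ for the set of \emph{all} $*$-homomorphisms $\AAa \to M_n$ (the zero one included), one sees that $\widetilde{X}$ is a closed subset of the compact product $\prod_{a \in \AAa} \{A \in M_n\dd\ \|A\| \leqsl \|a\|\}$, because a pointwise limit of $*$-homomorphisms is again one; hence $\widetilde{X}$ is compact Hausdorff and $X = \widetilde{X} \setminus \{0\}$ is open, therefore locally compact Hausdorff. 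The action is continuous because $(\uU,A) \mapsto \uU.A$ is, and it is free: $\uU.\pi = \pi$ forces the representing unitary to commute with $\pi(\AAa) = M_n$, hence to be a scalar, so $\uU = \jJ$.

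Next I would introduce the Gelfand-type transform $\Gamma\dd \AAa \to \CCc_0(X,M_n)$, $\Gamma(a) = \hat{a}$, $\hat{a}(\pi) = \pi(a)$. By the very definition of the topology on $X$ each $\hat{a}$ is continuous, the identity $\hat{a}(\uU.\pi) = \uU.\hat{a}(\pi)$ shows it is equivariant, and $\hat{a}$ vanishes at infinity because for $\epsi > 0$ the set $\{\pi\dd\ \|\pi(a)\| \geqsl \epsi\}$ is a closed subset of the compact space $\widetilde{X}$ not containing $0$, hence compact; thus $\Gamma$ maps $\AAa$ into $C^*(X,.)$. Moreover $\Gamma$ is a $*$-homomorphism and is isometric, since $\|\hat{a}\| = \sup_{\pi \in X} \|\pi(a)\| = \|a\|$ by the standard fact that the norm of any element of a $C^*$-algebra is the supremum of the norms of its images under irreducible representations (all $n$-dimensional here). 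Consequently $\EeE := \Gamma(\AAa)$ is a closed $*$-subalgebra of $\CCc_0(X,M_n)$ isometrically isomorphic to $\AAa$, and it remains only to prove $\EeE = C^*(X,.)$.

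The heart of the argument is an application of \THM{SW-full} to $\EeE$, with coefficient algebra $M_n$. Condition (AX0) holds because each $\pi \in X$ is surjective, so $\hat{a}(\pi) = \pi(a)$ is invertible for a suitable $a$. For two points $\pi_1,\pi_2 \in X$ I would distinguish whether they lie in the same $\Uu_n$-orbit. If $\pi_2 = \uU.\pi_1$, then $\hat{a}(\pi_2) = \uU.\hat{a}(\pi_1)$ is unitarily conjugate to $\hat{a}(\pi_1)$ for every $a$, so these have equal spectra and (AX2) holds. If $\pi_1$ and $\pi_2$ are inequivalent, then the joint map $a \mapsto (\pi_1(a),\pi_2(a))$ is onto $M_n \times M_n$ --- the key algebraic input, obtained from the double commutant theorem for two inequivalent irreducibles exactly as in the proof of \LEM{irr} (the image is a two-sided ideal surjecting onto each factor) --- so one may pick $a$ with $\pi_1(a) = 0$ and $\pi_2(a) = I$, a spectral separation verifying (AX1). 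Hence \THM{SW-full} yields $\EeE = \overline{\EeE} = \Delta_2(\EeE)$. Finally I would check $C^*(X,.) \subseteq \Delta_2(\EeE)$: given $u \in C^*(X,.)$ and two points, the same case distinction produces a single $a \in \AAa$ with $\pi_i(a) = u(\pi_i)$ for $i=1,2$ --- using surjectivity of the joint map in the inequivalent case, and, in the equivalent case $\pi_2 = \uU.\pi_1$, choosing $a$ with $\pi_1(a) = u(\pi_1)$ and invoking $u(\pi_2) = \uU.u(\pi_1)$ to get $\pi_2(a) = u(\pi_2)$ automatically. Thus $C^*(X,.) \subseteq \EeE = \Gamma(\AAa) \subseteq C^*(X,.)$, so $\Gamma$ is an isomorphism onto $C^*(X,.)$. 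I expect the main obstacle to be precisely the surjectivity of $a \mapsto (\pi_1(a),\pi_2(a))$ for inequivalent irreducibles, which drives both the spectral separation and the two-point interpolation, together with the bookkeeping identifying $\widetilde{X}$ as a compactification that delivers local compactness and vanishing at infinity.
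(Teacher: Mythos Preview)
Your proof is correct and follows the same overall strategy as the paper: (ii)$\Rightarrow$(i) from \PRO{irr}, uniqueness from \PRO{repr}(c), and (i)$\Rightarrow$(ii) by taking $X$ to be the space of nonzero $n$-dimensional representations with the pointwise topology and the natural $\Uu_n$-action, forming the Gelfand-type transform $a\mapsto\hat a$, and applying \THM{SW-full} with the same orbit/non-orbit case split to identify its range with $C^*(X,.)$. The only noticeable difference is in the separation step for inequivalent $\pi_1,\pi_2$: the paper reaches \eqref{eqn:21} via single generation of the finite-dimensional image $\MMm\subset M_{2n}$ and Schur's lemma for the generating matrix, whereas you appeal directly to the double commutant of $\pi_1\oplus\pi_2$ (Schur on the off-diagonal blocks yields $(\pi_1\oplus\pi_2)(\AAa)''=M_n\oplus M_n$, hence the image is an ideal surjecting onto each factor)---a mild streamlining, not a genuinely different route.
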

\begin{proof}[Proof of \THM{homog}]
We infer from point (c) of \PRO{repr} that the $n$-space $(X,.)$ appearing in (ii) is unique
up to isomorphism. What is more, it easily follows from \PRO{irr} that $C^*(X,.)$ is
$n$-homogeneous for any $n$-space $(X,.)$. So, it remains to show that (i) implies (ii). To this
end, assume $\AAa$ is $n$-homogeneous and let $\Xx$ be the set of all representations (including
the zero one) $\pi\dd \AAa \to M_n$, equipped with the topology of pointwise convergence. Since each
representation is a bounded linear operator of norm not greater than $1$, $\Xx$ is compact.
Consequently, $X := \Xx \setminus \{0\}$ is locally compact. We define an action of $\Uu_n$ on $X$
by the formula: $(\uU.\pi)(a) = \uU.\pi(a)\ (a \in \AAa,\ \pi \in X,\ \uU \in \Uu_n)$. It is easily
seen that the action is continuous. What is more, \LEM{irr} ensures us that it is free as well. So,
$(X,.)$ is an $n$-space. The next step of construction is very common. For any $a \in \AAa$ let
$\widehat{a}\dd X \to M_n$ be given by $\widehat{a}(\pi) = \pi(a)$. It is clear that $\widehat{a}
\in \CCc_0(X,M_n)$ (indeed, if $X$ is noncompact, then $\Xx = X \cup \{0\}$ is a one-point
compactification of $X$ and $\widehat{a}$ extends to a map on $\Xx$ which vanishes at $0$). We also
readily have $\widehat{a}(\uU.\pi) = \uU.\widehat{a}(\pi)$ for any $\uU \in \Uu_n$. So, we have
obtained a $*$-homomorphism $\Phi\dd \AAa \ni a \mapsto \widehat{a} \in C^*(X,.)$. It follows from
(i) (and the fact that all irreducible representations separate points of a $C^*$-algebra) that
$\Phi$ is one-to-one and, consequently, $\Phi$ is isometric. So, to end the proof, it suffices
to show that $\EeE = \Phi(\AAa)$ is dense in $C^*(X,.)$. To this end, we involve \THM{SW-full}.
It follows from \LEM{irr} that condition (AX0) is fulfilled. Further, let $\pi_1$ and $\pi_2$ be
arbitrary members of $X$. We consider two cases. First assume that $\pi_2 = \uU.\pi_1$ for some $\uU
\in \Uu_n$. Then $\widehat{a}(\pi_2) = \uU.\widehat{a}(\pi_1)$ and consequently
$\sigma(\widehat{a}(\pi_1)) = \sigma(\widehat{a}(\pi_2))\ (a \in \AAa)$. So, in that case (AX2)
holds. Now assume that there is no $\uU \in \Uu_n$ for which $\pi_2 = \uU.\pi_1$. We shall show that
in that case:
\begin{equation}\label{eqn:21}
\pi_1(a) = 0 \qquad \textup{and} \quad \pi_2(a) = I \qquad \textup{for some } a \in \AAa.
\end{equation}
Let $\MMm \subset M_{2n}$ consist of all matrices of the form
$\begin{pmatrix}\pi_1(x) & 0\\0 & \pi_2(x)\end{pmatrix}$ with $x \in \AAa$. Since $\MMm$ is
a finite-dimensional $C^*$-algebra, it is singly generated (see e.g. \cite{sai}) and unital (cf.
\cite[\S1.11]{tak}). Thanks to \LEM{irr}, $\MMm$ contains matrices of the form
$\begin{pmatrix}I & 0\\0 & A\end{pmatrix}$ and $\begin{pmatrix}B & 0\\0 & I\end{pmatrix}$ (for some
$A, B \in M_n$). We conclude that the unit of $\MMm$ coincides with the unit of $M_{2n}$. This,
combined with the fact that $\MMm$ is singly generated, yields that there is $z \in \AAa$ such that
for $A_j = \pi_j(z)\ (j=1,2)$ we have
$$\MMm = \left\{\begin{pmatrix}p(A_1,A_1^*) & 0\\0 & p(A_2,A_2^*)\end{pmatrix}\dd\
p \in \PPp\right\}$$
where $\PPp$ is the free algebra of all polynomials in two noncommuting variables. Observe that then
$M_n = \pi_j(\AAa) = \{p(A_j,A_j^*)\dd\ p \in \PPp\}\ (j=1,2)$, which means that $A_1$ and $A_2$ are
irreducible matrices. What is more, $A_1$ and $A_2$ are not unitarily equivalent, that is, there is
no $\uU \in \Uu_n$ for which $A_2 = \uU.A_1$ (indeed, if $A_2 = \uU.A_1$, then $\pi_2 = \uU.\pi_1$,
since for every $x \in \AAa$ there is $p \in \PPp$ such that $\pi_j(x) = p(A_j,A_j^*)$). These two
remarks imply that $\begin{pmatrix}0 & 0\\ 0 & I\end{pmatrix} \in \MMm$, because the $*$-commutant
in $M_{2n}$ of the matrix $\begin{pmatrix}A_1 & 0\\0 & A_2\end{pmatrix}$ consists of matrices
of the form $\begin{pmatrix}X & 0\\0 & Y\end{pmatrix}$ (this follows from the so-called Schur's
lemma on intertwining transformations---see Theorem~1.5 in \cite{ern} and Corollary~1.8 there; cf.
also Proposition~5.2.1 in \cite{pn0}) and consequently $\begin{pmatrix}0 & 0\\ 0 & I\end{pmatrix}
\in \MMm'' = \MMm$. So, there is $a \in \AAa$ such that $\begin{pmatrix}0 & 0\\ 0 & I\end{pmatrix} =
\begin{pmatrix}\pi_1(a) & 0\\0 & \pi_2(a)\end{pmatrix}$, which gives \eqref{eqn:21}. Replacing $a$
by $\frac{a + a^*}{2}$, we may assume that $a$ is selfadjoint. Then $f = \widehat{a} \in \EeE$ is
selfadjoint (and hence normal) and $\sigma(f(\pi_1)) \cap \sigma(f(\pi_2)) = \varempty$, which shows
that $\pi_1$ and $\pi_2$ are spectrally separated by $\EeE$. According to \THM{SW-full},
it therefore suffices to check that each $g \in C^*(X,.)$ belongs to $\Delta_2(\EeE)$. To this
end, we fix $\pi_1, \pi_2 \in X$ and consider the same two cases as before. If $\pi_2 = \uU.\pi_1$,
it follows from \LEM{irr} that there is $x \in \AAa$ for which $\pi_1(x) = g(\pi_1)$. Then
$\widehat{x}(\pi_1) = g(\pi_1)$ and $\widehat{x}(\pi_2) = \uU.\widehat{x}(\pi_1) = \uU.g(\pi_1) =
g(\pi_2)$ and we are done.\par
Finally, if $\pi_2 \neq \uU.\pi_1$ for any $\uU \in \Uu_n$, \eqref{eqn:21} implies that there are
points $a_1$ and $a_2$ in $\AAa$ such that $\pi_1(a_1) = I = \pi_2(a_2)$ and $\pi_1(a_2) = 0 =
\pi_2(a_1)$. Moreover, there are points $x, y \in \AAa$ such that $\pi_1(x) = g(\pi_1)$ and
$\pi_2(y) = g(\pi_2)$ (by \LEM{irr}). Put $z = x a_1 + y a_2 \in \AAa$ and note that
$\widehat{z}(\pi_j) = g(\pi_j)$ for $j=1,2$, which means that $g \in \Delta_2(\EeE)$. The whole
proof is complete.
\end{proof}

\begin{dfn}{n-spectrum}
Let $\AAa$ be an $n$-homogeneous $C^*$-algebra. By an \textit{$n$-spectrum} of $\AAa$ we mean
any $n$-space $(X,.)$ such that $\AAa$ is isomorphic to $C^*(X,.)$. It follows from \THM{homog} that
an $n$-spectrum of $\AAa$ is unique up to isomorphism of $n$-spaces. By \textit{concrete
$n$-spectrum} of $\AAa$ we mean the $n$-space of all nonzero representations $\pi\dd \AAa \to M_n$
endowed with the pointwise convergence topology and the natural action of $\Uu_n$.\par
The trivial algebra $\{0\}$ is $n$-homogeneous and its $n$-spectrum is the empty $n$-space.
\end{dfn}

The reader interested in general ideas of operator spectra should consult \cite{fe1,fe2,fe3};
\cite{chi} as well as \cite{ha1,ha2}; \cite{p-s}; \S2.5 in \cite{ern}; \cite{kkl} or \cite{lee}.\par
Our approach to $n$-homogeneous $C^*$-algebras allows us to prove briefly the following

\begin{pro}{ext}
Let $\AAa_1$ and $\AAa_2$ be two $n$-homogeneous $C^*$-algebras such that $\AAa_1 \subset \AAa_2$.
\begin{enumerate}[\upshape(a)]
\item Every representation $\pi_1\dd \AAa_1 \to M_n$ is extendable to a representation $\pi_2\dd
   \AAa_2 \to M_n$.
\item If every $n$-dimensional representation \textup{(}including the zero one\textup{)} of $\AAa_1$
   has a unique extension to an $n$-dimensional representation of $\AAa_2$, then $\AAa_1 = \AAa_2$.
\end{enumerate}
\end{pro}
\begin{proof}
We begin with (a). We may and do assume that $\pi_1$ is nonzero. For $j=1,2$ let $(X_j,.)$ denote
an $n$-spectrum of $\AAa_j$ and let $\Psi_j\dd \AAa_j \to C^*(X_j,.)$ be a $*$-isomorphism
of $C^*$-algebras. Let $j\dd \AAa_1 \to \AAa_2$ be the inclusion map. Then $\Phi := \Psi_2 \circ
j \circ \Psi_1^{-1}\dd C^*(X_1,.) \to C^*(X_2,.)$ is a one-to-one $*$-homomorphism. We infer
from \PRO{repr} that there are an invariant open (in $X_2$) set $U$ and a morphism $\varphi\dd (U,.)
\to (X_1,.)$ such that \eqref{eqn:homo} holds. We claim that
\begin{equation}\label{eqn:22}
\varphi(U) = X_1.
\end{equation}
Since $\varphi$ is proper, the set $F := \varphi(U)$ is closed in $X_1$. It is also invariant. So,
if $F \neq X_1$, we may take $b \in X_1 \setminus F$ and apply point (c) of \LEM{ext} to get
a function $f \in C^*(X_1,.)$ such that $f\bigr|_F \equiv 0$ and $f(b) = I$. Then $\Phi(f) = 0$,
by \eqref{eqn:homo}, which contradicts the fact that $\Phi$ is one-to-one. So, \eqref{eqn:22} is
fulfilled.\par
Further, \PRO{repr} yields that there is $x \in X_1$ such that $\pi_1(\Psi_1^{-1}(f)) = f(x)$ for
any $f \in C^*(X_1,.)$. It follows from \eqref{eqn:22} that we may find $z \in U$ for which
$\varphi(z) = x$. Now define $\pi_2\dd \AAa_2 \to M_n$ by $\pi_2(a) = [\Psi_2(a)](z)\ (a \in
\AAa_2)$. It remains to check that $\pi_2$ extends $\pi_1$. To see this, for $a \in \AAa_1$ put $f =
\Psi_1(a)$ and note that $\pi_2(a) = [\Psi_2(a)](z) = [\Psi_2(\Psi_1^{-1}(f))](z) = [\Phi(f)](z) =
f(\varphi(z)) = f(x) = \pi_1(a)$ (cf. \eqref{eqn:homo}).\par
Now if the assumption of (b) is satisfied, the above argument shows that $\varphi$ is one-to-one
(since different points of $X_2$ correspond to different $n$-dimensional representations
of $\AAa_2$). It may also easily be checked that for every $z \in X_2 \setminus U$
the representation $\AAa_2 \ni a \mapsto [\Psi_2(a)](z) \in M_n$ vanishes on $\AAa_1$ (use
\eqref{eqn:homo} and the definition of $\Phi$). So, we conclude from the uniqueness of the extension
of the zero representation of $\AAa_1$ that $U = X_2$ and hence both $\varphi$ and $\Phi$ are
isomorphisms. Consequently, $\AAa_1 = \AAa_2$ and we are done.
\end{proof}

\SECT{Spectral theorem and $n$-functional calculus}

Whenever $\AAa$ is a unital $C^*$-algebra and $x_1,\ldots,x_k$ are arbitrary elements of $\AAa$,
let $C^*(x_1,\ldots,x_k)$ denote the $C^*$-subalgebra of $\AAa$ generated by $x_1,\ldots,x_k$
and let $C^*_1(x_1,\ldots,x_k)$ be the smallest $C^*$-subalgebra of $\AAa$ which contains
$x_1,\ldots,x_k$ as well as the unit of $\AAa$ (so, $C^*_1(x_1,\ldots,x_k) =
C^*(x_1,\ldots,x_k) + \CCC \cdot 1$ where $1$ is the unit of $\AAa$). We would like
to distinguish those systems $(x_1,\ldots,x_k)$ for which one of these two $C^*$-algebras defined
above is $n$-homogeneous. However, the property of being $n$-homogeneous is not hereditary for
$n > 1$. That is, when $n > 1$, \textit{every} nonzero $n$-homogeneous $C^*$-algebra contains
a $C^*$-subalgebra which is not $n$-homogeneous (namely, a nonzero commutative one). This causes
that the class of distinguished systems may depend on the choice of $C^*$-algebras related to them.
Fortunately, this does not happen, which is explained in the following

\begin{lem}{C*}
Let $\AAa$ be a unital $C^*$-algebra and $x_1,\ldots,x_k \in \AAa$. If $C^*_1(x_1,\ldots,x_k)$
is $n$-homogeneous for some $n > 1$, then $C^*_1(x_1,\ldots,x_k) = C^*(x_1,\ldots,x_k)$.
\end{lem}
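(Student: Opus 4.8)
The plan is to recognize $C^*(x_1,\ldots,x_k)$ as a closed two-sided ideal of $C^*_1(x_1,\ldots,x_k)$ of codimension at most one, and then to rule out codimension one by observing that it would produce a one-dimensional irreducible representation, which is impossible when $n > 1$. Write $\Bb := C^*(x_1,\ldots,x_k)$ and $\Bb_1 := C^*_1(x_1,\ldots,x_k) = \Bb + \CCC \cdot 1$, where $1$ is the unit of $\AAa$.

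First I would verify that $\Bb$ is a (closed, two-sided) ideal of $\Bb_1$. Indeed, given $b \in \Bb$ and $a = b_0 + \lambda \cdot 1 \in \Bb_1$ (with $b_0 \in \Bb$ and $\lambda \in \CCC$), we have $a b = b_0 b + \lambda b \in \Bb$ and $b a = b b_0 + \lambda b \in \Bb$, because $\Bb$ is a $*$-subalgebra. Hence the quotient $\Bb_1 / \Bb$ is a $C^*$-algebra, and since $\Bb_1 = \Bb + \CCC \cdot 1$ it is spanned by the coset of $1$; thus $\dim(\Bb_1 / \Bb) \leqsl 1$.

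Now I would argue by contradiction. Suppose $\Bb \neq \Bb_1$. Then $\dim(\Bb_1 / \Bb) = 1$, so the quotient $C^*$-algebra $\Bb_1 / \Bb$ is $*$-isomorphic to $\CCC$, and the composition $q \dd \Bb_1 \to \Bb_1 / \Bb \cong \CCC$ of the quotient map with this isomorphism is a nonzero $*$-homomorphism (it carries the unit to $1 \neq 0$). Viewing $\CCC$ as $\BBb(\CCC) = M_1$, we regard $q$ as a one-dimensional representation of $\Bb_1$. Since its underlying Hilbert space is one-dimensional, $q$ has no nontrivial invariant subspaces and is therefore irreducible. Thus $\Bb_1$ admits a nonzero irreducible representation of dimension $1$, contradicting the hypothesis that $\Bb_1$ is $n$-homogeneous with $n > 1$ (for which every nonzero irreducible representation acts on an $n$-dimensional Hilbert space with $n \geqsl 2$). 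Hence $\Bb = \Bb_1$, as desired.

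The argument is elementary and uses none of the $n$-space machinery developed earlier; the only genuine content is the observation that $\Bb$ is automatically an ideal in $\Bb_1$ (so the codimension is controlled purely by the adjunction of the unit and the quotient is again a $C^*$-algebra), combined with the elementary fact that a nonzero one-dimensional representation is irreducible. I do not anticipate a real obstacle once these two points are in place. The one subtlety worth flagging is the paper's convention that a representation is any $*$-homomorphism into some $\BBb(\HHh)$ (not necessarily unital), so the character $q$ legitimately counts as a representation and the collision with $n$-homogeneity is immediate.
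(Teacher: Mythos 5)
Your proposal is correct and is essentially identical to the paper's own argument: both observe that $C^*(x_1,\ldots,x_k)$ is a two-sided ideal in $C^*_1(x_1,\ldots,x_k) = C^*(x_1,\ldots,x_k) + \CCC\cdot 1$, so that a proper containment would make the quotient isomorphic to $\CCC$ and hence produce a nonzero one-dimensional irreducible representation, contradicting $n$-homogeneity for $n>1$. You merely spell out the ideal verification and the codimension count in slightly more detail than the paper does.
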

\begin{proof}
Suppose, for the contrary, that the assertion is false. Observe that $\IIi :=
C^*(x_1,\ldots,x_k)$ is a two-sided ideal in $\BBb := C^*_1(x_1,\ldots,x_k)$, since $\BBb =
\IIi + \CCC \cdot 1$ ($1$ = the unit of $\AAa$). Moreover, $\BBb / \IIi$ is isomorphic
(as a $C^*$-algebra) to $\CCC$, which means that the canonical projection $\pi\dd \BBb \to \BBb /
\IIi$ may be considered as a $1$-dimensional (nonzero) representation. It is obviously irreducible,
which contradicts the fact that $\BBb$ is $n$-homogeneous (since $n > 1$).
\end{proof}

Taking into account the above result, we may now introduce

\begin{dfn}{sys-In}
A system $(x_1,\ldots,x_k)$ of elements of an (unnecessarily unital) $C^*$-algebra $\AAa$ is said
to be \textit{$n$-homogeneous} (where $n \geqsl 1$ is finite) if the $C^*$-subalgebra
$C^*(x_1,\ldots,x_k)$ of $\AAa$ generated by $x_1,\ldots,x_k$ is $n$-homogeneous.
\end{dfn}

This part of the paper is devoted to studies of (finite) $n$-homogeneous systems. We begin with

\begin{pro}{1-1}
Let $(x_1,\ldots,x_k)$ be an $n$-homogeneous system in a $C^*$-algebra $\AAa$. Let $(\Xx,.)$ be
the concrete $n$-spectrum of $C^*(x_1,\ldots,x_k)$ and let
\begin{equation}\label{eqn:spectrum}
\sigma_n(x_1,\ldots,x_k) := \{(\pi(x_1),\ldots,\pi(x_k))\dd\ \pi \in \Xx\}
\end{equation}
be equipped with the topology inherited from $(M_n)^k$ and with the action $$\uU.(A_1,\ldots,A_k) :=
(\uU.A_1,\ldots,\uU.A_k) \qquad (\uU \in \Uu_n,\ (A_1,\ldots,A_k) \in \sigma_n(x_1,\ldots,x_k)).$$
\begin{enumerate}[\upshape(Sp1)]
\item The pair $(\sigma_n(x_1,\ldots,x_k),.)$ is an $n$-space.
\item The function $H\dd (\Xx,.) \ni \pi \mapsto (\pi(x_1),\ldots,\pi(x_k)) \in
   (\sigma_n(x_1,\ldots,x_k),.)$ is an isomorphism of $n$-spaces.
\item Every member of $\sigma_n(x_1,\ldots,x_k)$ is irreducible; that is, if $(A_1,\ldots,A_k) \in
   \sigma_n(x_1,\ldots,x_k)$ and $T \in M_n$ commutes with each of $A_1,A_1^*,\ldots,A_k,A_k^*$,
   then $T$ is a scalar multiple of the unit matrix.
\item The set $\sigma_n(x_1,\ldots,x_k)$ is either compact or its closure in $(M_n)^k$ coincides
   with $\sigma_n(x_1,\ldots,x_k) \cup \{0\}$.
\end{enumerate}
\end{pro}
\begin{proof}
Let $\pi_0\dd \AAa \to M_n$ be the zero representation and let $\Omega = \Xx \cup \{\pi_0\}$ be
equipped with the pointwise convergence topology. Then $\Omega$ is compact (cf. the proof
of \THM{homog}). If $\pi_1, \pi_2 \in \Omega$, then the set $\{x \in C^*(x_1,\ldots,x_k)\dd\
\pi_1(x) = \pi_2(x)\}$ is a $C^*$-subalgebra of $C^*(x_1,\ldots,x_k)$. This implies that
the function $\widetilde{H}\dd \Omega \ni \pi \mapsto (\pi(x_1),\ldots,\pi(x_k)) \in
\sigma_n(x_1,\ldots,x_k) \cup \{0\}$ is one-to-one. It is obviously seen that $\widetilde{H}$ is
surjective and continuous. Consequently, $\widetilde{H}$ is a homeomorphism (since $\Omega$ is
compact). This proves (Sp4) and shows that $\sigma_n(x_1,\ldots,x_k)$ is locally compact. It is also
clear that $H(\uU.\pi) = \uU.H(\pi)$, which is followed by (Sp1) and (Sp2). Finally, for any $\pi
\in \Xx$, $C^*(\pi(x_1),\ldots,\pi(x_k)) = \pi(C^*(x_1,\ldots,x_k)) = M_n$ (see \LEM{irr}),
which yields (Sp3) and completes the proof.
\end{proof}

\begin{dfn}{n-spectrum-sys}
Let $(x_1,\ldots,x_k)$ be an $n$-homogeneous system in a $C^*$-algebra. The $n$-space
$(\sigma_n(x_1,\ldots,x_k),.)$ defined by \eqref{eqn:spectrum} is said to be
the \textit{$n$-spectrum} of $(x_1,\ldots,x_k)$. According to \PRO{1-1}, the $n$-spectrum
of $(x_1,\ldots,x_k)$ is an $n$-spectrum of $C^*(x_1,\ldots,x_k)$.
\end{dfn}

\begin{pro}{uniq}
Let $\textup{\xXx} = (x_1,\ldots,x_k)$ be an $n$-homogeneous system in a $C^*$-algebra. There
exists a unique $*$-homomorphism
$$\Phi_{\textup{\xXx}}\dd C^*(\sigma_n(\textup{\xXx}),.) \to C^*(\textup{\xXx})$$
such that $\Phi_{\textup{\xXx}}(p_j) = x_j$ where $p_j\dd \sigma_n(\textup{\xXx}) \ni
(A_1,\ldots,A_k) \mapsto A_j \in M_n\ (j=1,\ldots,k)$. Moreover, $\Phi_{\textup{\xXx}}$ is
a $*$-isomorphism of $C^*$-algebras.
\end{pro}
\begin{proof}
Let $(\Xx,.)$ be the concrete $n$-spectrum of $C^*(\textup{\xXx})$ and let $H\dd (\Xx,.) \to
(\sigma_n(\textup{\xXx}),.)$ be the isomorphism as in point (Sp2) of \PRO{1-1}. For $x \in
C^*(\textup{\xXx})$ let $\widehat{x} \in \CCc(\Xx,.)$ be given by $\widehat{x}(\pi) = \pi(x)$.
The proof of \THM{homog} shows that the function $C^*(\textup{\xXx}) \ni x \mapsto \widehat{x} \in
C^*(\Xx,.)$ is a $*$-isomorphism of $C^*$-algebras. Consequently, $\Psi\dd
C^*(\textup{\xXx}) \ni x \mapsto \widehat{x} \circ H^{-1} \in C^*(\sigma_n(\textup{\xXx}),.)$
is a $*$-isomorphism as well. A direct calculation shows that $\Psi(x_j) = p_j\ (j=1,\ldots,k)$.
This implies that $C^*(p_1,\ldots,p_k) = C^*(\sigma_n(\textup{\xXx}),.)$, from which we infer
the uniqueness of $\Phi_{\textup{\xXx}}$. To convince about its existence, just put
$\Phi_{\textup{\xXx}} = \Psi^{-1}$.
\end{proof}

We are now ready to introduce

\begin{dfn}{calc}
Let $\textup{\xXx} = (x_1,\ldots,x_k)$ be an $n$-homogeneous system and let $\Phi_{\textup{\xXx}}$
be as in \PRO{uniq}. For every $f \in C^*(\sigma_n(x_1,\ldots,x_k),.)$ we denote
by $f(x_1,\ldots,x_k)$ the element $\Phi_{\textup{\xXx}}(f)$. The assignment $f \mapsto
f(x_1,\ldots,x_k)$ is called the \textit{$n$-functional calculus}.
\end{dfn}

The reader familiar with functional calculus on normal operators (or normal elements
in $C^*$-algebras) has to be careful with the $n$-functional calculus, because its main
disadvantage is that its values are not $n$-homogeneous elements in general. Therefore we cannot
speak of the $n$-spectrum of $f(x_1,\ldots,x_k)$ in general. What is more, it may happen that
$\sigma_n(x_1,\ldots,x_k)$ is compact, but $j(x_1,\ldots,x_k)$, where $j$ is constantly equal
to the unit matrix, differs from the unit of the underlying $C^*$-algebra $\AAa$ from which
$x_1,\ldots,x_k$ were taken. This happens precisely when $C^*(x_1,\ldots,x_k)$ has unit, but this
unit is not the unit of $\AAa$.\par
As a consequence of \PRO{uniq} and \THM{represent} we obtain the \textit{spectral} theorem (for
$n$-homogeneous systems) announced before.

\begin{thm}{spectral}
Let $\tTT = (T_1,\ldots,T_k)$ be an $n$-homogeneous system of bounded linear operators acting
on a Hilbert space $\HHh$. There exists a unique spectral $n$-measure $E_{\tTT}\dd
\Bb(\sigma_n(\tTT)) \to M_n(\BBb(\HHh))$ such that
$$\int p_j \dint{E_{\tTT}} = T_j \qquad (j=1,\ldots,k)$$
where $p_j\dd \sigma_n(\tTT) \ni (A_1,\ldots,A_k) \mapsto A_j \in M_n$.
\end{thm}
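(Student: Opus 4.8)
The plan is to realize this theorem as a direct consequence of \PRO{uniq} and \THM{represent}: I would promote the inclusion $C^*(\tTT) \subset \BBb(\HHh)$ to a representation of $C^*(\sigma_n(\tTT),.)$ and then simply read off the spectral $n$-measure supplied by \THM{represent}. Before doing so, I would record the preliminary fact that each coordinate projection $p_j$ genuinely belongs to $C^*(\sigma_n(\tTT),.)$. The equivariance $p_j(\uU.(A_1,\ldots,A_k)) = \uU.A_j = \uU.p_j(A_1,\ldots,A_k)$ is immediate, so the only thing to verify is that $p_j$ vanishes at infinity. This is exactly what (Sp4) of \PRO{1-1} delivers: either $\sigma_n(\tTT)$ is compact, in which case there is nothing to check, or its closure in $(M_n)^k$ is $\sigma_n(\tTT) \cup \{0\}$, which forces $A_j \to 0$ as $(A_1,\ldots,A_k)$ escapes every compact set. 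Hence $p_j \in C^*(\sigma_n(\tTT),.)$ and the expression $\int p_j \dint{E_{\tTT}}$ is meaningful.

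For the existence part I would let $\iota\dd C^*(\tTT) \to \BBb(\HHh)$ be the inclusion (a representation of $C^*(\tTT)$ on $\HHh$) and set $\rho := \iota \circ \Phi_{\tTT}$, where $\Phi_{\tTT}\dd C^*(\sigma_n(\tTT),.) \to C^*(\tTT)$ is the $*$-isomorphism of \PRO{uniq} satisfying $\Phi_{\tTT}(p_j) = x_j$ (here the generators are the operators $T_j$). Since $\sigma_n(\tTT)$ is an $n$-space by (Sp1) of \PRO{1-1} and $\rho\dd C^*(\sigma_n(\tTT),.) \to \BBb(\HHh)$ is a composition of $*$-homomorphisms, hence a representation of $C^*(X,.)$ with $X = \sigma_n(\tTT)$, \THM{represent} yields a unique spectral $n$-measure $E_{\tTT}\dd \Bb(\sigma_n(\tTT)) \to M_n(\BBb(\HHh))$ with $\rho(f) = \int f \dint{E_{\tTT}}$ for every $f \in C^*(\sigma_n(\tTT),.)$. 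Evaluating at $f = p_j$ then gives $\int p_j \dint{E_{\tTT}} = \rho(p_j) = \iota(\Phi_{\tTT}(p_j)) = \iota(T_j) = T_j$, which is precisely the asserted identity.

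For uniqueness I would suppose $E'$ is any spectral $n$-measure with $\int p_j \dint{E'} = T_j$ for each $j$, and consider the representation $\rho'\dd f \mapsto \int f \dint{E'}$ of $C^*(\sigma_n(\tTT),.)$. It agrees with $\rho$ on each generator $p_j$; since $C^*(p_1,\ldots,p_k) = C^*(\sigma_n(\tTT),.)$ (noted inside the proof of \PRO{uniq}), two $*$-homomorphisms coinciding on $p_1,\ldots,p_k$ coincide on the whole algebra, so $\rho' = \rho$. The uniqueness clause of \THM{represent} then forces $E' = E_{\tTT}$.

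I expect no serious obstacle here, because essentially all the analytic content has already been packaged into \THM{represent} and \PRO{uniq}. The only genuine verification is the membership $p_j \in C^*(\sigma_n(\tTT),.)$ through the vanishing-at-infinity dichotomy (Sp4); the one point that demands a little care is phrasing the uniqueness against the full algebra rather than merely against the generators, which is handled by the generation statement $C^*(p_1,\ldots,p_k) = C^*(\sigma_n(\tTT),.)$ from \PRO{uniq}.
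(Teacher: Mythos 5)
Your proposal is correct and follows exactly the route the paper intends: the paper offers no written proof, stating only that the theorem is ``a consequence of \PRO{uniq} and \THM{represent}'', and your argument (compose $\Phi_{\tTT}$ with the inclusion into $\BBb(\HHh)$, invoke \THM{represent}, evaluate at the generators $p_j$, and derive uniqueness from $C^*(p_1,\ldots,p_k)=C^*(\sigma_n(\tTT),.)$) is precisely that derivation, with the membership $p_j\in C^*(\sigma_n(\tTT),.)$ via (Sp4) correctly checked along the way.
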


\begin{dfn}{extend}
Let $\tTT = (T_1,\ldots,T_k)$ be an $n$-homogeneous system of bounded Hilbert space operators and
let $E_{\tTT}$ be the spectral $n$-measure as in \THM{spectral}. $E_{\tTT}$ is called
the \textit{spectral $n$-measure of $\tTT$} and the assignment $\Bb C^*(\sigma(\tTT),.) \ni f
\mapsto f(T_1,\ldots,T_n) := \int f \dint{E_{\tTT}} \in \BBb(\HHh)$ is called
the \textit{extended $n$-functional calculus}.
\end{dfn}

There is nothing surprising in the following

\begin{pro}{v-N}
Let $\MMm$ be a von Neumann algebra acting on a Hilbert space $\HHh$ and let $\tTT =
(T_1,\ldots,T_k)$ be an $n$-homogeneous system of operators belonging to $\MMm$. Let $X =
\sigma_n(\tTT)$.
\begin{enumerate}[\upshape(a)]
\item For any $f \in \Bb C^*(X,.)$, $f(\tTT) \in \MMm$.
\item If $f^{(1)},f^{(2)},\ldots \in \Bb C^*(X,.)$ converge pointwise to $f\dd X \to M_n$ and
   $$\sup_{\substack{m\geqsl1\\x \in X}} \|f^{(m)}(x)\| < \infty,$$
   then $f \in \Bb C^*(X,.)$ and $\lim_{m\to\infty} (f^{(m)}(\tTT)) h = (f(\tTT)) h$ for each
   $h \in \HHh$.
\end{enumerate}
\end{pro}
\begin{proof}
We begin with (a). It is clear that $g(\tTT) \in \MMm$ for $g \in C^*(X,.)$. Let $E_{\tTT} =
[E_{pq}]$. Denote by $\scalarr$ the scalar product of $\HHh$ and fix $f = [f_{pq}] \in
\Bb C^*(X,.)$. We shall show that $f(\tTT)$ belongs to the closure of $\{g(\tTT)\dd\ g \in
C^*(X,.)\}$ in the weak operator topology of $\BBb(\HHh)$, which will give (a). To this end,
we fix $h_1,w_1,\ldots,h_r,w_r \in \HHh$ and $\epsi > 0$. Put $\mu = \sum_{s=1}^r \sum_{p,q}
|E_{pq}^{(h_s,w_s)}|$. By \LEM{dense}, there is $g = [g_{pq}] \in C^*(X,.)$ such that $\int_X
\|f(x) - g(x)\| \dint{\mu(x)} \leqsl \epsi$. But then, for each $s \in \{1,\ldots,r\}$,
\begin{multline*}
\left|\Bigl\langle\Bigl(\int f \dint{E_{\tTT}} - \int g \dint{E_{\tTT}}\Bigr)
h_s,w_s\Bigr\rangle\right| = \Bigl|\sum_{p,q} \int_X (f_{pq} - g_{pq})
\dint{E_{qp}^{(h_s,w_s)}}\Bigr|\\\leqsl \sum_{p,q} \int_X |f_{pq} - g_{pq}|
\dint{|E_{qp}^{(h_s,w_s)}|} \leqsl \int_X \|f(x) - g(x)\| \dint{\mu(x)} \leqsl \epsi
\end{multline*}
and we are done (since $f(\tTT) = \int f \dint{E_{\tTT}}$ and $g(\tTT) =
\int g \dint{E_{\tTT}}$).\par
We turn to (b). It is clear that $f \in \Bb C^*(X,.)$. Replacing $f^{(m)}$ by $f^{(m)} - f$,
we may assume $f = 0$. Observe that then $\lim_{m\to\infty} ((f^{(m)})^* f^{(m)})_{pq}(x) = 0$ for
any $x \in X$ and $p, q \in \{1,\ldots,n\}$ and the functions $((f^{(1)})^* f^{(1)})_{pq},
((f^{(2)})^* f^{(2)})_{pq},\ldots$ are uniformly bounded. Therefore (by Lebesgue's dominated
convergence theorem) for any $h \in H$,
\begin{multline*}
\|(f^{(m)}(\tTT)) h\|^2 = \scalar{(f^{(m)}(\tTT))^* (f^{(m)}(\tTT)) h}{h} =
\Bigl\langle\Bigl(\int (f^{(m)})^* f^{(m)} \dint{E_{\tTT}}\Bigr)h,h\Bigr\rangle\\
= \sum_{p,q} \int_X ((f^{(m)})^* f^{(m)})_{pq} \dint{E_{qp}^{(h,h)}} \to 0 \quad (m\to\infty),
\end{multline*}
which finishes the proof.
\end{proof}

We end the paper with the note that the above result enables defining the extended $n$-functional
calculus for $n$-homogeneous systems in $W^*$-algebras.

\end{document}